\documentclass[12pt,reqno]{amsart}
\usepackage[utf8]{inputenc}

\usepackage{graphicx}
\usepackage{xcolor}

\usepackage{calrsfs}

\usepackage[all]{xy}
\xyoption{2cell}
\UseAllTwocells

\usepackage[colorinlistoftodos]{todonotes}

\usepackage{tikz}
\usetikzlibrary{positioning,intersections,cd,matrix, arrows, backgrounds}
\usetikzlibrary{shapes,shapes.geometric,shapes.misc}
\usetikzlibrary{decorations}
\usetikzlibrary{decorations.pathreplacing}

\newcommand{\xyR}[1]{%
\xydef@\xymatrixrowsep@{#1}}

\newcommand{\xyC}[1]{%
\xydef@\xymatrixcolsep@{#1}}

\numberwithin{equation}{section}

\def\al{\alpha}
\def\be{\beta}

\def\la{\lambda}
\def\ro{\rho}
\def\si{\sigma}

\def\ta{\tau}
\def\up{\upsilon}

\def\ph{\phi}

\def\Ga{\Gamma}

\def\La{\Lambda}

\def\soc{\operatorname{soc}}
\def\Ker{\operatorname{Ker}}

\def\Im{\operatorname{Im}}

\def\Hom{\operatorname{Hom}}
\def\rad{\operatorname{rad}}

\def\Ext{\operatorname{Ext}}

\def\mod{\operatorname{mod}}
\def\Mod{\operatorname{Mod}}

\def\ind{\operatorname{ind}}

\def\prj{\operatorname{prj}}

\def\rank{\operatorname{rank}}

\def\calP{{\mathcal P}}

\def\calS{{\mathcal S}}

\newcommand\bbZ{\mathbb{Z}}
\newcommand{\sfQ}{\mathsf{Q}}

\def\op{^{\text{op}}}

\def\inv{^{-1}}

\def\iso{\cong}
\def\ds{\oplus}
\def\ox{\otimes}

\def\ovl{\overline}

\def\dsm#1,#2..#3{\bigoplus_{{#1}={#2}}^{#3}}
\def\sm#1,#2..#3{\sum_{{#1}={#2}}^{#3}}

\def\id{1\kern-.25em{\text{{\rm l}}}} 

\def\isoto{\ \raise.8ex\hbox{$^{\sim}$}\kern-.7em\hbox{$\to$}\ } 
\newcommand\Cdot{\raisebox{1pt}{\scalebox{0.4}{$\bullet$}}}
\def\down{_{\Cdot}}
\def\up{^{\Cdot}}
\def\ang#1{{\langle #1 \rangle}}
\def\ya#1{\xrightarrow{#1}}
\def\blank{\operatorname{-}}

\def\stAR#1{{}_{\text{\rm s}}\Ga_{#1}}

\def\bg{%
\family{cmr}\size{20}{12pt}\selectfont}

\def\bigzerou{%
\smash{\lower1.7ex\hbox{\bg 0}}}

\def\repr[#1;#2;#3;#4;#5]{
\left(
\begin{matrix}#1\\#2\end{matrix}
#3
\begin{matrix}#4\\#5\end{matrix}
\right)}
\def\bmat#1{\begin{bmatrix} #1 \end{bmatrix}}

\makeatother

\def\bfs{\mathbf{s}}
\def\bft{\mathbf{t}}

\def\adj{\xymatrix@1@C=10pt{\ar@{-|}[r]&}\,}

\renewcommand{\k}{\Bbbk}

\usepackage{amsmath, amsthm, amssymb, amsfonts, amscd, latexsym}

\usepackage{graphicx}
\usepackage{color}
\usepackage{enumerate}
\usepackage{multicol}
\usepackage{array}
\usepackage[all]{xy}
\usepackage{geometry}
\usepackage{enumerate}        

\def\al{\alpha}
\def\be{\beta}

\def\Ga{\Gamma}

\def\la{\lambda}
\def\La{\Lambda}
\def\ro{\rho}
\def\si{\sigma}

\def\ta{\tau}

\def\ph{\phi}

\def\cG{{\mathcal G}}

\def\ang#1{{\langle #1 \rangle}}

\def\dim{\operatorname{dim}}

\def\Ext{\operatorname{Ext}}

\def\HH{\operatorname{HH}}
\def\Hom{\operatorname{Hom}}

\def\id{\operatorname{id}}
\def\Im{\operatorname{Im}}

\def\Ker{\operatorname{Ker}}

\def\op{\mathrm{op}}

\def\rank{\operatorname{rank}}

\theoremstyle{plain} 
\newtheorem{thm}{Theorem}[section]

\newtheorem*{thm*}{Theorem}
\newtheorem*{cor*}{Corollary}
\newtheorem{lem}[thm]{Lemma}
\newtheorem{prp}[thm]{Proposition}

\theoremstyle{definition}
\newtheorem{dfn}[thm]{Definition}

\newtheorem{ntn}[thm]{Notation}
\newtheorem{rmk}[thm]{Remark}

\numberwithin{equation}{section}

\renewcommand{\r}{\mathbf{r}}
\newcommand{\Lae}{\La^{\mathrm{e}}}
\newcommand{\stAus}[1]{{}_{\mathrm{s}}\Ga_{{#1}}}

\newcommand{\inj}{\operatorname{inj}}

\title[Derived equivalence and weakly symmetric algebras]{On a derived equivalence classification of weakly symmetric algebras of Euclidean type 
}

\author{Hideto Asashiba, Ayako Itaba and Manuel Saor{\'i}n}

\begin{document}
\begin{abstract}
We consider a family $(\Lambda_\lambda)_{\la \in \k^\times}$ of weakly symmetric algebras of Euclidean type 
over an algebraically closed field $\k$, where $\k^\times:= \k\setminus \{0\}$, and for each $\lambda \in \k^\times$, $\Lambda_\lambda$ is socle equivalent to $\Lambda_1$, which is the trivial extension of a hereditary algebra of type $\tilde{A}$.
We compute their Hochschild cohomologies, which shows that $\Lambda_\lambda$
is not derived equivalent to $\Lambda_1$ if $\la \ne 1$.
This means that a derived equivalence classification of weakly symmetric algebras of Euclidean type given by Bocian--Holm--Skowro{\' n}ski is not complete.
\end{abstract}
\subjclass[2020]{Primary 16E35;
Secondary 16E40, 16G60, 16D50, 16G70}

\keywords{Weakly symmetric algebras, Euclidean type,
derived equivalence, Hochschild cohomology,
socle equivalence, trivial extensions}
\maketitle




\section{Introduction}

Throughout this paper, we fix an algebraically closed field $\k$,
and all vector spaces and algebras are considered over $\k$.
By an algebra, we always mean a basic, connected, finite-dimensional algebra.
We set $D:= \Hom_\k(\blank, \k)$ to be the usual $\k$-duality.
By $T(A)$, we denote the trivial extension algebra $A \ltimes D(A)$ of $A$
by the $A$-$A$-bimodule $D(A)$.
We denote by $\hat{A}$ the \emph{repetitive category} of $A$.

According to \cite{bocian2004derived}, a selfinjective algebra is said to be
\emph{of Euclidean type} if it is isomorphic to an algebra of the form
$\hat{B}/G$, where $B$ is a tilted algebra \emph{of Euclidean type}
(that is, an algebra tilted from the path algebra $\k Q$ of a quiver $Q$
with the underlying graph an extended Dynkin graph), and $G$ is an admissible  group of linear automorphisms of $\hat{B}$.
Since all weakly symmetric algebras are selfinjective (e.g.\ see \cite[page 391]{MR2894798}), we can consider weakly symmetric algebras of Euclidean type.
We cite the following from \cite[Theorem 2]{MR2130590}.

\begin{thm}
\label{thm:BS05}
Let $A$ be an algebra.
Then the following are equivalent:
\begin{enumerate}[{\rm (1)}]
\item 
$A$ is a weakly symmetric algebra of Euclidean type, and has a singular Cartan matrix.
\item 
$A$ is a symmetric algebra of Euclidean type, and has a singular Cartan matrix.
\item 
$A \iso T(B)$ for some representation-infinite tilted algebra $B$ of Euclidean type.
\end{enumerate}
\end{thm}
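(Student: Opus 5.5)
We prove the cycle of implications $(3)\Rightarrow(2)\Rightarrow(1)\Rightarrow(3)$. The implication $(2)\Rightarrow(1)$ is immediate, since every symmetric algebra is weakly symmetric.

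For $(3)\Rightarrow(2)$, let $A\iso T(B)$ with $B$ a representation-infinite tilted algebra of Euclidean type. A trivial extension algebra is always symmetric: the form $\langle (a,f),(b,g)\rangle = f(b)+g(a)$ is a nondegenerate symmetric associative bilinear form. Moreover $T(B)\iso \hat B/(\nu_{\hat B})$, with $\nu_{\hat B}$ the Nakayama automorphism of the repetitive category, and $(\nu_{\hat B})$ is admissible. Hence $A$ is symmetric of Euclidean type. For the Cartan matrix, write $C_B$ for the Cartan matrix of $B$; then $C_{T(B)} = C_B + C_B^{t}$. Since $B$ is tilted from $\k Q$ with $Q$ Euclidean, the Euler form of $B$ is equivalent (via the tilting isometry of Grothendieck groups) to that of $\k Q$. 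The symmetrized form $q_B(x)=x^{t}C_B^{-t}x$ is therefore positive semidefinite with a one-dimensional radical spanned by the image $h$ of the null root $\delta$. Now $C_B+C_B^{t}=C_B^{t}\,(C_B^{-t}+C_B^{-1})\,C_B$, so the vector $C_B^{-1}h$ lies in the kernel of $C_{T(B)}$. Thus $C_{T(B)}$ is singular.

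For $(1)\Rightarrow(3)$, let $A\iso\hat B/G$ be weakly symmetric of Euclidean type with singular Cartan matrix. By the structure theory of selfinjective algebras of Euclidean type (Skowroński), we may replace $B$ inside $\hat B$ so that $G$ is infinite cyclic generated by $\varphi = \sigma\nu_{\hat B}^{\,m}$ or $\sigma\rho$. Here $\sigma$ is a rigid automorphism and $\rho$ is a root of $\nu_{\hat B}$ (a "half-turn" when $Q$ has suitable symmetry). The vector $\dim$ of the Grothendieck group of $\hat B/G$ is then computed via the $G$-orbits. Weak symmetry forces $\nu_A$ to fix every simple module, which means $\nu_{\hat B}$ acts trivially on the objects of $\hat B/G$. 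This restricts $\varphi$ to $\sigma\nu_{\hat B}$ with $\sigma$ fixing objects, or to a root $\rho$ with $\rho^{k}=\nu_{\hat B}$ acting trivially on orbits. One then computes the Cartan matrix of $\hat B/(\varphi)$ in terms of $C_B$ and the Coxeter transformation $\Phi_B = -C_B^{t}C_B^{-1}$. For $\varphi$ a proper root of $\nu$, the Cartan matrix is a sum of the form $\sum_i \Phi_B^{\,i}$-twisted blocks. Its singularity is governed by eigenvalues of $\Phi_B$ of finite order other than the trivial one. For Euclidean type, the Coxeter transformation has eigenvalue $1$ only with the null vector, and its other root-of-unity eigenvalues come from tubes.

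The main obstacle is showing that singularity fails whenever $G$ is not generated by $\sigma\nu_{\hat B}$ with $\sigma$ rigid. In that case $\hat B/(\sigma\nu_{\hat B})$ is a socle deformation of $T(B)$, and singularity together with weak symmetry should force it to be exactly $T(B')$ for some tilted $B'$ of Euclidean type. I would first attempt this via the explicit determinant computation using the spectral decomposition of $\Phi_B$ for each extended Dynkin type. Then I would show that a nontrivial rigid twist $\sigma$ fixing all objects can be absorbed by replacing $B$ with an isomorphic tilted algebra, so that $A\iso T(B)$. Finally, representation-infiniteness of $B$ follows because the Cartan singularity excludes the domestic case where $B$ is representation-finite; in that case $\hat B$ would be locally representation-finite and the Euler form would be positive definite.
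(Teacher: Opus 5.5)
\textbf{The implication $(1)\Rightarrow(3)$ cannot be proved: the statement is false.}

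Your arguments for $(2)\Rightarrow(1)$ and $(3)\Rightarrow(2)$ are fine: trivial extensions are symmetric, $T(B)\iso\hat B/(\nu_{\hat B})$, and $C_{T(B)}=C_B+C_B^{t}$ is singular because the symmetrized Euler form of $B$ has a nonzero radical. The gap is in $(1)\Rightarrow(3)$, and it is not repairable. The statement is quoted from Bocian--Skowro\'nski, and the point of this paper is that it is false.

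The step that breaks is your claim that a rigid twist $\sigma$ fixing all objects ``can be absorbed by replacing $B$ with an isomorphic tilted algebra''. Equivalently, it is your hope that singularity plus weak symmetry force $\hat B/(\sigma\nu_{\hat B})\iso T(B')$. Take $B$ hereditary of type $\widetilde{A}_{2,3}$ as in the proof of Proposition \ref{prp:fundamental-prop}, and $\ph_\la$ the automorphism multiplying $\al_1$ by $\la$.
\begin{itemize}
\item Then $\La_\la\iso\hat B/\ang{\widehat{\ph}_\la\nu_B}\iso B\ltimes(DB)_{\ph_\la}$ is exactly of your form.
\item Since $\ph_\la$ fixes all vertices, $\La_\la$ is weakly symmetric. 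It has the same Cartan matrix $C_B+C_B^{t}$ as $T(B)$, hence singular, and it is of Euclidean type.
\item For $\la\neq1$ the twist is outer. A unit $u$ with $u^{-1}e_iu=e_i$ for all $i$ equals $\sum_i c_ie_i$, because $e_iBe_i=\k e_i$. Conjugation by it multiplies the parallel paths $\al_1\al_2\al_3$ and $\be_1\be_2$ from $1$ to $4$ by the same scalar $c_1^{-1}c_4$. By contrast, $\ph_\la$ multiplies them by $\la$ and $1$ respectively.
\item Accordingly $\La_\la$ is not symmetric (Proposition \ref{prp:symmetric-iff-la=1}). By your own argument for $(3)\Rightarrow(2)$, it is therefore not isomorphic to $T(B')$ for any $B'$.
\end{itemize}
So $(1)\Rightarrow(2)$ and $(1)\Rightarrow(3)$ both fail; only $(3)\Rightarrow(2)\Rightarrow(1)$ can be proved. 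Your strategy had to fail for a structural reason: twisting by vertex-fixing automorphisms never changes the Cartan matrix. Singularity therefore carries no information about the difference between weakly symmetric and symmetric here.

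\textbf{A secondary error in your last step.} Your deduction of representation-infiniteness of $B$ is also wrong. A tilted algebra of Euclidean type can be representation-finite, namely when the tilting module has both preprojective and preinjective summands. Since tilting is a derived equivalence, its Euler form is still isometric to that of $\k Q$. So $\hat B$ is not locally representation-finite, the symmetrized form is only semidefinite, and $C_{T(B)}$ is singular anyway. The Cartan matrix cannot detect representation-infiniteness of $B$; that would have to be arranged by the choice of $B$ inside $\hat B$.
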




By \cite[Theorem 1]{bocian2004derived} (see also \cite[Theorem 2.1]{holm2006derived} for the explicit representatives), we see that the following holds.

\begin{thm}
\label{thm:der-eq-cls-std-symm-rep-inf-dom}
The following gives a complete set of representatives of
derived equivalence classes of weakly symmetric algebras
of Euclidean type with singular Cartan matrices:
\begin{multline}\label{eq:ssd}
\calS:= \{
T(p,q), T(2,2,r)^*, T(3,3,3), T(2,4,4), T(2,3,6) \mid\\
1 \le p \le q, 
2 \le r\}.
\end{multline}
\end{thm}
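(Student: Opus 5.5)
The plan is to reduce the statement to the classification of trivial extensions of hereditary algebras of Euclidean type. I would then quote \cite[Theorem 1]{bocian2004derived} for the separation of the resulting classes.

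\emph{Reduction to trivial extensions.} Let $A$ be weakly symmetric of Euclidean type with singular Cartan matrix. By Theorem~\ref{thm:BS05}, $(1)\Rightarrow(3)$, we have $A\iso T(B)$ for a representation-infinite tilted algebra $B$ of Euclidean type. Say $B=\End_{\k Q}(M)$ for a tilting module $M$ over $\k Q$, where $Q$ has extended Dynkin underlying graph. Then $B$ and $\k Q$ are derived equivalent. By Rickard's theorem that derived equivalent algebras have derived equivalent trivial extensions, $A\iso T(B)$ is derived equivalent to $T(\k Q)$. Hence every algebra in the class is derived equivalent to $T(\k Q)$ for some Euclidean quiver $Q$.

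\emph{Coverage by $\calS$.} Next I would normalize $Q$ up to derived equivalence of $T(\k Q)$, using reflections at sinks and sources (APR tilts).
\begin{itemize}
\item For types $\tilde D$ and $\tilde E$, all orientations are reflection equivalent. This gives one class per graph, namely $T(2,2,r)^*$, $T(3,3,3)$, $T(2,4,4)$ and $T(2,3,6)$, matching the explicit representatives of \cite[Theorem 2.1]{holm2006derived}.
\item For type $\tilde A$, the reflection-invariant is the unordered pair $(p,q)$, where $p$ and $q$ count the arrows of the cycle oriented each way. This gives the algebras $T(p,q)$ with $1\le p\le q$.
\end{itemize}
Conversely, each member of $\calS$ is the trivial extension of a representation-infinite tilted algebra of Euclidean type. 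So by $(3)\Rightarrow(1)$ of Theorem~\ref{thm:BS05} it lies in the class.

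\emph{Pairwise non-equivalence.} Distinct members of $\calS$ should be separated by derived invariants:
\begin{itemize}
\item the number of simple modules, equal to $p+q$ and $r+3$, respectively;
\item the stable Auslander--Reiten quiver, which is preserved under stable (hence derived) equivalence of selfinjective algebras. Its tube ranks recover the Euclidean type $(p,q)$, $(2,2,r)$, $(3,3,3)$, $(2,4,4)$, $(2,3,6)$.
\end{itemize}
This is the content of \cite[Theorem 1]{bocian2004derived}, which I would cite here.

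\emph{Main obstacle.} I expect the hard part to be the first reduction, since everything rests on $(1)\Rightarrow(3)$ of Theorem~\ref{thm:BS05}. That implication requires every weakly symmetric algebra of Euclidean type with singular Cartan matrix to be a genuine trivial extension. A nonstandard or socle-deformed algebra might have the same Cartan matrix and the same stable AR-quiver while not being isomorphic to any $T(B)$, and the argument above would not see such an algebra. So at this step I would check carefully that the hypotheses really force standardness, and that a socle deformation of $T(B)$ is again isomorphic to a trivial extension. This is exactly the point the family $(\La_\la)$ in the abstract is designed to test: if some socle-equivalent deformation of $T(p,q)$ is not derived equivalent to it, then the reduction, and with it the completeness of $\calS$, fails. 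In that case the argument above proves only the weaker statement in which "weakly symmetric" is replaced by "trivial extension of a tilted algebra".
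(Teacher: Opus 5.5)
\textbf{Verdict: the proposal has a genuine gap, and the gap cannot be closed, because the statement is false.} The paper's purpose is to refute it, and it fails at exactly the step you flagged as the main obstacle.

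\textbf{Where the argument breaks.} Your coverage step depends on the implication $(1)\Rightarrow(3)$ of Theorem~\ref{thm:BS05}, and that implication is wrong. Take $\la\neq 1$.
\begin{itemize}
\item By Proposition~\ref{prp:fundamental-prop}, $\La_\la$ is weakly symmetric of Euclidean type with singular Cartan matrix.
\item Indeed $\La_\la\iso \hat B/\ang{\widehat{\ph}_\la\nu_B}$ with $B$ hereditary of type $\widetilde A_{2,3}$. So $\La_\la$ is standard, and standardness is not what goes wrong.
\item Also $\La_\la\iso B\ltimes (DB)_{\ph_\la}$, and this algebra is not symmetric (Proposition~\ref{prp:symmetric-iff-la=1}). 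The reason is that $\ph_\la$ fixes every vertex but is not an inner automorphism.
\item Every trivial extension $T(B')$ is symmetric. Hence $\La_\la$ is not isomorphic to any $T(B')$.
\end{itemize}
You proposed to check that a socle deformation of $T(B)$ is again a trivial extension. For this twisted deformation of $T(B)\iso\La_1$, that check comes out negative.

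\textbf{The conclusion itself fails.} No other route can rescue the statement.
\begin{itemize}
\item By Proposition~\ref{prp:st-AR-same}, $\stAus{\La_\la}\iso\stAus{\La_1}$, where $\La_1\iso T(2,3)$.
\item Derived equivalent selfinjective algebras are stably equivalent. The stable AR-quiver separates the members of $\calS$, which is the same invariant you use for separation. So $T(2,3)$ is the only member of $\calS$ to which $\La_\la$ could be derived equivalent.
\item Theorem~\ref{thm:dim-HH} gives $\dim_\k\HH^2(\La_\la)=1\neq 2=\dim_\k\HH^2(\La_1)$. Since Hochschild cohomology is a derived invariant, this rules out $T(2,3)$ as well.
\end{itemize}
Therefore $\La_\la$ belongs to the class but is derived equivalent to no element of $\calS$, so $\calS$ is not a complete set of representatives.

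\textbf{What survives.} The rest of your argument is sound:
\begin{itemize}
\item the members of $\calS$ lie in the class and are pairwise non-derived-equivalent;
\item Rickard's theorem for trivial extensions, combined with normalization by reflections, shows that every $T(B)$ with $B$ a representation-infinite tilted algebra of Euclidean type is derived equivalent to a member of $\calS$.
\end{itemize}
That is the weaker statement you fall back on at the end. It is the most your method can give, and the paper shows it cannot be extended to all weakly symmetric algebras of Euclidean type with singular Cartan matrix.
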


\begin{rmk}
An algebra is said to be \emph{standard} if it has a simply connected Galois covering.
The following follows from the main theorem of \cite{skowronski1989selfinjective}:

``A selfinjective algebra $A$ is of Euclidean type 
if and only if $A$ is standard and representation-infinite domestic.''

This statement can be  used to understand the relationship between Theorem \ref{thm:der-eq-cls-std-symm-rep-inf-dom},
\cite[Theorem 1]{bocian2004derived} and
\cite[Theorem 2.1]{holm2006derived}.
\end{rmk}


Now consider the algebras defined as follows.
These are originally constructed as {\em twisted multifold extensions} introduced in \cite{asashiba2002derived}.

\begin{dfn}
\label{dfn:La_la}
Let $\la \in \k^\times:= \k \setminus \{0\}$, and define an algebra $\La_\la$ by the quiver
$\sfQ$
\begin{equation}
\label{eq:quiver}
\begin{tikzcd}[column sep=20pt]
	& 2 && 3 & \\
	1 &&&& 4 \\
	&& 5
	\arrow["{\alpha_2}", from=1-2, to=1-4]
	\arrow["{\alpha_3}", from=1-4, to=2-5]
	\arrow["{\alpha_1}", from=2-1, to=1-2]
	\arrow["{\beta_1}"', from=2-1, to=3-3]
	\arrow["{\alpha_4}"', shift right=1.5, from=2-5, to=2-1]
	\arrow["{\beta_3}", shift left=0.5, from=2-5, to=2-1]
	\arrow["\beta_2"', from=3-3, to=2-5]
\end{tikzcd}
%
\end{equation}
with relations
$$
\begin{aligned}
&\al_4\be_1 = 0,\, \be_3\al_1 = 0,\, \al_3\be_3 = 0,\, \be_2 \al_4 = 0,\,
\al_2\al_3\al_4\al_1\al_2 = 0;\\
&\al_1\al_2\al_3\al_4 = \be_1\be_2\be_3,\,
\al_4\al_1\al_2\al_3 = \la \be_3\be_1\be_2.
\end{aligned}
$$
We set $I_\la$ to be the ideal of $\k\sfQ$ defined by the relations above.
The structures of indecomposable projective right $\La_\la$-modules are
given as follows:
\[
e_1\La_\la = 
\begin{tikzcd}[column sep=-10pt]
	& {e_1} & \\
	{\al_1} && {\be_1} \\
	{\al_1\al_2} && {\be_1\be_2} \\
	{\al_1\al_2\al_3} && {\be_1\be_2\be_3} \\
	& {\al_1\al_2\al_3\al_4}
	\arrow[from=1-2, to=2-1]
	\arrow[from=1-2, to=2-3]
	\arrow[from=2-1, to=3-1]
	\arrow[from=2-3, to=3-3]
	\arrow[from=3-1, to=4-1]
	\arrow[from=3-3, to=4-3]
	\arrow[from=4-1, to=5-2]
	\arrow[equals, from=4-3, to=5-2]  
\end{tikzcd}
\quad\quad e_2\La_\la = 
\begin{tikzcd}
	{e_2} \\
	{\alpha_2} \\
	{\alpha_2\alpha_3} \\
	{\alpha_2\alpha_3\alpha_4} \\
	{\alpha_2\alpha_3\alpha_4\alpha_1}
	\arrow[from=1-1, to=2-1]
	\arrow[from=2-1, to=3-1]
	\arrow[from=3-1, to=4-1]
	\arrow[from=4-1, to=5-1]
\end{tikzcd}
\quad\quad e_3\La_\la = 
\begin{tikzcd}
	{e_3} \\
	{\alpha_3} \\
	{\alpha_3\alpha_4} \\
	{\alpha_3\alpha_4\alpha_1} \\
	{\alpha_3\alpha_4\alpha_1\alpha_2}
	\arrow[from=1-1, to=2-1]
	\arrow[from=2-1, to=3-1]
	\arrow[from=3-1, to=4-1]
	\arrow[from=4-1, to=5-1]
\end{tikzcd}
\]

\[
e_4\La_\la = 
\begin{tikzcd}[column sep=-10pt]
	& {e_4} & \\
	{\al_4} && {\be_3} \\
	{\al_4\al_1} && {\be_3\be_1} \\
	{\al_4\al_1\al_2} && {\lambda\be_3\be_1\be_2} \\
	& {\al_4\al_1\al_2\al_3}
	\arrow[from=1-2, to=2-1]
	\arrow[from=1-2, to=2-3]
	\arrow[from=2-1, to=3-1]
	\arrow[from=2-3, to=3-3]
	\arrow[from=3-1, to=4-1]
	\arrow[from=3-3, to=4-3, "\la\inv"]
	\arrow[from=4-1, to=5-2]
	\arrow[equals, from=4-3, to=5-2] 
\end{tikzcd}
\quad\quad e_5\La_\la = 
\begin{tikzcd}
	{e_5} \\
	{\beta_2} \\
	{\beta_2\beta_3} \\
	{\beta_2\beta_3\beta_1}
	\arrow[from=1-1, to=2-1]
	\arrow[from=2-1, to=3-1]
	\arrow[from=3-1, to=4-1]
\end{tikzcd}
\]
\end{dfn}

We denote by $\stAR{\La}$ the stable Auslander--Reiten quiver
(stable AR-quiver for short) of an algebra $\La$.
In section 2,  we will prove the following.

\begin{prp}[Proposition \ref{prp:fundamental-prop}]
\label{prp:prop-La-la}
For each $\la \in \k^\times$, $\La_\la$ is a  weakly symmetric algebra of Euclidean type with a singular Cartan matrix.
\end{prp}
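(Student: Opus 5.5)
The plan is to verify the three properties in the statement separately: weak symmetry, singularity of the Cartan matrix, and Euclidean type. The last one is the only real obstacle. Throughout I will use the explicit structure of the indecomposable projectives $e_i\La_\la$ listed in Definition \ref{dfn:La_la}, after first checking that these pictures are correct. That check amounts to verifying that the listed paths give a $\k$-basis of $\k\sfQ/I_\la$: all longer paths vanish by the relations, and the listed ones are linearly independent.

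\emph{Weak symmetry.} For each vertex $i$, I will read off from the pictures that $e_i\La_\la$ has simple socle and that this socle is $S_i$. For example, the socle of $e_1\La_\la$ is spanned by $\al_1\al_2\al_3\al_4=\be_1\be_2\be_3$, which ends at $1$. The socle of $e_4\La_\la$ is spanned by $\al_4\al_1\al_2\al_3=\la\be_3\be_1\be_2$, which ends at $4$. The socles of $e_2\La_\la$, $e_3\La_\la$ and $e_5\La_\la$ are spanned by the cycles $\al_2\al_3\al_4\al_1$, $\al_3\al_4\al_1\al_2$ and $\be_2\be_3\be_1$, which end at $2$, $3$ and $5$. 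Hence $\soc(e_i\La_\la)\iso \operatorname{top}(e_i\La_\la)$ for all $i$, so $\La_\la$ is selfinjective and weakly symmetric.

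\emph{Cartan matrix.} The composition factors of each $e_i\La_\la$ are read off from the same bases. None of these multiplicities depends on $\la$, so the Cartan matrix $C_{\La_\la}$ equals $C_{\La_1}$. I will then identify $\La_1$ with $T(B)$, where $B=\k Q'$ and $Q'$ is the subquiver of $\sfQ$ obtained by deleting $\al_4$ and $\be_3$. The quiver $Q'$ has two paths $1\to2\to3\to4$ and $1\to5\to4$, so its underlying graph is of type $\tilde A_4$ and $B$ is hereditary, representation-infinite, of Euclidean type. The arrows $\al_4,\be_3\colon 4\to1$ then play the role of the dual basis elements of $D(B)$ dual to the two maximal paths. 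The relations of $\La_1$ are exactly the standard relations of $T(B)$: the products $\al_4\be_1$, $\be_3\al_1$, $\al_3\be_3$, $\be_2\al_4$ vanish, the two socle elements at $1$ and at $4$ are identified, and there is a length bound. I will check this against the known presentation of trivial extensions of hereditary algebras. Then Theorem \ref{thm:BS05} (3)$\Rightarrow$(1) applied to $T(B)$ shows that $C_{\La_1}$, and hence $C_{\La_\la}$, is singular. As a fallback, a direct determinant computation is routine: the columns of $e_1$ and $e_4$ coincide.

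\emph{Euclidean type (main step).} I would realize $\La_\la$ as an orbit algebra $\hat B/\ang{\rho\nu_{\hat B}}$. Here $\nu_{\hat B}$ is the Nakayama automorphism of the repetitive category. The automorphism $\rho$ of $\hat B$ is induced by the automorphism of $B$ that rescales a single arrow, say $\be_2\mapsto\la\be_2$ (or $\be_1$), and fixes all other arrows; this is the twisted multifold extension construction of \cite{asashiba2002derived} in the $1$-fold case. Concretely, I would define a functor $\hat B\to\La_\la$ on the quiver of $\hat B$. It sends the copies $B_m$ of $B$ to $\La_\la$ via the identity on vertices and arrows, twisted by $\la^m$ on the chosen arrow. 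It sends the connecting arrows (the copies of $\al_4,\be_3$) to $\al_4$ and $\be_3$. I will check that the mesh-type relations of $\hat B$, namely the equality of the two socle paths in each copy, map to the relations of $\La_\la$. The $\la$ appearing in $\al_4\al_1\al_2\al_3=\la\be_3\be_1\be_2$ is exactly what this twist produces. I will also check that the functor is a Galois covering with group $G=\ang{\rho\nu_{\hat B}}\iso\bbZ$. This group acts freely on objects, so it is admissible, and $\La_\la\iso\hat B/G$ is of Euclidean type by definition.

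The delicate point is bookkeeping the scalar: we need the powers $\la^m$ to be consistent on all relations of $\hat B$ simultaneously, so that the functor is well defined and $G$-invariant. If this becomes messy, I would instead invoke the remark after Theorem \ref{thm:der-eq-cls-std-symm-rep-inf-dom}. By that remark it suffices to show that $\La_\la$ is standard and representation-infinite domestic. Standardness would come from the same covering, and domesticity from the stable AR-quiver, which should be shared with $T(B)$ since $\La_\la$ and $\La_1$ are socle equivalent: $\La_\la/\soc\La_\la\iso\La_1/\soc\La_1$.
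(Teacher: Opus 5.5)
Your plan follows the paper's proof in all three parts. Weak symmetry is read off from the socles. Singularity holds because rows (equivalently columns) $1$ and $4$ of the Cartan matrix coincide; the paper uses only this observation, so the detour through Theorem~\ref{thm:BS05} is unnecessary. Euclidean type comes from realizing $\La_\la$ as the twisted $1$-fold extension $\hat B/\ang{\hat\rho\,\nu_B}$ of the hereditary algebra $B=\k Q'$ of type $\tilde A_{2,3}$. The group is admissible because its generator sends $x^{[m]}$ to $x^{[m+1]}$. The paper twists $\al_1\mapsto\la\al_1$ rather than $\be_2$. The rescaling $\be_2\mapsto\la^{-1}\be_2$ and the paper's $\ph_\la$ differ by conjugation with $e_1+\la e_2+\la e_3+\la e_4+e_5$, so only the sign of the exponent of $\la$ is at stake.

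The one step that fails as written is your explicit covering functor. Take $\al_4^{[m]},\be_3^{[m]}\colon 4^{[m]}\to 1^{[m+1]}$, and suppose $\be_2^{[m]}\mapsto\la^m\be_2$ while the connecting arrows go to $\al_4,\be_3$ untwisted. Then the relation $\al_1^{[m]}\al_2^{[m]}\al_3^{[m]}\al_4^{[m]}=\be_1^{[m]}\be_2^{[m]}\be_3^{[m]}$ of $\hat B$ is sent to $\al_1\al_2\al_3\al_4=\la^m\be_1\be_2\be_3$. This is false in $\La_\la$ whenever $\la^m\ne 1$, and the same happens if the twisted arrow is $\al_1$ or $\be_1$.

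The fix is to give the connecting arrow the compensating scalar, $\be_3^{[m]}\mapsto\la^{-m}\be_3$. Then the first relation is preserved. The second relation $\al_4^{[m]}\al_1^{[m+1]}\al_2^{[m+1]}\al_3^{[m+1]}=\be_3^{[m]}\be_1^{[m+1]}\be_2^{[m+1]}$ goes to $\al_4\al_1\al_2\al_3=\la^{-m}\la^{m+1}\be_3\be_1\be_2$, which is exactly the defining relation of $\La_\la$. This reflects that $\hat\rho$ acts on the $D(B)$-part by the inverse scalar: $\hat\rho(\be_3^{[m]})=\la\be_3^{[m]}$ when $\rho(\be_2)=\la^{-1}\be_2$. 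With this $\rho$, the corrected functor is $\hat\rho\nu_B$-invariant. Your choice $\be_2\mapsto\la\be_2$ would produce $\La_{\la^{-1}}$ instead, which is harmless since $\la$ ranges over $\k^\times$.

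Finally, your fallback via standardness and domesticity does not bypass this step, since standardness would again have to come from the same covering.
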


Hence by Theorem \ref{thm:der-eq-cls-std-symm-rep-inf-dom},
for each $\la \in k^\times$,
$\La_\la$ should be derived equivalent to a unique algebra $A_\la \in \calS$
in \eqref{eq:ssd}.
Then $A_\la$ and $\La_\la$ are stably equivalent, and hence
$$
\stAR{A_\la} \iso \stAR{\La_\la}
$$
as valued translation quivers.
In Proposition \ref{prp:st-AR-same}, we will show that
$\stAus{\La_\la} \iso \stAus{\La_1}$ for all $\la \in \k^\times$.
Then we have
$$
\stAus{A_\la} \iso \stAus{\La_1}.
$$
Here, note that both $A_\la$ and $T(2,3)\ (\iso \La_1)$ are in $\calS$.
Since for any algebras $A, B \in \calS$, $\stAus{A} \iso \stAus{B}$
if and only if $A = B$,
we see that $A_\la = T(2,3) \iso \La_1$.
Hence $\La_\la$ should be derived equivalent to $\La_1$ for all $\la \in \k^\times$.

In this paper, we compute the Hochschild cohomology of $\La_\la$ to check
this statement, and we obtain the following.

\begin{thm*}[Theorem \ref{thm:dim-HH}]
For each $n = 0, 1, 2$,
the $\k$-dimension of $n$-th Hochschild cohomology group
of $\La_\la$ is given as follows .
$$
\begin{aligned}
\dim_\k \HH^0(\La_\la) &= 6.\\
\dim_\k \HH^1(\La_\la) &= 2.\\
\dim_\k \HH^2(\La_\la) &=
\begin{cases}
1 & \text{ if } \la \ne 1,\\
2 & \text{ if } \la = 1.
\end{cases}
\end{aligned}
$$
\end{thm*}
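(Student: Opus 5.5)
We compute $\HH^n(\La_\la)$ for $n\le 2$ from the beginning of a minimal projective bimodule resolution of $\La:=\La_\la = \k\sfQ/I_\la$ over $\Lae$. For $\HH^0$, $\HH^1$, $\HH^2$ only the first three terms are needed:
\[
P_2 \ya{\partial_2} P_1 \ya{\partial_1} P_0 \ya{\mu} \La \to 0 ,
\]
with
\[
P_0=\bigoplus_{i} \La e_i\ox e_i\La,\qquad P_1=\bigoplus_{a}\La e_{s(a)}\ox e_{t(a)}\La ,
\]
where $a$ runs over the arrows $\al_1,\dots,\al_4,\be_1,\be_2,\be_3$, and
\[
P_2=\bigoplus_{\rho}\La e_{s(\rho)}\ox e_{t(\rho)}\La ,
\]
where $\rho$ runs over a minimal set of relations generating $I_\la$. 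The maps are the standard ones: $\partial_1(e_{s(a)}\ox e_{t(a)})=a\ox e_{t(a)}-e_{s(a)}\ox a$, and $\partial_2$ is given by the formal derivative of each relation.

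The first step is to pin down a minimal relation set. From the projective structures in Definition \ref{dfn:La_la}, we keep the four zero relations $\al_4\be_1,\be_3\al_1,\al_3\be_3,\be_2\al_4$ and the two commutativity relations. We then check whether the long zero relation $\al_2\al_3\al_4\al_1\al_2$ is already a consequence of the others. Using $\al_4\al_1\al_2\al_3=\la\be_3\be_1\be_2$ gives
\[
\al_2\al_3\al_4\al_1\al_2\al_3=\la\,\al_2\al_3\be_3\be_1\be_2=0,
\]
but the length-5 path $\al_2\al_3\al_4\al_1\al_2$ itself seems to need its own relation. So I expect seven minimal relations. I also count parallel paths to fix $\dim_\k\La$; from the pictures, $\dim_\k\La=8+5+5+8+4=30$.

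Applying $\Hom_{\Lae}(-,\La)$ identifies
\[
\Hom_{\Lae}(P_0,\La)=\bigoplus_i e_i\La e_i,\qquad \Hom_{\Lae}(P_1,\La)=\bigoplus_a e_{s(a)}\La e_{t(a)},\qquad \Hom_{\Lae}(P_2,\La)=\bigoplus_\rho e_{s(\rho)}\La e_{t(\rho)} .
\]
With these identifications, $d^0(z)=(az-za)_a$. The map $d^1$ sends $(f_a)_a$ to, for each relation $\rho=\sum c\,a_1\cdots a_m$, the element $\sum c\sum_j a_1\cdots f_{a_j}\cdots a_m$.

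For $\HH^0$ the center is spanned by $1$, by the five socle elements (weak symmetry puts each $\soc e_i\La$ inside $e_i\La e_i$), and possibly by further loops at $1$ or $4$. The candidates are $\al_1\al_2\al_3\al_4$ and $\al_4\al_1\al_2\al_3$, but these are exactly the socle elements at $1$ and $4$. The non-socle cycles, such as $\al_2\al_3\al_4\al_1$ at $2$ in degree $4$, are themselves socle elements. So I must check which of the socle elements are central, and whether elements like $e_1\La e_1\ni\be_1\be_2\be_3$ or degree-3 cycles exist in $e_i\La e_i$. A direct check should give a center of dimension $6$. Since $\dim\HH^0=\dim Z(\La)$, the rank of $d^0$ is then $\dim\bigoplus_i e_i\La e_i-6$.

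For $\HH^1$ and $\HH^2$, I compute $\dim\Hom(P_1,\La)$ by listing the parallel paths for each arrow, and likewise $\dim\Hom(P_2,\La)$. Then
\[
\dim\HH^1=\dim\Ker d^1-\rank d^0,\qquad \dim\HH^2=\dim\Ker d^2-\rank d^1 .
\]
To avoid computing $d^2$, I use the $\Hom(\Omega^2_{\Lae}\La,\La)$ description: $\HH^2$ is $\Hom_{\Lae}(P_2,\La)$ modulo the image of $d^1$, restricted to the cocycles. It is simpler to take $\Ker d^2$ via Happel's/Strametz' approach, namely the space of maps $I/(I\r+\r I)\to\La$ that extend to $\La$-bimodule maps $I\to\La$. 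Equivalently, $\HH^2$ classifies first-order deformations; varying $\la$ gives one class, $\partial_\la$, which should be nonzero. I exploit the grading: $\La_\la$ is graded by path length, since all relations are homogeneous, and this splits each complex into homogeneous pieces that are small linear algebra problems. This also hands over the Euler characteristic of low pieces.

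The main obstacle is the $\la$-dependence of $\rank d^1$. In the degree where $d^1$ meets the two commutativity relations, the scalar $\la$ enters through $\al_4\al_1\al_2\al_3-\la\be_3\be_1\be_2$. A derivation $f$ scaling the $\al$'s by $a_i$ and the $\be$'s by $b_j$ contributes
\[
(a_1+a_2+a_3+a_4-b_1-b_2-b_3)\quad\text{and}\quad (a_1+a_2+a_3+a_4-b_1-b_2-b_3)\cdot(\text{socle of } e_4),
\]
so it is harmless. The genuine dependence appears when $f$ maps an arrow to a longer parallel path. Examples are $\al_4\mapsto\be_3$-type terms (there are two arrows $4\to1$) and $\be_3\mapsto\al_4$. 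The image of such an $f$ on the two commutativity relations is $(\be_1\be_2\al_4,\ \be_3\al_1\al_2\al_3\ldots)$, and on the second relation it picks up a coefficient $1-\la$ or $\la^{-1}-1$ relative to the first. I would write out the degree-$0$ graded piece (arrows to parallel arrows: $\al_4\leftrightarrow\be_3$) and show that
\[
d^1(\al_4\mapsto\be_3,\ \be_3\mapsto\al_4)
\]
vanishes only when $\la=1$. This makes $\dim\Ker d^1$, or the cokernel in degree $2$, jump by one at $\la=1$, consistent with $\dim\HH^1=2$ independently of $\la$ once the outer derivations are counted. In the end I expect $\dim\HH^2$ to be $1$ for $\la\ne1$ (spanned by the deformation in $\la$) and $2$ for $\la=1$ (an extra class supported on the socle components of the two commutativity relations, which becomes a coboundary when $\la\ne1$).
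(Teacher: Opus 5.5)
\textbf{There is a genuine gap.} Your framework (a minimal bimodule resolution with seven relations, then $\Hom_{\Lae}(-,\La)$ and rank counting) is the paper's. Your description of the centre ($1$ plus the five socle elements) correctly gives $\dim\HH^0=6$. However, you never actually compute $\HH^1$ or $\HH^2$, and the mechanism you propose for the jump at $\la=1$ is wrong.

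First, $\La_\la$ is not graded by path length: $\al_1\al_2\al_3\al_4-\be_1\be_2\be_3$ and $\al_4\al_1\al_2\al_3-\la\be_3\be_1\be_2$ mix lengths $4$ and $3$.

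More seriously, $\rank d^1$ does not depend on $\la$. The cochain $\al_4\mapsto\be_3$, $\be_3\mapsto\al_4$ is zero on both commutativity relations, because $\al_3\be_3=\be_3\al_1=\be_2\al_4=\al_4\be_1=0$. On the zero relations $\al_4\be_1$, $\be_2\al_4$, $\al_3\be_3$, $\be_3\al_1$ it gives $\be_3\be_1$, $\be_2\be_3$, $\al_3\al_4$, $\al_4\al_1$, which are nonzero for every $\la$. So $\Ker d^1$ consists only of the diagonal cochains, subject to the single condition $\sum_i c_{\al_i}=\sum_j c_{\be_j}$. The $\la$ cancels there because $\la\be_3\be_1\be_2=\al_4\al_1\al_2\al_3$. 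Hence $\dim\Ker d^1=6$, $\rank d^1=3$, and $\dim\HH^1=6-\rank d^0=6-4=2$ for all $\la$. Your text is also internally inconsistent at this point: if $\dim\Ker d^1$ jumped while $\rank d^0$ stayed fixed, $\HH^1$ would jump too.

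The $\la$-dependence actually sits in the cocycle condition, i.e.\ in $d^2\colon\Hom(P_2,\La)\to\Hom(P_3,\La)$. Computing $d^2$ needs the relations among the seven relations, which is exactly the step you try to bypass. The description via $\Hom_{\Lae}(I/I^2,\La)$ does not bypass it either: deciding which assignments on generators extend to bimodule maps is the same computation.

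For instance, let $g_1,g_5$ be the two commutativity relations, $g_4=\al_4\be_1$ and $g_7=\be_2\al_4$. Then in $\k\sfQ$ one has $\la g_1\be_1\be_2-\be_1\be_2g_5=\la\al_1\al_2\al_3g_4\be_2-\be_1g_7\al_1\al_2\al_3$. Take the cochain $\phi$ with $\phi(g_1)=e_1$, $\phi(\al_2\al_3\al_4\al_1\al_2)=\al_2$, $\phi(g_5)=e_4$, and zero elsewhere. This relation yields $(\la-1)\be_1\be_2$, and one checks that all other relations among relations are respected when $\la=1$. So $\phi$ is the extra class at $\la=1$.

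This extra class has \emph{top} components, not socle components. For $\la\ne1$ it fails to be a cocycle; it does not become a coboundary. The socle-valued cochains on $g_1$ and $g_5$ are cocycles for every $\la$, and their sum is a coboundary. That leaves the single class present for all $\la$, which is indeed the $\la$-deformation class, as you guessed. Without this third-syzygy step you cannot obtain $\rank d^2=5$ for $\la\ne1$ and $\rank d^2=4$ for $\la=1$, and hence not $\dim\HH^2$.

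A minor point: $\dim_\k\La=28$, not $30$, because $\be_1\be_2\be_3$ and $\la\be_3\be_1\be_2$ equal the socle paths $\al_1\al_2\al_3\al_4$ and $\al_4\al_1\al_2\al_3$, which you already counted.
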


Since the Hochschild cohomology groups are derived invariant
(\cite{rickard1991derived}), we obtain the following.

\begin{cor*}[Theorem \ref{thm:not-der-eq}]
Let $\la \in \k^\times$.
If $\la \ne 1$, 
then $\La_{\la}$ is not derived equivalent to $\La_{1}$. 
\end{cor*}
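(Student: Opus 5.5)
The corollary will follow formally from Theorem \ref{thm:dim-HH} together with Rickard's theorem, so the plan is mostly about how to prove Theorem \ref{thm:dim-HH}. Suppose $\La_\la$ and $\La_1$ are derived equivalent. By \cite{rickard1991derived}, a derived equivalence between two finite-dimensional algebras yields isomorphisms $\HH^n(\La_\la)\iso \HH^n(\La_1)$ for all $n\ge 0$ (in fact of graded algebras). For $\la\neq 1$ we then get $\dim_\k \HH^2(\La_\la)=1\neq 2=\dim_\k\HH^2(\La_1)$, a contradiction. So the real work is computing $\HH^2$ for both values of $\la$.

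For the Hochschild cohomology in degrees $0,1,2$ I would use the beginning of a minimal projective $\Lae$-bimodule resolution of $\La=\La_\la=\k\sfQ/I_\la$ (Bardzell / Happel style):
\[
\bigoplus_{\rho\in R}\La e_{s(\rho)}\ox e_{t(\rho)}\La \to \bigoplus_{a\in \sfQ_1}\La e_{s(a)}\ox e_{t(a)}\La \to \bigoplus_{i\in \sfQ_0}\La e_i\ox e_i\La \to \La \to 0 .
\]
Here $R$ is a minimal set of relations: the four zero relations of length two, the length-five monomial $\al_2\al_3\al_4\al_1\al_2$, and the two commutativity-type relations. To reach degree~2 one also needs the third term. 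I would obtain it by computing the kernel of the second map, i.e.\ the minimal relations among relations (the overlaps, such as $\al_4\be_1$ with $\be_1\be_2\be_3$, and so on). Applying $\Hom_{\Lae}(-,\La)$ turns the complex into
\[
\bigoplus_i e_i\La e_i \to \bigoplus_a e_{s(a)}\La e_{t(a)} \to \bigoplus_\rho e_{s(\rho)}\La e_{t(\rho)} \to \cdots .
\]
All these spaces are small and can be read off from the explicit projective structures displayed in Definition \ref{dfn:La_la}, using the path bases.

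The key steps, in order, are as follows.
\begin{enumerate}
\item Compute the center. It is spanned by $1$ and the socle elements $\al_1\al_2\al_3\al_4$, $\al_2\al_3\al_4\al_1$, $\al_3\al_4\al_1\al_2$, $\al_4\al_1\al_2\al_3$, $\be_2\be_3\be_1$, which gives dimension $6$.
\item Compute derivations modulo inner derivations to get $\HH^1$.
\item Compute the cocycles on relations modulo coboundaries to get $\HH^2$.
\end{enumerate}

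The main obstacle is the degree-2 computation, because this is where $\la$ must enter. The parallel-path spaces $e_{s(\rho)}\La e_{t(\rho)}$ for the two commutativity relations contain the socle-type elements. A cochain can deform the relation $\al_4\al_1\al_2\al_3=\la\be_3\be_1\be_2$ by rescaling $\la$. I expect this to be a genuine class exactly when $\la=1$: at $\la=1$, rescaling the arrows can no longer absorb the deformation, and the coboundary condition through the third term of the resolution degenerates. Equivalently, the rank of some small matrix depends on whether $\la-1=0$. To pin this down, I would first enumerate the relation-overlap syzygies and write the differential $d^2$ on cochains. Then I would carefully track the $\la$ coefficients coming from $\be_3\be_1\be_2$, and compute ranks separately for $\la=1$ and $\la\ne1$.
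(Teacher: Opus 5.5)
Your argument is the paper's: Rickard's derived invariance of Hochschild cohomology together with $\dim_\k\HH^2(\La_\la)=1\ne 2=\dim_\k\HH^2(\La_1)$. The paper gets that count from a minimal bimodule resolution through degree $3$ and rank computations, building the resolution from the Green--Solberg--Zacharia sets $\cG^n$ and certifying exactness with the extended Green--Snashall theorem of Appendix B rather than by computing kernels directly. One correction to your heuristic, which the rank computation would expose anyway: rescaling arrows never changes $\la$, and the $\la$-rescaling deformation (the socle-valued cochain $f^2_{1,2}\equiv -f^2_{5,2}$) is a nontrivial class for \emph{every} $\la$ and is the only class when $\la\ne 1$, whereas the extra class at $\la=1$ is $f^2_{1,1}+f^2_{2,1}+f^2_{5,1}$ (values $e_1,\al_2,e_4$ on $g^2_1,g^2_2,g^2_5$), whose coboundary is $(\la-1)(f^3_{2,2}-f^3_{6,2})$.
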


As a consequence, Theorem \ref{thm:der-eq-cls-std-symm-rep-inf-dom} (hence also \cite[Theorem 1]{bocian2004derived}) is not correct.
We will see that this error is caused by Theorem \ref{thm:BS05} by showing the following statement, which together with Proposition \ref{prp:prop-La-la} makes a counterexample of Theorem \ref{thm:BS05} for $\la \ne 1$. 

\begin{prp}
[Proposition \ref{prp:symmetric-iff-la=1}]
For any $\la \in \k^\times$, 
$\La_\la$ is symmetric if and only if $\la = 1$.
\end{prp}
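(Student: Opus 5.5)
The plan is to prove the two directions separately, using the characterization of symmetric algebras by nondegenerate symmetric linear forms: a finite-dimensional algebra $A$ is symmetric if and only if there is a linear form $\varphi\colon A\to\k$ with $\varphi(ab)=\varphi(ba)$ for all $a,b\in A$ whose kernel contains no nonzero one-sided ideal. Since $\La_\la$ is weakly symmetric (Proposition \ref{prp:prop-La-la}), each $e_i\La_\la$ has simple socle spanned by a cycle $\omega_i$ at $i$. From the displayed structures of the projectives we read off
\[
\omega_1=\al_1\al_2\al_3\al_4=\be_1\be_2\be_3,\ \omega_2=\al_2\al_3\al_4\al_1,\ \omega_3=\al_3\al_4\al_1\al_2,\ \omega_4=\al_4\al_1\al_2\al_3=\la\be_3\be_1\be_2,\ \omega_5=\be_2\be_3\be_1 .
\]
In the basis of paths given by these diagrams, $\omega_1,\dots,\omega_5$ span $\soc\La_\la$.

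For the case $\la=1$, define $\varphi$ by $\varphi(\omega_i)=1$ for all $i$, and $\varphi=0$ on all other basis paths. Nondegeneracy holds because any nonzero right ideal meets the socle, and $\varphi$ is nonzero on each socle element $c\,\omega_i$. For symmetry it suffices to check $\varphi(pq)=\varphi(qp)$ for basis paths $p,q$. Both sides vanish unless $pq$ or $qp$ is a nonzero cycle of maximal length, i.e.\ a rotation of the $\al$-cycle or of the $\be$-cycle. Rotating the $\al$-cycle gives $\omega_1,\omega_2,\omega_3,\omega_4$, all with value $1$. Rotating the $\be$-cycle gives $\be_1\be_2\be_3=\omega_1$, $\be_2\be_3\be_1=\omega_5$ and $\be_3\be_1\be_2=\la^{-1}\omega_4=\omega_4$, all with value $1$. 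So $\varphi$ is symmetric, and $\La_1$ is symmetric; this is also consistent with $\La_1\iso T(2,3)$.

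For the converse, suppose $\varphi$ is a symmetric nondegenerate form on $\La_\la$. Then $\varphi(\omega_i)\neq0$ for each $i$, since otherwise the right ideal $\omega_i\La_\la=\k\omega_i$ lies in $\ker\varphi$. Now use the cyclic invariance $\varphi(pq)=\varphi(qp)$ on the rotations of both cycles:
\[
\varphi(\omega_1)=\varphi(\al_1\cdot\al_2\al_3\al_4)=\varphi(\al_2\al_3\al_4\al_1)=\varphi(\omega_2),
\]
and similarly $\varphi(\omega_2)=\varphi(\omega_3)=\varphi(\omega_4)$. On the $\be$-side,
\[
\varphi(\omega_1)=\varphi(\be_1\cdot\be_2\be_3)=\varphi(\be_2\be_3\be_1)=\varphi(\omega_5),
\]
\[
\varphi(\omega_5)=\varphi(\be_2\cdot\be_3\be_1)=\varphi(\be_3\be_1\be_2)=\la^{-1}\varphi(\omega_4).
\]
Combining these gives $\varphi(\omega_1)=\la^{-1}\varphi(\omega_1)$. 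Since $\varphi(\omega_1)\ne0$, this forces $\la=1$.

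The main obstacle is making sure the path computations are legitimate. One must check that the listed paths really form a basis of $\La_\la$, as displayed, so that the socle is exactly as described. One must also check that no additional relations identify $\be_3\be_1\be_2$ with anything other than $\la^{-1}\omega_4$. Both points follow directly from the defining relations and the given structure of the projectives.
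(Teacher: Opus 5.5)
Your proof is correct, but it is not the paper's argument.

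\textbf{The paper's route.} It realizes $\La_\la$ as the twisted trivial extension $B\ltimes (DB)_{\ph_\la}$, where $B$ is the hereditary algebra of type $\widetilde{A}_{2,3}$ and $\ph_\la$ multiplies $\al_1$ by $\la$. It then quotes Ohnuki--Takeda--Yamagata: if this algebra is symmetric, then $\ph_\la$ is inner. An inner automorphism fixing every $e_i$ is conjugation by a diagonal unit $u=\sum_i c_ie_i$. Such a conjugation rescales the two parallel paths $\al_1\al_2\al_3$ and $\be_1\be_2$ from $1$ to $4$ by the same factor $c_1^{-1}c_4$, whereas $\ph_\la$ rescales them by $\la$ and $1$. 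Hence $\la=1$. Conversely, $\La_1=T(B)$ is a trivial extension, hence symmetric.

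\textbf{Your route.} Your symmetrizing-form computation is exactly the ``elementary argument'' the paper mentions but skips. It uses only the relations and the displayed bases, not the identification of $\La_\la$ as an orbit category or the external lemma. It writes down the symmetrizing form for $\la=1$ explicitly. For $\la\neq1$ it locates the obstruction in the clash between cyclic invariance around the $\al$- and $\be$-cycles and the scalar in $\al_4\al_1\al_2\al_3=\la\be_3\be_1\be_2$.

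\textbf{Comparison.} The paper's route explains the parameter conceptually, as the twist of a twisted trivial extension, and applies unchanged to any $B\ltimes(DB)_\ph$. Underneath, both proofs detect the same invariant: the ratio between the two parallel paths from $1$ to $4$.

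\textbf{One step to tighten.} In the nondegeneracy check for $\la=1$, a nonzero right ideal $J$ a priori contains only some nonzero $\sum_i c_i\omega_i$, not a multiple of a single $\omega_i$. Multiply on the right by $e_k$ with $c_k\neq0$ to get $c_k\omega_k\in J$, so that $J\not\subseteq\ker\varphi$.
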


The paper is organized as follows.
In Sect.\ 2, we prove that for each $\la \in \k^\times$,
$\La_\la$ is a weakly symmetric algebra
of Euclidean type, and with a singular Cartan matrix.
Moreover, we show for each $\la \in \k^\times$ that
$\La_\la$ is symmetric if and only if $\la = 1$, and that
$\La_\la$ and $\La_1$ have a common stable AR-quiver.

In Sect.\ 3, we compute Hochschild cohomologies of $\La_\la$, and in Appendix A, we give a proof of Proposition \ref{prp:stAR-iso-AR(over-socle)'-as-tr-qv} for the benefit of the reader.
In Appendix B, we extend Proposition 2.8 and Theorem 2.9 in \cite{green2004projective}
by Green--Sashall, which is used in this paper to compute a projective bimodule resolution of a bimodule that is projective as a right module.

\section*{Acknowledgments}

This project originated in discussions between HA and MS during HA's visit to Murcia in March 2015. These discussions led to the discovery of the family $(\Lambda_\lambda)_{\lambda\in k^\times}$ and the observation that $\Lambda_\lambda$ is symmetric if and only if $\lambda=1$. This in turn led to the conjecture that $\Lambda_\lambda$ and $\Lambda_\mu$ are derived equivalent if and only if $\lambda=\mu$.

AI joined the project in September 2015 to undertake the Hochschild cohomology computations. After a long interruption, HA and AI resumed the project in October 2025 and got the counterexample mentioned above. 

The result was announced at a conference ``Representation Theory of Rings and Triangulated Categories'' held at Okayama University of Science in honor of Professor Kiriko Kato's 60-th birthday on October 3, 2026.  AI would like to thank the organizers of the conference for giving an opportunity to give a talk
and some participants for valuable comments.

A proof of the derived equivalence conjecture will be presented in a separate paper by HA.

For some of the proofs an extension of a result of Green and Snashall, given in a final appendix written by HA in 2016, is used. HA thanks Edward Green for his valuable suggestions during their email correspondence.

HA is partially supported by JSPS Grant-in-Aid for Challenging Exploratory Research 25610003, Scientific Research (B) 25287001, (C) 18K03207, (C) 25K06922, and by Osaka Central Advanced Mathematical Institute
(MEXT Promotion of Distinctive Joint Research Center Program JPMXP0723833165).

AI is supported by JSPS Grant-in-Aid for Scientific Research (C) 24K06653.

MS is supported by the grant PID2024-155576NB-I00, funded by the MICIU/AEI and FEDER of the UE, and the project 2024/PI/22, funded by the Fundaci\'on 'S\'eneca' of Murcia.

\color{black}
\section{Properties of $\La_\la$}

In this section, we prove fundamental properties of our algebras $\La_\la$
\ ($\la \in \k^\times$).

\begin{prp}
\label{prp:fundamental-prop}
For any $\la \in \k^\times$,
$\La_\la$ is weakly symmetric algebra of Euclidean type with a singular Cartan matrix.
\end{prp}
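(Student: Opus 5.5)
The plan has three parts. First, read off selfinjectivity and weak symmetry from the listed projectives. Second, compute the Cartan matrix directly. Third, realise $\La_\la$ as an orbit algebra $\hat{B}/G$ with $B$ hereditary of Euclidean type.

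\emph{Weak symmetry.} Using the bases of the indecomposable projectives $e_i\La_\la$ given in Definition \ref{dfn:La_la}, I will check that each $e_i\La_\la$ has a simple socle. These socles are spanned by $\al_1\al_2\al_3\al_4$, $\al_2\al_3\al_4\al_1$, $\al_3\al_4\al_1\al_2$, $\al_4\al_1\al_2\al_3$ and $\be_2\be_3\be_1$. The verification uses the zero relations and the two commutativity relations, with $\la\ne0$ ensuring that $\be_3\be_1\be_2$ and $\al_4\al_1\al_2\al_3$ span the same line. Each of these socle paths ends at the vertex $i$ where it starts, so $\soc(e_i\La_\la)\iso \top(e_i\La_\la)$ for every $i$. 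In particular the five socles are pairwise non-isomorphic. A standard dimension argument (each $e_i\La_\la$ then embeds in the injective hull of $S_i$, and the total dimensions agree) shows that $\La_\la$ is selfinjective, hence weakly symmetric with identity Nakayama permutation.

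\emph{Singular Cartan matrix.} Counting composition factors from the same bases gives the rows $\underline{\dim}\, e_1\La_\la = \underline{\dim}\, e_4\La_\la=(2,1,1,2,1)$. So the Cartan matrix has two equal rows and is singular. It is visibly independent of $\la$.

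\emph{Euclidean type.} Let $B:=\k Q'$, where $Q'$ is the acyclic quiver obtained from $\sfQ$ by deleting the arrows $\al_4$ and $\be_3$. This $Q'$ has two paths $1\to4$, one of length $3$ via $2,3$ and one of length $2$ via $5$, so it is of type $\tilde{A}_{2,3}$ (Euclidean) and $T(B)\iso\La_1$. Let $\phi$ be the automorphism of $B$ that fixes the vertices, multiplies one arrow (say $\be_1$) by $\la$, and fixes the other arrows. Let $\hat\phi$ be its extension to $\hat{B}$ and $\nu$ the Nakayama automorphism of $\hat{B}$. Then $G:=\ang{\hat\phi\nu}$ is admissible, and $\hat{B}/G$ is the twisted trivial extension of \cite{asashiba2002derived}. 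As a vector space it is $B\ds D(B)$, with the right $B$-action on $D(B)$ twisted by $\phi$.

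I will write down a presentation of this algebra by generators and relations. The new arrows $\al_4,\be_3$ correspond to the dual basis elements of $D(B)$ attached to the two maximal paths $\al_1\al_2\al_3$ and $\be_1\be_2$. Computing products in the twisted bimodule should give exactly the generators of $I_\la$. In particular, the twist by $\phi$ should produce the scalar $\la$ in $\al_4\al_1\al_2\al_3=\la\be_3\be_1\be_2$ while leaving $\al_1\al_2\al_3\al_4=\be_1\be_2\be_3$ untwisted. A dimension count ($\dim\La_\la=2\dim B$) then gives $\La_\la\iso\hat{B}/G$.

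The main obstacle is this identification, specifically getting the scalar to land in the correct relation and in no other. If the naive choice of $\phi$ puts $\la$ in the wrong place, I would rescale the generators $\al_4,\be_3$ or change which arrow $\phi$ scales. As a fallback, I would instead invoke the criterion from the Remark following Theorem \ref{thm:der-eq-cls-std-symm-rep-inf-dom}. Its hypotheses would be checked as follows: standardness from the Galois covering $\hat{B}\to\La_\la$, and representation-infinite domesticity from the fact that $\La_\la$ is socle equivalent to $\La_1=T(B)$, so that $\La_\la/\soc\iso\La_1/\soc$.
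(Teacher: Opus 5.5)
Your proposal is correct and is essentially the paper's proof. Weak symmetry is read off from the indecomposable projectives; your socle-plus-dimension-count argument spells out what the paper calls obvious. The Cartan matrix is singular because rows $1$ and $4$ coincide. Euclidean type comes from $\La_\la\iso\hat{B}/\ang{\hat{\ph}\nu_B}\iso B\ltimes D(B)_{\ph}$, with $B$ the hereditary algebra of type $\tilde{A}_{2,3}$ and $\ph$ a vertex-fixing automorphism rescaling one arrow.

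The one thing to fix is the scalar. With the right twist you describe, $\ph(\be_1)=\la\be_1$ gives $\al_1\al_2\al_3\al_4=\be_1\be_2\be_3$ but $\al_4\al_1\al_2\al_3=\la^{-1}\be_3\be_1\be_2$, so you obtain $\La_{\la^{-1}}$. Rescaling $\al_4,\be_3$ cannot repair this, because rescaling arrows leaves the ratio of the two commutativity scalars unchanged. Take $\ph(\be_1)=\la^{-1}\be_1$ instead, or, as the paper does, $\ph(\al_1)=\la\al_1$.
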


\begin{proof}
It is obvious that $\La_\la$ is weakly symmetric by looking at the structures of projective indecomposable right (resp.\ left) modules
(see Definition \ref{dfn:La_la}). 

The Cartan matrix of $\La_\la$ is given by
$$
\bmat{
2&1&1&2&1\\
1&2&1&1&0\\
1&1&2&1&0\\
2&1&1&2&1\\
1&0&0&1&2
},
$$
and its determinant is 0.

Recall that a twisted multifold extension $T^n_\ph(A)$ of an algebra $A$ was defined as the (classical) orbit category
$$
T^n_\ph(A):= \hat{A}/\ang{\hat{\ph}\nu_A^n},
$$
where $n$ is a positive integer, and $\ph$ is an automorphism of $A$,
$\nu_A$ is the Nakayama automorphism of $\hat{A}$, and
$\hat{\ph}$ is an automorphism of $\hat{A}$ induced by $\ph$ canonically
(see \cite{asashiba2002derived} for details).
We use this construction.

Let $B$ be the hereditary algebra of type $\widetilde{A}_{2,3}$
defined by the quiver $\sfQ'$ obtained from $\sfQ$ by removing the arrows $\al_4$ and $\be_3$.
For each $\la \in \k^\times$, define an automorphism $\ph_\la$ of $B$ by
fixing all vertices of $\sfQ'$ and by 
$$
\ph_\la(\al):= \begin{cases}
\la\al & \text{if $\al = \al_1$}\\
\al &\text{otherwise}
\end{cases}
$$
for all arrows $\al$ in $\sfQ'$.

Then it is easy to see by the computation of orbit category stated in
\cite{asashiba2011generalization} (by taking its skeleton) that we have
\begin{equation}
\label{eq:La_la-as-tme}
\La_\la \iso T^1_{\ph_\la}(B) = \hat{B}/\ang{g_\la},
\end{equation}
where we put $g_\la:= \widehat\ph_\la\nu_B$.
Since $B$ is hereditary of type $\widetilde A_{2,3}$,
it is a tilted algebra of Euclidean type.
Since $\ph_\la$ fixes the vertices of $\sfQ'$, we have $g_\la(x^{[i]})=x^{[i+1]}$ for all $x \in \sfQ'_0$ and  $i \in \bbZ$.
Thus $\ang{g_\la}$ is an admissible infinite cyclic group of automorphisms of $\hat B$. The isomorphism \eqref{eq:La_la-as-tme}
therefore shows that $\La_\la$ is of Euclidean type.
\end{proof}

The following can be directly verified by an elementary argument, but
here we show it by applying \cite[Theorem 2.1 and Lemma 1.3]{ohnuki2000automorphisms}.

\begin{prp}
\label{prp:symmetric-iff-la=1}
For any $\la \in \k^\times$, 
$\La_\la$ is symmetric if and only if $\la = 1$.
\end{prp}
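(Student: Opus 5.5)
Since $\La_\la$ is basic and (by Proposition \ref{prp:fundamental-prop}) weakly symmetric, its Nakayama permutation is the identity. Hence $\La_\la$ is symmetric if and only if its Nakayama automorphism $\nu$ is inner. The criterion I have in mind is the one the paper cites from Ohnuki--Takeda--Yamagata \cite[Theorem 2.1 and Lemma 1.3]{ohnuki2000automorphisms}, roughly: for a weakly symmetric algebra given by a quiver with relations, one computes the Nakayama automorphism by means of a nondegenerate linear form $\phi$ that is nonzero on the socle elements and vanishes on the other paths. The plan has three steps.
\begin{enumerate}[(1)]
\item Fix the basis of paths listed in Definition \ref{dfn:La_la} and define $\phi$ by
$$\phi(\al_1\al_2\al_3\al_4)=\phi(\al_2\al_3\al_4\al_1)=\phi(\al_3\al_4\al_1\al_2)=\phi(\al_4\al_1\al_2\al_3)=\phi(\be_2\be_3\be_1)=1,$$
with $\phi=0$ on all other basis paths.
\item Compute $\nu$ on arrows from the identity $\phi(a x)=\phi(x\,\nu(a))$; it suffices to test $x$ ranging over paths complementary to $a$ in the socle.
\item Decide whether this $\nu$ is inner.
\end{enumerate}

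For step (2), the $\al$-arrows give: $\phi(\al_1\cdot\al_2\al_3\al_4)=1=\phi(\al_2\al_3\al_4\cdot\al_1)$, so $\nu(\al_1)=\al_1$, and similarly for $\al_2,\al_3,\al_4$. For the $\be$-arrows, use $\be_1\be_2\be_3=\al_1\al_2\al_3\al_4$ and $\be_3\be_1\be_2=\la\inv\al_4\al_1\al_2\al_3$:
\begin{itemize}
\item $\phi(\be_1\cdot\be_2\be_3)=1$ and $\phi(\be_2\be_3\cdot\be_1)=1$, so $\nu(\be_1)=\be_1$;
\item $\phi(\be_2\cdot\be_3\be_1)=1$ and $\phi(\be_3\be_1\cdot\be_2)=\la\inv$, so $\nu(\be_2)=\la\be_2$;
\item $\phi(\be_3\cdot\be_1\be_2)=\la\inv$ and $\phi(\be_1\be_2\cdot\be_3)=1$, so $\nu(\be_3)=\la\inv\be_3$.
\end{itemize}
Check that these scalars are consistent on the remaining length-two and length-one pairings. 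Choosing $\phi$ with different normalizations only changes $\nu$ by an inner automorphism, so this is harmless.

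For step (3), $\nu$ fixes all vertices and rescales arrows. If $\la=1$, $\nu$ is the identity, so $\La_1$ is symmetric. For $\la\neq1$ I must show that $\nu$ is not inner. Suppose $\nu=\iota_u$ with $u$ a unit. Since $\nu$ fixes every $e_i$, $u$ commutes with the $e_i$, so $u=\sum_i e_iue_i$. Modulo $\rad^2$, $u$ acts on an arrow $a\colon i\to j$ by the scalar $u_i u_j\inv$, where $u_i\in\k^\times$ is the constant term of $e_iue_i$. Thus the scalars $c_a$ by which $\nu$ multiplies the arrows must form a coboundary on $\sfQ$, so their product around every cycle must equal $1$. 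Around the cycle $\be_1\be_2\be_3$ this product is $1\cdot\la\cdot\la\inv=1$, so that cycle gives no obstruction. This signals that the normalization of $\phi$ must be redone more carefully: the two socle paths $\al_1\al_2\al_3\al_4$ and $\be_1\be_2\be_3$ coincide, and likewise at vertex 4, so the values of $\phi$ are constrained at vertices 1 and 4. The correct statement compares the $\al$-cycle with the $\be$-cycle. Redoing step (2), the $\al$-cycle scalars multiply to $1$ and the $\be$-cycle scalars multiply to $\la^{\pm1}$, or the reverse. Since a coboundary has product $1$ on each cycle, this rules out innerness when $\la\ne1$. One must also handle the higher-radical part of $u$; the key point is that the cycle products are invariants of the induced automorphism on $\La/\rad^2$ up to inner automorphisms, which fix the vertices. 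This is essentially the content of \cite[Lemma 1.3]{ohnuki2000automorphisms}.

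I expect the main obstacle to be step (2): the bookkeeping of $\phi$'s normalization given the two commutativity-type relations with the factor $\la$. The test is to find which single cycle product $\nu$ multiplies by $\la$. I would first try reading it off directly by comparing $\phi$ on the two rotations $\al_4\al_1\al_2\al_3$ and $\be_3\be_1\be_2$ at vertex 4.
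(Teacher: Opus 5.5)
Your step (2) is correct. Your $\phi$ is nondegenerate, since it is nonzero on each of the five simple socles, and it gives $\nu(\al_i)=\al_i$, $\nu(\be_1)=\be_1$, $\nu(\be_2)=\la\be_2$ and $\nu(\be_3)=\la\inv\be_3$.

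The gap is in step (3). You test only the cycles $\al_1\al_2\al_3\al_4$ and $\be_1\be_2\be_3$, and you correctly find product $1$ for both. You then conclude that step (2) must be redone, and assert that afterwards the $\be$-cycle product becomes $\la^{\pm1}$. That assertion is false. It also contradicts your own correct remark that renormalizing $\phi$ is harmless. Replacing $\phi$ by $x\mapsto\phi(xw)$ for a unit $w$ replaces $\nu$ by $a\mapsto w\nu(a)w\inv$. On $\rad\La_\la/\rad^2\La_\la$ this is conjugation by the image $\ovl{w}=\sum_i w_ie_i$ of $w$ in $\La_\la/\rad\La_\la$, i.e.\ the scalar on an arrow $i\to j$ gets multiplied by $w_iw_j\inv$, a coboundary. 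So the products around the $\al$-cycle and the $\be$-cycle equal $1$ for every normalization of $\phi$, and no recomputation produces the obstruction you describe. As written, the direction ``$\la\ne1\Rightarrow\La_\la$ not symmetric'' rests on an incorrect claim.

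The obstruction is already in the scalars you computed, but it sits on the mixed cycles of $\sfQ$. The cycle $\al_1\al_2\al_3\be_3$ has product $\la\inv$, and $\be_1\be_2\al_4$ has product $\la$. These paths are zero in $\La_\la$, but the coboundary condition holds arrow by arrow, so it constrains every closed walk in $\sfQ$.

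More transparently: $\al_4$ and $\be_3$ are parallel arrows $4\to1$. Conjugation by any unit acts on the two-dimensional space $e_4(\rad\La_\la/\rad^2\La_\la)e_1=\langle\ovl{\al_4},\ovl{\be_3}\rangle$ as a single scalar. But $\nu$ acts there with eigenvalues $1$ and $\la\inv$, so $\nu$ is not inner when $\la\ne1$.

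With this repair your route is a valid, self-contained alternative to the paper's. The paper never computes $\nu$. It identifies $\La_\la\iso B\ltimes(DB)_{\ph_\la}$ via Ohnuki--Takeda--Yamagata and reduces symmetry to innerness of the twist $\ph_\la$ of the hereditary algebra $B$. It then rules that out using the parallel paths $\al_1\al_2\al_3$ and $\be_1\be_2$ from $1$ to $4$, which $\ph_\la$ scales by $\la$ and $1$ respectively.

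Those two paths are exactly the dual partners of $\al_4$ and $\be_3$ under the pairing $e_1\La_\la e_4\times e_4\La_\la e_1\to\k$, $(x,y)\mapsto\phi(xy)$. So both arguments detect the same discrepancy. Yours costs an explicit Nakayama computation; the paper's costs citing the twisted trivial extension description.
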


\begin{proof}
Let $\la \in \k^\times$, and $B$ be the hereditary algebra of type $\tilde{A}_{2,3}$ as in the proof of Proposition \ref{prp:fundamental-prop}.
By \cite[Theorem 2.1]{ohnuki2000automorphisms}, we have an isomorphism
$$
\La_\la \iso \hat{B}/\ang{\widehat{\ph}_\la\nu_B} \iso B \ltimes (DB)_{\ph_\la}.
$$
Assume that $\La_\la$ is symmetric.
Then by \cite[Lemma 1.3]{ohnuki2000automorphisms},
$\ph_\la$ is an inner automorphism of $B$, that is,
$\ph_\la = u\inv (\blank) u$ for some unit $u \in B$.
Since $\ph_\la$ fixes all vertices $i \in \sfQ'_0 =\{1,\dots,5\}$, we have $u\inv e_i u = e_i$. Thus
\begin{equation}
\label{eq:u-commutes-e_i}
e_i u  = u e_i
\end{equation}
for all $i = 1, \dots, 5$.
Since $1_B = e_1 + \cdots + e_5$ and $e_iBe_i = \k e_i$ for all $i = 1,\dots, 5$, we have 
$$
u = \sum_{i1}^5 u e_i \overset{\eqref{eq:u-commutes-e_i}}{=} \sum_{i1}^5 e_i u e_i = \sum_{i1}^5 c_i e_i
$$
for some $c_1, \dots, c_5 \in \k$.
Note that since $u$ is a unit of $B$, $0 \ne ue_i = c_i e_i$, and hence $c_i \ne 0$ for all $i=1,\dots, 5$.
Put $p:= \al_1 \al_2 \al_3$ and $q:= \be_1 \be_2$.
Then both are paths from 1 to 4.
Therefore,
$\ph_\la(p) = \la p$ shows that $\la p = u\inv p u = c_1\inv c_4 p$.
Moreover, $\ph_\la(q) = q$ shows that $q = u\inv q u = c_1\inv c_4 q$.
Hence $\la = c_1\inv c_4 = 1$.

Conversely, if $\la = 1$, then $\ph_\la = \ph_1 = \id_B$, and
$\La_1 = B \ltimes DB$ is symmetric.
\end{proof}

We will show that
algebras $\La_\la$ have a common stable AR-quiver for all 
$\la \in \k^\times$ using the following.

\begin{prp}
\label{prp:stAR-iso-AR(over-socle)'-as-tr-qv}
Let $\La$ be a selfinjective algebra, which is not semisimple. 
Then we have 
$$
\stAR{\La} \iso \Ga_{\La/\soc\La}'
$$
as valued translation quivers, where $\Ga_{\La/\soc\La}'$ is constructed
from $\Ga_{\La/\soc\La}$ by adding the translations
$$
D((\La/\soc \La)e) \dashleftarrow e(\La/\soc \La)
$$
for all primitive idempotents $e$ of $\La/\soc \La$.
\end{prp}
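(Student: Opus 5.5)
The idea is to compare the Auslander--Reiten theory of $\La$ with that of $\bar\La:=\La/\soc\La$ through the full embedding $\mod\bar\La\subseteq\mod\La$. I will use three standard facts about a non-semisimple selfinjective algebra $\La$.
\begin{enumerate}[(a)]
\item Every indecomposable non-projective $\La$-module $M$ is annihilated by $\soc\La$. This is because $M\cdot\soc(e\La)\ne0$ would force $e\La$ to be a direct summand of $M$.
\item For each primitive idempotent $e$, the AR-sequence with middle term containing $e\La$ is
$$0\to\rad(e\La)\to e\La\oplus\rad(e\La)/\soc(e\La)\to e\La/\soc(e\La)\to0 .$$
\item The AR-translate $\tau_\La=\Omega^2\nu$ sends non-projective indecomposables to non-projective indecomposables.
\end{enumerate}

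First I identify the vertices. By (a), the vertices of $\stAR{\La}$ are exactly the indecomposable $\bar\La$-modules that are not projective $\La$-modules. These are all indecomposable $\bar\La$-modules, because $\soc\La\ne0$ annihilates every $\bar\La$-module, so none of them is isomorphic to some $e\La$. Next I identify the arrows with their valuations. Irreducible maps between non-projective indecomposable $\La$-modules $X\to Y$ are the same as irreducible maps in $\mod\bar\La$. Indeed, $\mod\bar\La$ is a full subcategory closed under submodules and quotients, and a morphism that factors through a $\La$-module $Z$ factors through $Z/Z\soc\La$ whenever the target is killed by $\soc\La$. So $\rad_\La(X,Y)=\rad_{\bar\La}(X,Y)$ and $\rad^2_\La(X,Y)=\rad^2_{\bar\La}(X,Y)$, and the valuations agree. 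The stable quiver only deletes the projective vertices $e\La$ of $\Ga_\La$. Arrows between $\bar\La$-modules passing through $e\La$ do not count, since they are not irreducible maps between those two modules.

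The translation is the main point, and I expect it to be the main obstacle. Let $0\to\tau X\to E\to X\to0$ be an AR-sequence in $\mod\La$ with $X$ non-projective, and suppose $E$ has no projective summand. Then all terms are $\bar\La$-modules, and the sequence remains almost split in $\mod\bar\La$. This holds because any non-split-epi map $W\to X$ with $W$ a $\bar\La$-module factors in $\mod\La$, hence in $\mod\bar\La$. Therefore $\tau_{\bar\La}X=\tau_\La X$. By (b), $E$ has a projective summand $e\La$ exactly when $X=e\La/\soc(e\La)=e\bar\La$. Here I check that $e\La/\soc e\La$ is projective over $\bar\La$, since $(e\La)\soc\La=\soc(e\La)$ for selfinjective $\La$. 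Dually, $\rad(e\La)\iso D(\bar\La e)$ is injective over $\bar\La$, using $\rad(e\La)=D(\La\nu e/\soc)$ up to the Nakayama permutation, which I will verify carefully with $\soc(\La e')$. So the $\bar\La$-projectives $e\bar\La$ have no $\tau_{\bar\La}$, but in $\stAR{\La}$ they acquire the translate $\tau_\La(e\bar\La)=\rad(e\La)\iso D(\bar\La e')$ for the appropriate $e'$. These are precisely the added dashed translations in $\Ga'_{\La/\soc\La}$.

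It remains to check that no other vertex changes status. A $\bar\La$-module that is non-projective over $\bar\La$ is not of the form $e\bar\La$, so its $\tau$ agrees in both categories by the argument above. Every non-$\La$-projective $X$ has a $\tau_\La$ by (c). The only subtle bookkeeping is the precise labeling of the idempotents in the added arrow $D((\La/\soc\La)e)\dashleftarrow e(\La/\soc\La)$. For this I will identify $\rad(e\La)$ with $D(\bar\La e)$ via the duality $D(\La e)\iso \nu(e)\La$ and the standard isomorphism $\rad P/\!\!\ \cong$ dual of $Q/\soc Q$, being careful about which Nakayama twist appears. If the twist is needed, the statement still holds after reindexing $e$, since the added translations run over all primitive $e$.
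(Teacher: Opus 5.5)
Your overall route is the paper's. You identify the vertices via $M(\soc\La)\neq 0\Leftrightarrow M$ projective. You compare translations by noting that a sequence of $\bar\La$-modules is almost split in $\mod\bar\La$ iff it is almost split in $\mod\La$, together with the sequence $0\to\rad P\to P\oplus\rad P/\soc P\to P/\soc P\to 0$. Everything is fine up to $\tau_\La(e\bar\La)=\rad(e\La)$, but the last step fails.

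Let $\nu(e)$ be the primitive idempotent with $\soc(e\La)\iso\operatorname{top}(\nu(e)\La)$. Then $e\La\iso D(\La\,\nu(e))$, hence $\rad(e\La)\iso D(\bar\La\,\nu(e))$, and this is $D(\bar\La e)$ only if $\nu(e)=e$. Reindexing cannot repair this. The family $D(\bar\La e)\dashleftarrow e\bar\La$ pairs the injective and the projective belonging to the same $e$, so running over all $e$ never produces the pairs $D(\bar\La\,\nu(e))\dashleftarrow e\bar\La$.

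In fact the statement fails when $\nu\neq\mathrm{id}$. Take $\La=\k Q/(ab,ba)$, where $Q$ has arrows $a\colon 1\to 2$ and $b\colon 2\to 1$. Then $\bar\La\iso\k\times\k$, and both quivers have vertices $S_1,S_2$ and no arrows. Since $D(\bar\La e_i)\iso S_i\iso e_i\bar\La$, $\Ga'_{\bar\La}$ has $S_i\dashleftarrow S_i$. However, the almost split sequences $0\to S_2\to e_1\La\to S_1\to 0$ and $0\to S_1\to e_2\La\to S_2\to 0$ show that $\tau_\La$ interchanges $S_1$ and $S_2$.

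What your argument actually proves is the version with added translations $D(\bar\La\,\nu(e))\dashleftarrow e\bar\La$. This coincides with the stated one exactly when $\soc(e\La)\iso\operatorname{top}(e\La)$ for all $e$, i.e. for weakly symmetric $\La$. That is all the paper needs, since Proposition~\ref{prp:st-AR-same} concerns the weakly symmetric $\La_\la$. The paper's proof makes the same identification $\rad(e\La)\iso D(\bar\La e)$ tacitly in the step (4)(b)$\Rightarrow$(5). You should state the weakly symmetric hypothesis, or the Nakayama twist, explicitly rather than claim the general case.

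There is also a smaller gap in the arrow step. Factoring $X\xrightarrow{f}Z\xrightarrow{g}Y$ through $Z/Z\soc\La$ does not keep the factors radical. If $Z=e\La$ and $g$ is the projection onto $Y=e\bar\La$, the induced map $e\bar\La\to Y$ is an isomorphism, so you learn nothing about $\rad^2_{\bar\La}(X,Y)$.

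The inclusion $\rad^2_\La(X,Y)\subseteq\rad^2_{\bar\La}(X,Y)$ is still true, but it needs two further facts:
\begin{enumerate}[(i)]
\item $f(X)\subseteq\rad(e\La)$, because $X$ is non-projective.
\item $X\to\rad(e\La)\to\rad(e\La)/\soc(e\La)=\rad(e\bar\La)$ is radical in $\mod\bar\La$. An isomorphic component $X\to W$ would make $W$ a direct summand of the indecomposable module $\rad(e\La)\iso\Omega(\operatorname{top}e\La)$. That is impossible since $\dim_\k W<\dim_\k\rad(e\La)$.
\end{enumerate}

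The paper avoids this by comparing source and sink maps instead. For $X$ non-injective over $\bar\La$, the $\bar\La$-almost split sequence starting at $X$ is almost split in $\mod\La$. For $X\iso\rad(e\La)$, one compares $X\to X/\soc X$ with $\rad(e\La)\to e\La\oplus\rad(e\La)/\soc(e\La)$. The dual argument for sink maps then gives the valuations directly.
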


This is proved in Appendix (see Proposition \ref{prp:stAR-again}).

\begin{prp}
\label{prp:st-AR-same}
For any $\la \in \k^\times$, 
$\stAus{\La_\la} \iso \stAus{\La_1}$ as valued translation quivers.
\end{prp}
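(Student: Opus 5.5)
The plan is to apply Proposition~\ref{prp:stAR-iso-AR(over-socle)'-as-tr-qv} to both $\La_\la$ and $\La_1$. This reduces the claim to comparing the algebras $\La_\la/\soc\La_\la$ and $\La_1/\soc\La_1$, together with the extra translations attached to their primitive idempotents. The key observation is that $\La_\la/\soc\La_\la \iso \La_1/\soc\La_1$ as algebras, and that such an isomorphism can be chosen to fix every vertex of $\sfQ$.

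Concretely, the socle of $\La_\la$ is spanned by the five maximal paths $\al_1\al_2\al_3\al_4$, $\al_2\al_3\al_4\al_1$, $\al_3\al_4\al_1\al_2$, $\al_4\al_1\al_2\al_3$ and $\be_2\be_3\be_1$. Factoring them out kills the two commutativity relations $\al_1\al_2\al_3\al_4=\be_1\be_2\be_3$ and $\al_4\al_1\al_2\al_3=\la\be_3\be_1\be_2$: each side becomes zero. This is the only place where $\la$ enters. So $\La_\la/\soc\La_\la$ is presented by the quiver $\sfQ$ modulo the ideal generated by the monomial relations
$\al_4\be_1,\ \be_3\al_1,\ \al_3\be_3,\ \be_2\al_4$ and all paths of length $4$ in the $\al$-cycle, together with $\be_1\be_2\be_3,\ \be_2\be_3\be_1,\ \be_3\be_1\be_2$. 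This presentation is independent of $\la$. I would check this by comparing bases. A basis of $\La_\la$ is read off from the displayed projective modules $e_i\La_\la$. Deleting the socle element from each of them leaves exactly the nonzero paths of the monomial algebra above, and the identification is the identity on paths. Hence the identity on $\k\sfQ$ induces an isomorphism $\La_\la/\soc\La_\la \iso \La_1/\soc\La_1$ that fixes each $e_i$.

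Since this isomorphism fixes the idempotents, it induces an isomorphism of the AR-quivers $\Ga_{\La_\la/\soc\La_\la}\iso\Ga_{\La_1/\soc\La_1}$ as valued translation quivers. This isomorphism sends $e_i(\La_\la/\soc\La_\la)$ to $e_i(\La_1/\soc\La_1)$ and $D((\La_\la/\soc\La_\la)e_i)$ to $D((\La_1/\soc\La_1)e_i)$. Therefore it is compatible with the added translations, and it extends to $\Ga'_{\La_\la/\soc\La_\la}\iso\Ga'_{\La_1/\soc\La_1}$. Combining this with Proposition~\ref{prp:stAR-iso-AR(over-socle)'-as-tr-qv} on both sides gives the statement.

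The main obstacle is the careful verification that the socle is exactly the span of those five paths. In particular, at vertices $1$ and $4$ the socle is one-dimensional, and the paths $\be_1\be_2\be_3$ and $\be_3\be_1\be_2$ are nonzero in $\La_\la$ but lie in the socle. I also have to check that the quotient has no hidden $\la$-dependent relation among the remaining paths. Both points follow from the explicit module diagrams in Definition~\ref{dfn:La_la}, so I expect this step to be routine but fiddly.
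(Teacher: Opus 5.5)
Your proposal is correct and follows the same route as the paper. Both apply Proposition~\ref{prp:stAR-iso-AR(over-socle)'-as-tr-qv} to $\La_\la$ and to $\La_1$, and both reduce to $\La_\la/\soc\La_\la \iso \La_1/\soc\La_1$. The paper simply asserts this isomorphism ``by definition of $\La_\la$''. You verify it explicitly: you identify the five socle paths, observe that modulo them the two commutativity relations (the only place $\la$ enters) become monomial, and note that the isomorphism fixes the $e_i$ and so respects the added translations.
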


\begin{proof}
This follows from the following isomorphisms of translation quivers:
$$
\stAus{\La_\la} \iso \Ga'_{\La_\la/\soc\La_\la} \iso \Ga'_{\La_1/\soc\La_1}\iso \stAus{\La_1},
$$
where the first and the third isomorphisms follow by Proposition \ref{prp:stAR-iso-AR(over-socle)'-as-tr-qv}, and
the second isomorphism follows from the fact that $\La_\la/\soc \La_\la \iso \La_1/\soc\La_1$ for all $\la \in \k^\times$
(this follows by definition of $\La_\la$).
%
%
\end{proof}













\section{Hochschild cohomology groups}
Let $\La=\k\sfQ/I$ be a finite-dimensional algebra 
over an algebraically closed field $\k$ for a
finite quiver $\sfQ=(\sfQ_{0},\sfQ_{1},\bfs, \bft)$,
where $\sfQ_{0}$ (resp.\ $\sfQ_{1}$) is a set of vertices
(resp.\ the set of arrows) of $\sfQ$,  
$\bfs$ (resp. $\bft$) is a map sending each arrow to its source
(resp.\ target), 
and $I$ is an admissible ideal of $\k\sfQ$. 
For the general notation, we refer to \cite{assem2006elements}. 
Tensor products over $\k$ are simply denoted by $\ox$.

\subsection{Definition and computation of Hochschild cohomologies}

Let $\La$ be a finite-dimensional algebra over a field $\k$ of the form
$\k \sfQ/I$ for some finite quiver and an admissible ideal $I$ of $\k\sfQ$.
We set
$$
\La^{\rm e}:=\La^{\op}\otimes \La
$$
to be the \emph{enveloping algebra} of $\La$. 
Note that any $\La$-bimodule $M$ is regarded as a right $\Lae$-module
by setting
\begin{equation}
\label{eq:bimod=r_mod_env}
m\cdot(a \ox b) = amb
\end{equation}
for all $m \in M$ and $a, b \in \La$.
Conversely, any right $\Lae$-module $M$ is regarded as a $\La$-bimodule
by setting the right hand side of \eqref{eq:bimod=r_mod_env}
to be the left hand side of \eqref{eq:bimod=r_mod_env}
for all $m \in M$ and $a, b \in \La$.
These show that
the category $\Mod \La^{\rm e}$ of the right $\La^{\rm e}$-modules is isomorphic to the category of the $\La$-bimodules.
By this isomorphism, we identify these categories.

Let us recall the definition of the Hochschild cohomology groups of $\La$.
For a non-negative integer $n$, 
the $n$-th \emph{Hochschild cohomology group} $\HH^{n}(\La)$ of $\La$ is defined by 
$$
\HH^{n}(\La):=\Ext_{\La^{\rm e}}^{n}(\La,\La)\ (n\geq 0).
$$

We remark that the low-dimensional Hochschild cohomology groups have the following meanings:
\begin{itemize}
\item $\HH^{0}(\La)$ is the center $Z(\La)$ of $\La$. 

\item $\HH^{1}(\La)$ is the space of derivations modulo inner derivations. 
Here, a \emph{derivation} is a $\k$-linear map $f:\La \rightarrow \La$ 
such that $f(ab)=af(b)+f(a)b$ for all $a,b\in \La$.
Furthermore,
a derivation $f:\La \rightarrow \La$ is called an \emph{inner derivation} 
if there is some $x\in \La$ such that 
$f(a)=ax-xa$ for all $a\in \La$. 

\item $\HH^{2}(\La)$ gives information on the infinitesimal deformation of the algebra $\La$. 
\end{itemize}

One important property of the Hochschild cohomology is in its invariance under Morita equivalence, derived equivalence 
(\cite{rickard1991derived}). 

To calculate the Hochschild cohomology, 
we start by recalling the construction of sets 
$\cG^{n}$ $(n\geq 0)$ in \cite{green2001minimal}. 
We set $\cG^{0}:= \sfQ_0$, the set of vertices of $\sfQ$, 
$\cG^{1}:= \sfQ_1$, the set of arrows of $\sfQ$, 
and $\cG^{2}$ a minimal set of uniform generators of $I$. 
An element of a path algebra of a quiver is said to be \emph{uniform}
if it is a linear combination of parallel paths.
In \cite{green2001minimal}, Green, Solberg and Zacharia proved that, for each $n\geq 3$, 
we have a set $\mathcal{G}^{n}$ consisting of uniform elements in $\k\sfQ$ 
such that there is a minimal projective resolution $(P\up,d\up)$ 
of the right $\La$-module $\La/\rad\La$ satisfying 
the following conditions (a), (b) and (c): 
\begin{enumerate}[(a)]
\item For each $n\geq 0$, 
$P^{n}=\bigoplus_{g\in \cG^{n}} \bft(g) \La$. 

\item For each $g\in \cG^{n}$ with $n \ge 2$, 
there exist unique families
$r_{h} \in \bft(h)(\k \sfQ)\bft(g)$ 
and 
$s_{l} \in \bft(l) I \bft(g)$
such that
$$
g=\sum_{h\in {\cG}^{n-1}} h r_{h}=\sum_{l\in {\cG}^{n-2}} l s_{l}.
$$

\item For each $n \geq 1$, $d^{n}:P^{n}\rightarrow P^{n-1}$ 
is defined by 
$$
d^{n}(t(g)a)= \left(\sum_{h\in {\cG}^{n-1}}r_{h}\right) t(g)
a
$$ 
for all $g\in {\cG}^{n}$ and $a\in \La$, 
where $r_{h}$ are given in (b) above. 
\end{enumerate}
By constructing the sets $\mathcal{G}^{n}$ $(n\geq 0)$ 
for the right $\La$-module 
$\La/{\rm rad}\,\La$, 
we can also give a minimal projective resolution 
$(Q\up,\partial\up)$ of the right 
$\La^{\rm e}$-module $\La$  by 
\color{black}
applying Theorem \ref{thm:ext-GS} 
in Appendix B, which is an extension of Proposition 2.8 and Theorem 2.9 in \cite{green2004projective}
by Green--Snashall, where $Q\up$ is given by
$Q^{n}:=\bigoplus_{g^{n}\in {\cG}^{n}} \La \bfs(g^{n})\otimes \bft(g^{n})\La$ 
for all $n\geq 0$. 
\color{black}

\subsection{Calculation of Hochschild cohomologies}


\begin{thm}
\label{thm:dim-HH}
For each $n = 0, 1, 2$,
the $\k$-dimension of $n$-th Hochschild cohomology group
of $\La_\la$ is given as follows\textup{:}

$$
\begin{aligned}
\dim_\k \HH^0(\La_\la) &= 6,\\
\dim_\k \HH^1(\La_\la) &= 2,\\
\dim_\k \HH^2(\La_\la) &=
\begin{cases}
1 & \text{ if } \la \ne 1,\\
2 & \text{ if } \la = 1.
\end{cases}
\end{aligned}
$$
\end{thm}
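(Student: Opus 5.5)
We compute $\HH^n(\La_\la)$ for $n=0,1,2$ from the beginning of the minimal projective bimodule resolution $(Q\up,\partial\up)$ described above. We apply $\Hom_{\Lae}(\blank,\La_\la)$ to $Q^0\to Q^1\to Q^2\to Q^3$ and use the identification
$$\Hom_{\Lae}(\La\bfs(g)\ox\bft(g)\La,\La)\iso \bfs(g)\La\bft(g).$$
This turns the cochain complex into a complex of finite-dimensional vector spaces
$$\textstyle\bigoplus_{g\in\cG^n}\bfs(g)\La\bft(g).$$
Since only $\dim\HH^2$ is needed, it suffices to know $\cG^0,\cG^1,\cG^2$ and $\cG^3$, and the maps $\partial^1,\partial^2,\partial^3$.

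\textbf{Step 1: $\HH^0$.} Here $\HH^0=Z(\La_\la)$, which we compute directly. It contains the identity and the socle elements
$$\al_1\al_2\al_3\al_4,\quad \al_2\al_3\al_4\al_1,\quad \al_3\al_4\al_1\al_2,\quad \al_4\al_1\al_2\al_3,\quad \be_2\be_3\be_1.$$
Each of these is killed by every arrow on both sides, so it is central. We then check that no other combination of nontrivial cycles, such as $\be_1\be_2\be_3$ (which already equals $\al_1\al_2\al_3\al_4$), or of radical elements of $e_i\La e_i$ is central. This gives dimension $6$.

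\textbf{Step 2: the sets $\cG^n$.} We take the seven relations as $\cG^2$, written as uniform elements:
\begin{itemize}
\item the four zero relations $\al_4\be_1$, $\be_3\al_1$, $\al_3\be_3$, $\be_2\al_4$;
\item the monomial $\al_2\al_3\al_4\al_1\al_2$;
\item the two commutativity relations $\al_1\al_2\al_3\al_4-\be_1\be_2\be_3$ and $\al_4\al_1\al_2\al_3-\la\be_3\be_1\be_2$.
\end{itemize}
One should check minimality; for instance $\al_2\al_3\al_4\al_1\al_2$ may not follow from the others. We then build $\cG^3$ by the Green--Solberg--Zacharia overlap procedure, computing the third term of the minimal resolution of each simple module $S_i$. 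The number of elements of $\cG^3$ starting at $i$ is $\dim\Ext^3(S_i,\cdot)$. For a weakly symmetric algebra of this type, the minimal resolutions are periodic-like with small terms, so we compute $\Omega^3 S_i$ from the explicit projectives in Definition~\ref{dfn:La_la}. The equations in (b) then give the coefficients $r_h$, and Theorem~\ref{thm:ext-GS} gives $\partial^1,\partial^2,\partial^3$ explicitly. Here $\partial^1$ is the usual map $e\ox e\mapsto$ the difference of the two idempotent tensors along each arrow, and $\partial^2$ is the standard "derivative" of each relation.

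\textbf{Step 3: ranks.} Count the dimensions of the cochain spaces
$$C^n=\textstyle\bigoplus_{g\in\cG^n}\bfs(g)\La\bft(g).$$
For $C^0$ we get $\sum_i\dim e_i\La e_i=2+2+2+2+2=10$. The rank of $\delta^0\colon C^0\to C^1$ equals $\dim C^0-\dim\HH^0=4$. For $C^1$ we sum $\dim \bfs(a)\La\bft(a)$ over arrows. Then
$$\dim\HH^1=\dim\ker\delta^1-4 \quad\text{and}\quad \dim\HH^2=\dim\ker\delta^2-\rank\delta^1.$$
We compute $\delta^1$ by evaluating a derivation-type cochain $(x_a)_{a\in\cG^1}$ on each relation. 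The $\la$-dependence enters only through the relation $\al_4\al_1\al_2\al_3-\la\be_3\be_1\be_2$. The key point is a specific cochain: for example, the one scaling all $\al$'s by weights $w_i$ and all $\be$'s by weights $v_j$. On the two commutativity relations it yields, as a consequence, $(\sum w-\sum v)$ times socle elements; these are multiplied by $1$ and by $\la$ respectively, in the two corners $e_1\La e_1$ and $e_4\La e_4$. Hence the image of $\delta^1$ in the socle part of $C^2$ has one more dimension when $\la\neq1$ than when $\la=1$. Together with $\ker\delta^2$, which we expect to be independent of $\la$ since $\La_\la/\soc\La_\la$ is independent of $\la$, this gives the jump from $2$ to $1$.

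The main obstacle is computing $\delta^2\colon C^2\to C^3$. This requires $\cG^3$ and the explicit $r_h$, and carefully tracking which parallel paths survive in $\La_\la$, including the $\la$-twisted socle identifications at vertex $4$. One must make sure that $\ker\delta^2$ does not itself depend on $\la$. I would first try to avoid the full $\cG^3$ by using only the components of $\delta^2$ landing in nonzero spaces $\bfs(g)\La\bft(g)$. I would also cross-check with the Euler-characteristic-style consistency $\dim\ker\delta^1=\dim\HH^1+\rank\delta^0$, and with the known value for $\la=1$ coming from trivial extensions.
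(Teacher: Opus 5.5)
\textbf{There is a genuine gap: you place the $\la$-dependence in the wrong map, and the step that actually carries it is never computed.}

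Parts of your plan match the paper. The framework is the same: the sets $\cG^n$, the bimodule resolution justified by Theorem~\ref{thm:ext-GS}, and cochains identified with $\bigoplus_{g\in\cG^n}\bfs(g)\La\bft(g)$. Your Step~1 is also correct: the center is $\k 1$ plus the five socle elements, so $\rank\delta^0=4$. But the ranks of $\delta^1$ and $\delta^2$ are not carried out.

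Your mechanism for the $\la$-dependence fails. In $\La_\la$ one has $\la\be_3\be_1\be_2=\al_4\al_1\al_2\al_3$. Hence your weight cochain sends the two commutativity relations to $(\sum w-\sum v)\al_1\al_2\al_3\al_4$ and $(\sum w-\sum v)\al_4\al_1\al_2\al_3$, and the $\la$ cancels. The same cancellation happens for every $1$-cochain: in the paper the rows of $M_2$ for $f^2_{1,2}$ and $f^2_{5,2}$ coincide, and $\rank\delta^1=3$ for all $\la$. Even with your coefficients $1$ and $\la$, one cochain yields one vector, so no rank jump would follow.

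Your plan also contradicts the statement itself. Since $\dim C^1=9$ and $\rank\delta^0=4$ do not depend on $\la$, a $\la$-dependent $\rank\delta^1$ would make $\dim\HH^1=9-\rank\delta^1-4$ depend on $\la$. That is incompatible with $\dim\HH^1=2$.

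\textbf{Where the $\la$-dependence actually sits.} It is in $\ker\delta^2$, which is exactly the piece you expected to be constant and hoped to avoid computing. Nothing forces $\ker\delta^2$ to depend only on $\La_\la/\soc\La_\la$. The scalar enters $\partial^3$ through the overlaps $g^3_2=\la g^2_1\be_1\be_2$ and $g^3_6=g^2_5\be_3+\la g^2_6\al_2\al_3\al_4$.

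Concretely, take the $2$-cochain $f$ with $f(g^2_1)=a e_1$, $f(g^2_2)=b\al_2$, $f(g^2_5)=c e_4$, and $0$ elsewhere. Then $f\circ\partial^3$ has the following components:
\begin{itemize}
\item $(a-b)\al_1\al_2$ at $g^3_1$;
\item $(\la a-c)\be_1\be_2$ at $g^3_2$;
\item $(b-c)\al_2\al_3$ at $g^3_3$;
\item $(c-a)\al_4$ at $g^3_5$;
\item $(c-\la a)\be_3$ at $g^3_6$.
\end{itemize}
So $f$ is a cocycle if and only if $a=b=c$ and $(\la-1)a=0$.

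Combined with the $\la$-independent part (which contributes rank $2$), this gives $\rank\delta^2=5$ for $\la\ne 1$ and $\rank\delta^2=4$ for $\la=1$. Hence $\dim\HH^2=9-\rank\delta^2-3$, which is $1$ or $2$ accordingly.

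To finish, you must construct $\cG^3$ and $\partial^3$ explicitly and compute this rank. Step~3 as written sends you looking for the jump in $\delta^1$. Had you found one there, it would also have given a wrong value for $\HH^1$.
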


It is known that Hochschild cohomology is invariant under derived equivalence.
Using the above dimensional formula, 
we have the following consequence. 

\begin{thm}
\label{thm:not-der-eq}
Let $\la \in \k^\times$.
If $\la \ne 1$, 
then $\La_{\la}$ is not derived equivalent to $\La_{1}$. 
\end{thm}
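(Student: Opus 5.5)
The plan is to deduce the statement directly from Theorem \ref{thm:dim-HH} together with the derived invariance of Hochschild cohomology (\cite{rickard1991derived}). We argue by contradiction: fix $\la \in \k^\times$ with $\la \ne 1$, and suppose that $\La_\la$ and $\La_1$ are derived equivalent, i.e.\ that $D^b(\mod \La_\la)$ and $D^b(\mod \La_1)$ are equivalent as triangulated categories.

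By Rickard's theorem, any such equivalence for finite-dimensional $\k$-algebras is realized by a two-sided tilting complex. This complex induces isomorphisms of graded $\k$-algebras
$$
\HH^{\ast}(\La_\la) \iso \HH^{\ast}(\La_1).
$$
In particular, for each $n \ge 0$ we get a $\k$-linear isomorphism $\HH^n(\La_\la) \iso \HH^n(\La_1)$, so $\dim_\k \HH^n(\La_\la) = \dim_\k \HH^n(\La_1)$. Here we use that $\k$ is a field and all the algebras involved are finite-dimensional over $\k$. Taking $n = 2$, Theorem \ref{thm:dim-HH} gives $\dim_\k \HH^2(\La_\la) = 1$ because $\la \ne 1$, while $\dim_\k \HH^2(\La_1) = 2$. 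This is a contradiction, so $\La_\la$ is not derived equivalent to $\La_1$.

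The cases $n = 0, 1$ give no information, since the dimensions agree there; the argument rests entirely on $\HH^2$. Thus the real obstacle lies in Theorem \ref{thm:dim-HH} itself. The difficult part there is computing $\HH^2$, which requires the Green--Snashall type minimal bimodule resolution from Appendix B through degree $3$, and checking that exactly one $2$-cocycle class, the one deforming the commutativity relation $\al_4\al_1\al_2\al_3 = \la \be_3\be_1\be_2$, becomes a coboundary precisely when $\la \ne 1$. Taking Theorem \ref{thm:dim-HH} as given, the present statement follows by the short argument above.
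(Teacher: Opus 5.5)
Your argument is correct and is exactly the paper's: Theorem~\ref{thm:dim-HH} gives $\dim_\k \HH^2(\La_\la)=1\neq 2=\dim_\k\HH^2(\La_1)$ for $\la\neq 1$, and Hochschild cohomology is invariant under derived equivalence by Rickard. Two small inaccuracies in your side remarks do not affect the proof. First, Rickard shows that derived equivalent algebras admit \emph{some} equivalence induced by a two-sided tilting complex, not that every equivalence is of this form. Second, in the paper's computation the drop in $\HH^2$ for $\la\neq 1$ comes from $\rank M_3$ rising from $4$ to $5$: a cocycle of $\La_1$ ceases to be a cocycle, while the coboundary space has dimension $\rank M_2=3$ for every $\la$, so no class ``becomes a coboundary''.
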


\subsection*{Proof of Theorem \ref{thm:dim-HH}}
For simplicity we fix an arbitrary $\la \in \k^\times$, and simply put
$I:= I_\la$ and $\La:= \La_\la = \k \sfQ/I$.
To calculate the lower dimensional Hochschild cohomology group of $\La$, 
we define the set $\cG^{n}$ $(n=0,1,2,3)$ in \cite{green2001minimal}: 
\begin{align*}
\cG^{0}:= & \sfQ_{0}
        =\{g_{i}^{0}:=e_{i}\mid i=1,2,3,4,5\}, \\
\cG^{1}:= & \sfQ_{1}
          =\{
          g_{1}^{1}:=g_{1}^{0}\al_{1}=\al_{1},\,
          g_{2}^{1}:=g_{2}^{0}\al_{2}=\al_{2},\,\\ &
          g_{3}^{1}:=g_{3}^{0}\al_{3}=\al_{3},\,
          g_{4}^{1}:=g_{4}^{0}\al_{4}=\al_{4},\,\\ &
          g_{5}^{1}:=g_{1}^{0}\be_{1}=\be_{1},\,
          g_{6}^{1}:=g_{5}^{0}\be_{2}=\be_{2},\,
          g_{7}^{1}:=g_{4}^{0}\be_{3}=\be_{3}
          \}, \\
\cG^{2}:=&
         \{
         g_{1}^{2}:=g_{1}^{1}\al_{2}\al_{3}\al_{4}-g_{5}^{1}\be_{2}\be_{3}
                   =\al_{1}\al_{2}\al_{3}\al_{4}-\be_{1}\be_{2}\be_{3},\,\\ &
         g_{2}^{2}:=g_{2}^{1}\al_{3}\al_{4}\al_{1}\al_{2}
                   =\al_{2}\al_{3}\al_{4}\al_{1}\al_{2},\,\\ &
         g_{3}^{2}:=g_{3}^{1}\be_{3}=\al_{3}\be_{3},\,
         g_{4}^{2}:=g_{4}^{1}\be_{1}=\al_{4}\be_{1},\, \\ &
          g_{5}^{2}:=g_{4}^{1}\al_{1}\al_{2}\al_{3}-\la g_{7}^{1}\be_{1}\be_{2}
                   =\al_{4}\al_{1}\al_{2}\al_{3}-\la\be_{3}\be_{1}\be_{2},\,\\ &
         g_{6}^{2}:=g_{7}^{1}\al_{1}=\be_{3}\al_{1},\,
         g_{7}^{2}:=g_{6}^{1}\al_{4}=\be_{2}\al_{4},\,
         \}, \\
\cG^{3}:=&
         \{
         g_{1}^{3}:=g_{1}^{2}\al_{1}\al_{2},\,
         g_{2}^{3}:=\la g_{1}^{2}\be_{1}\be_{2},\,
         g_{3}^{3}:=g_{2}^{2}\al_{3},\,\\ &
         g_{4}^{3}:=g_{3}^{2}\al_{1},\,
         g_{5}^{3}:=-g_{4}^{2}\be_{2}\be_{3}+g_{5}^{2}\al_{4},\,
         g_{6}^{3}:=g_{5}^{2}\be_{3}+\la g_{6}^{2}\al_{2}\al_{3}\al_{4},\,
         g_{7}^{3}:=g_{7}^{2}\be_{1}
         \}. 
\end{align*}
Using these sets $\cG^{n}$ ($n=0,1,2,3$), 
we give a starting part of a minimal projective resolution 
$(P\up,d\up)$ of ($\La/\rad \La$) 
as a right $\La$-module. 
For each $n=0,1,2,3$, we define the projective 
$\La$-module $P^{n}$
by 
$$
    P^{n}:=\bigoplus_{g \in \cG^{n}}\bft(g) \La,
$$
and the map $d^{n}$: $P^{n}\rightarrow P^{n-1}$ 
as follows: 
  We define $d^{0}:P^{0}\rightarrow \La/\rad \La$ to be 
      the canonical projection
  and $d^{1}:P^{1}\rightarrow P^{0}$ 
  to be the right $\La$-homomorphism determined by 
\begin{align*}
             &\bft(g_{1}^{1})
             \longmapsto \bft(g_{1}^{0})\al_{1}, \quad
             \bft(g_{2}^{1})
             \longmapsto \bft(g_{2}^{0})\al_{2}, \quad
             \bft(g_{3}^{1})
             \longmapsto \bft(g_{3}^{0})\al_{3}, \quad \\
             & \bft(g_{4}^{1})
             \longmapsto \bft(g_{4}^{0})\al_{4},
             \bft(g_{5}^{1})
             \longmapsto \bft(g_{1}^{0})\be_{1}, \quad
             \bft(g_{6}^{1})
             \longmapsto \bft(g_{5}^{0})\be_{2}, \quad
             \bft(g_{7}^{1})
             \longmapsto \bft(g_{4}^{0})\be_{3}. 
\end{align*}
\noindent
We define further $d^{2}:P^{2}\rightarrow P^{1}$ 
to be the right $\La$-homomorphism determined by 
 \begin{align*}
             &\bft(g_{1}^{2})
             \longmapsto \bft(g_{1}^{1})\al_{2}\al_{3}\al_{4}
             -\bft(g_{5}^{1})\be_{2}\be_{3}, \quad
             \bft(g_{2}^{2})
             \longmapsto \bft(g_{2}^{1})\al_{3}\al_{4}\al_{1}\al_{2}, \quad \\
             & \bft(g_{3}^{2}) 
             \longmapsto \bft(g_{3}^{1})\be_{3}, 
             \bft(g_{4}^{2})
             \longmapsto \bft(g_{4}^{1})\be_{1},\quad
             \bft(g_{5}^{2})
             \longmapsto \bft(g_{4}^{1})\al_{1}\al_{2}\al_{3}
             -\la \bft(g_{7}^{1})\be_{1}\be_{2}, \quad \\
             & \bft(g_{6}^{2})
             \longmapsto \bft(g_{7}^{1})\al_{1}, \quad
             \bft(g_{7}^{2})
             \longmapsto \bft(g_{6}^{1})\al_{4},
\end{align*}
\noindent
and $d^{3}:P^{3}\rightarrow P^{2}$ to be the right $\La$-homomorphism determined by 
 \begin{align*}
             &\bft(g_{1}^{3})
             \longmapsto \bft(g_{1}^{2})\al_{1}\al_{2}, \quad
             \bft(g_{2}^{3})
             \longmapsto \la \bft(g_{1}^{2})\be_{1}\be_{2}, \quad \\
             & \bft(g_{3}^{3})
             \longmapsto \bft(g_{2}^{2})\al_{3}, \quad 
             \bft(g_{4}^{3})
             \longmapsto \bft(g_{3}^{2})\al_{1},
             \bft(g_{5}^{3})
             \longmapsto -\bft(g_{4}^{2})\be_{2}\be_{3}
             +\bft(g_{5}^{2})\al_{4}, \quad \\
             & \bft(g_{6}^{3})
             \longmapsto \bft(g_{5}^{2})\be_{3}
             +\la \bft(g_{6}^{2})\al_{2}\al_{3}\al_{4}, \quad
             \bft(g_{7}^{3})
             \longmapsto \bft(g_{7}^{2})\be_{1}. 
\end{align*}

Moreover, using these sets $\cG^{n}$ ($n=0,1,2,3$), 
we also give the beginning of a minimal projective resolution 
$(Q\up,\partial\up)$ of $\La$ 
as a right $\La^{\rm e}$-module. 
First, we give the definition of the projective 
$\La^{\rm e}$-module $Q^{n}$ for $n=0,1,2,3$. 
The projective $\La^{\rm e}$-module $Q^{n}$ is defined by 
$$
    Q^{n}:=\bigoplus_{g \in \cG^{n}}\La \bfs(g)\otimes \bft(g)\La
    \quad\text{ for }n=0,1,2,3.
$$

Next, we define the map $\partial^{n}$: $Q^{n}\rightarrow Q^{n-1}$ 
as follows: 
  We define $\partial^{0}:Q^{0}\rightarrow \La$ to be the multiplication map
  and $\partial^{1}:Q^{1}\rightarrow Q^{0}$ 
  to be the $\La^{\rm e}$-homomorphism determined by 
 \begin{align*}
  \begin{cases}
             \bfs(g_{1}^{1})\otimes \bft(g_{1}^{1})
             \longmapsto (\bfs(g_{1}^{0})\otimes \bft(g_{1}^{0}))\al_{1}
             -\al_{1}(\bfs(g_{2}^{0})\otimes \bft(g_{2}^{0}))
             , \\
             \bfs(g_{2}^{1})\otimes \bft(g_{2}^{1})
             \longmapsto (\bfs(g_{2}^{0})\otimes \bft(g_{2}^{0}))\al_{2}
             -\al_{2}(\bfs(g_{3}^{0})\otimes \bft(g_{3}^{0}))
             , \\
             \bfs(g_{3}^{1})\otimes \bft(g_{3}^{1})
             \longmapsto (\bfs(g_{3}^{0})\otimes \bft(g_{3}^{0}))\al_{3}
             -\al_{3}(\bfs(g_{4}^{0})\otimes \bft(g_{4}^{0}))
             , \\
             \bfs(g_{4}^{1})\otimes \bft(g_{4}^{1})
             \longmapsto (\bfs(g_{4}^{0})\otimes \bft(g_{4}^{0}))\al_{4}
             -\al_{4}(\bfs(g_{1}^{0})\otimes \bft(g_{1}^{0}))
             , \\
             \bfs(g_{5}^{1})\otimes \bft(g_{5}^{1})
             \longmapsto (\bfs(g_{1}^{0})\otimes \bft(g_{1}^{0}))\be_{1}
             -\be_{1}(\bfs(g_{5}^{0})\otimes \bft(g_{5}^{0}))
             , \\
             \bfs(g_{6}^{1})\otimes \bft(g_{6}^{1})
             \longmapsto (\bfs(g_{5}^{0})\otimes \bft(g_{5}^{0}))\be_{2}
             -\be_{2}(\bfs(g_{4}^{0})\otimes \bft(g_{4}^{0}))
             , \\
             \bfs(g_{7}^{1})\otimes \bft(g_{7}^{1})
             \longmapsto (\bfs(g_{4}^{0})\otimes \bft(g_{4}^{0}))\be_{3}
             -\be_{3}(\bfs(g_{1}^{0})\otimes \bft(g_{1}^{0})).
 \end{cases}
\end{align*}
\noindent
Also, $\partial^{2}:Q^{2}\rightarrow Q^{1}$ 
is the $\La^{\rm e}$-homomorphism determined by 
\begin{align*}
 \begin{cases}
     \bfs(g_{1}^{2})\otimes \bft(g_{1}^{2})\longmapsto 
         &\al_{1}\al_{2}\al_{3}(\bfs(g_{4}^{1})\otimes \bft(g_{4}^{1}))
         +\al_{1}\al_{2}(\bfs(g_{3}^{1})\otimes \bft(g_{3}^{1}))\al_{4}\\
         &\quad+\al_{1}(\bfs(g_{2}^{1})\otimes \bft(g_{2}^{1}))\al_{3}\al_{4}
         +(\bfs(g_{1}^{1})\otimes \bft(g_{1}^{1}))\al_{2}\al_{3}\al_{4}
         )\\
         &\quad 
         -(\be_{1}\be_{2}(\bfs(g_{7}^{1})\otimes \bft(g_{7}^{1})
         +\be_{1}(\bfs(g_{6}^{1})\otimes \bft(g_{6}^{1})\be_{3} \\
        &\quad  +(\bfs(g_{5}^{1})\otimes \bft(g_{5}^{1})\be_{2}\be_{3}),\\
     \bfs(g_{2}^{2})\otimes \bft(g_{2}^{2})\longmapsto 
         &\al_{2}\al_{3}\al_{4}\al_{1}(\bfs(g_{2}^{1})\otimes \bft(g_{2}^{1}))
         +\al_{2}\al_{3}\al_{4}(\bfs(g_{1}^{1})\otimes \bft(g_{1}^{1}))\al_{2}\\
         &\quad+\al_{2}\al_{3}(\bfs(g_{4}^{1})\otimes \bft(g_{4}^{1}))\al_{1}\al_{2}
         +\al_{2}(\bfs(g_{3}^{1})\otimes \bft(g_{3}^{1}))\al_{4}\al_{1}\al_{2}\\
         &\quad+(\bfs(g_{2}^{1})\otimes \bft(g_{2}^{1}))\al_{3}\al_{4}\al_{1}\al_{2},\\
     \bfs(g_{3}^{2})\otimes \bft(g_{3}^{2})\longmapsto 
         &\al_{3}(\bfs(g_{7}^{1})\otimes \bft(g_{7}^{1}))
         +(\bfs(g_{3}^{1})\otimes \bft(g_{3}^{1}))\be_{3},\\
     \bfs(g_{4}^{2})\otimes \bft(g_{4}^{2})\longmapsto 
         &\al_{4}(\bfs(g_{5}^{1})\otimes \bft(g_{5}^{1}))
         +(\bfs(g_{4}^{1})\otimes \bft(g_{4}^{1})\be_{1}),\\
     \bfs(g_{5}^{2})\otimes \bft(g_{5}^{2})\longmapsto 
         &(\al_{4}\al_{1}\al_{2}(\bfs(g_{3}^{1})\otimes \bft(g_{3}^{1}))
         +\al_{4}\al_{1}(\bfs(g_{2}^{1})\otimes \bft(g_{2}^{1}))\al_{3} \\
         &+\al_{4}(\bfs(g_{1}^{1})\otimes \bft(g_{1}^{1}))\al_{2}\al_{3} 
         +(\bfs(g_{4}^{1})\otimes \bft(g_{4}^{1}))\al_{1}\al_{2}\al_{3})\\
         &\quad 
         -\la(\be_{3}\be_{1}(\bfs(g_{6}^{1}))\otimes \bft(g_{6}^{1})
         +\be_{3}(\bfs(g_{5}^{1}))\otimes \bft(g_{5}^{1})\be_{2} \\
         &\quad +(\bfs(g_{7}^{1})\otimes \bft(g_{7}^{1}))\be_{1}\be_{2}),\\
      \bfs(g_{6}^{2})\otimes \bft(g_{6}^{2})\longmapsto 
         &\be_{3}(\bfs(g_{1}^{1})\otimes \bft(g_{1}^{1}))
         +(\bfs(g_{7}^{1})\otimes \bft(g_{7}^{1}))\al_{1},\\
      \bfs(g_{7}^{2})\otimes \bft(g_{7}^{2})\longmapsto 
         &\be_{2}(\bfs(g_{4}^{1})\otimes \bft(g_{4}^{1}))
         +(\bfs(g_{6}^{1})\otimes \bft(g_{6}^{1}))\al_{4}. 
 \end{cases}
\end{align*}
\noindent
Finally, $\partial^{3}:Q^{3}\rightarrow Q^{2}$ 
is the $\La^{\rm e}$-homomorphism determined by 
\begin{align*}
\begin{cases}
     \bfs(g_{1}^{3})\otimes \bft(g_{1}^{3})\longmapsto 
         & (\bfs(g_{1}^{2}))\otimes \bft(g_{1}^{2})\al_{1}\al_{2}
         -(\al_{1}(\bfs(g_{2}^{2})\otimes \bft(g_{2}^{2})) \\
         & \quad +(\be_{1}\be_{2}(\bfs(g_{6}^{2})\otimes \bft(g_{6}^{2}))\al_{2}),\\
     \bfs(g_{2}^{3})\otimes \bft(g_{2}^{3})\longmapsto 
         &\be_{1}(\bfs(g_{7}^{2})\otimes \bft(g_{7}^{2}))\al_{1}\al_{2}\al_{3}
         -\la \al_{1}\al_{2}\al_{3}(\bfs(g_{4}^{2})\otimes \bft(g_{4}^{2}))\be_{2}\\
         &\quad+\la(\bfs(g_{1}^{2})\otimes \bft(g_{1}^{2}))\be_{1}\be_{2}
         -\be_{1}\be_{2}(\bfs(g_{5}^{2})\otimes \bft(g_{5}^{2})),\\
     \bfs(g_{3}^{3})\otimes \bft(g_{3}^{3})\longmapsto 
         &(\bfs(g_{2}^{2})\otimes \bft(g_{2}^{2}))\al_{3}
         -\al_{2}\al_{3}(\bfs(g_{5}^{2})\otimes \bft(g_{5}^{2})) \\
         & \quad -\la \al_{2}(\bfs(g_{3}^{2})\otimes \bft(g_{3}^{2}))\be_{1}\be_{2},\\
     \bfs(g_{4}^{3})\otimes \bft(g_{4}^{3})\longmapsto 
         &(\bfs(g_{3}^{2})\otimes \bft(g_{3}^{2}))\al_{1}
         -\al_{3}(\bfs(g_{6}^{2})\otimes \bft(g_{6}^{2})),\\
     \bfs(g_{5}^{3})\otimes \bft(g_{5}^{3})\longmapsto
         &-\al_{4}(\bfs(g_{1}^{2})\otimes \bft(g_{1}^{2}))
         -(\bfs(g_{4}^{2})\otimes \bft(g_{4}^{2}))\be_{2}\be_{3}
         +(\bfs(g_{5}^{2})\otimes \bft(g_{5}^{2}))\al_{4}\\
         &\quad+\la \be_{3}\be_{1}(\bfs(g_{7}^{2})\otimes \bft(g_{7}^{2})),\\
      \bfs(g_{6}^{3})\otimes \bft(g_{6}^{3})\longmapsto 
         &-\la \be_{3}(\bfs(g_{1}^{2})\otimes \bft(g_{1}^{2}))
         -\al_{4}\al_{1}\al_{2}(\bfs(g_{3}^{2})\otimes \bft(g_{3}^{2}))\\
         &\quad +(\bfs(g_{5}^{2})\otimes \bft(g_{5}^{2})\be_{3}+\la(\bfs(g_{6}^{2})\otimes \bft(g_{6}^{2}))\al_{2}\al_{3}\al_{4},\\
      \bfs(g_{7}^{3})\otimes \bft(g_{7}^{3})\longmapsto 
         &-\be_{2}(\bfs(g_{4}^{2})\otimes \bft(g_{4}^{2}))
         +(\bfs(g_{7}^{2})\otimes \bft(g_{7}^{2}))\be_{1}.
\end{cases}
\end{align*}

By direct computations, we see that the composite 
$\partial^{n}\partial^{n+1}$ is zero for $n=0,1,2$. 
Therefore,
\begin{equation}
\label{eq:proj-resol-bimod}
\cdots \longrightarrow 
Q^{3}\stackrel{\partial^{3}}{\longrightarrow}
Q^{2}\stackrel{\partial^{2}}{\longrightarrow}
Q^{1}\stackrel{\partial^{1}}{\longrightarrow}
Q^{0}\stackrel{\partial^{0}}{\longrightarrow} 
\La_{{\La^{\rm e}}}
\longrightarrow 0
\end{equation}
is a part of a complex of $\La^{\rm e}$-modules.
Since $(\La/\rad\La) \otimes_\La (Q\up, \partial\up) \iso (P\up, d\up)$ is acyclic,
\eqref{eq:proj-resol-bimod}
is a part of a projective $\La^{\rm e}$-resolution of $\La$ 
by Theorem \ref{thm:ext-GS} (an extended version of \cite[Theorem 2.9]{green2004projective})

%
Moreover, each differential has image contained in the radical of its target projective bimodule. Hence the resulting projective bimodule resolution is minimal.

Applying the functor $ \widehat{(\blank)} :=\Hom_{\La^{\rm e}}(\blank,\La)$ 
to the resolution $(Q\up,\partial\up)$, 
we have the Hochschild complex 
$$
  0 \longrightarrow \widehat{Q^{0}} \stackrel{\widehat{\partial^{1}}}{\longrightarrow} 
  \widehat{Q^{1}} \stackrel{\widehat{\partial^{2}}}{\longrightarrow} 
  \widehat{Q^{2}} \stackrel{\widehat{\partial^{3}}}{\longrightarrow} 
  \widehat{Q^{3}} \longrightarrow \cdots. 
$$
For each $n=0,1,2,3$, 
the $\k$-space $\bigoplus_{g^{n}\in\cG^{n}} \bfs(g^{n})\La \bft(g^{n})$ 
is computed as follows: 
\begin{enumerate}[(i)]
  \item  If $n=0$, then it is given by
\begin{multline*}
         \langle e_{1}, \al_{1}\al_{2}\al_{3}\al_{4} \rangle_{\k} 
         \,\oplus\,
         \langle e_{2}, \al_{2}\al_{3}\al_{4}\al_{1} \rangle_{\k}\\
         \,\oplus\,
         \langle e_{3}, \al_{3}\al_{4}\al_{1}\al_{2} \rangle_{\k}
         \,\oplus\,
         \langle e_{4}, \al_{4}\al_{1}\al_{2}\al_{3} \rangle_{\k}
         \,\oplus\,
         \langle e_{5}, \be_{2}\be_{3}\be_{1} \rangle_{\k},
\end{multline*}
         which has $10$-dimension. 
  \item  If $n=1$, then it is given by
         $$
         \langle \al_{1} \rangle_{\k}
         \,\oplus\,
         \langle \al_{2} \rangle_{\k}
         \,\oplus\,
         \langle \al_{3} \rangle_{\k}
         \,\oplus\,
         \langle \al_{4},\,\be_{3} \rangle_{\k}
         \,\oplus\,
         \langle \be_{1} \rangle_{\k}
         \,\oplus\,
         \langle \be_{2} \rangle_{\k}
         \,\oplus\,
         \langle \al_{4},\, \be_{3} \rangle_{\k},
         $$
         which has $9$-dimension. 
    \item  
    If $n=2$, then it is given by
\begin{multline*}
         \langle e_{1}, \al_{1}\al_{2}\al_{3}\al_{4}(=\be_{1}\be_{2}\be_{3}) \rangle_{\k}
         \,\oplus\,
         \langle \al_{2} \rangle_{\k}
         \,\oplus\,
         \langle \al_{3}\al_{4} \rangle_{\k}
         \,\oplus\,
         \langle \be_{3}\be_{1} \rangle_{\k}\\
         \,\oplus\,
         \langle  e_{4}, \al_{4}\al_{1}\al_{2}\al_{3}(=\la\be_{3}\be_{1}\be_{2}) \rangle_{\k}
         \,\oplus\,
         \langle \al_{4}\al_{1} \rangle_{\k}
         \,\oplus\,
         \langle \be_{2}\be_{3} \rangle_{\k},
\end{multline*}
         which has $9$-dimension. 
    \item
    If $n=3$, then it is given by
\begin{multline*}
         \langle \al_{1}\al_{2} \rangle_{\k}
         \,\oplus\,
         \langle \al_{1}\al_{2}\al_{3}, \be_{1}\be_{2} \rangle_{\k}
         \,\oplus\,
         \langle \al_{2}\al_{3} \rangle_{\k}
         \,\oplus\,
         \langle  \al_{3}\al_{4}\al_{1} \rangle_{\k}\\
         \,\oplus\,
         \langle \al_{4}, \be_{3} \rangle_{\k}
         \,\oplus\,
         \langle \al_{4}, \be_{3} \rangle_{\k}
         \,\oplus\,
         \langle e_{5},\be_{2}\be_{3}\be_{1} \rangle_{\k},
\end{multline*}
         which has $11$-dimension. 
\end{enumerate}
Moreover, for $n=0,1,2,3$, 
we define right $\La^{\rm e}$-module homomorphisms 
by the following equations: 
\begin{enumerate}[(i)]
 \item If $n=0$, then we define $f_{i,j}^{0}$: $Q^{0}\rightarrow \La$
        ($i=1,\ldots,5$, $j=1,2$, $l=1,\ldots,5$) by 
        \begin{itemize}
         \item 
         $
         f_{i,1}^{0}(\bfs(g_{l}^{0})\otimes \bft(g_{l}^{0}))=
          \begin{cases}
           \begin{array}{ll}
            e_{i}& \text{if }l=i,\\
            0    & \text{otherwise}. 
           \end{array}
          \end{cases}
          $
          \item 
          $
          f_{1,2}^{0}(\bfs(g_{l}^{0})\otimes \bft(g_{l}^{0}))=
          \begin{cases}
           \begin{array}{ll}
            \al_{1}\al_{2}\al_{3}\al_{4}(=\be_{1}\be_{2}\be_{3})& \text{if }l=1,\\
            0    & \text{otherwise}. 
           \end{array}
          \end{cases}
          $
          \item 
          $
          f_{2,2}^{0}(\bfs(g_{l}^{0})\otimes \bft(g_{l}^{0}))=
          \begin{cases}
           \begin{array}{ll}
           \al_{2}\al_{3}\al_{4}\al_{1} & \text{if }l=2,\\
            0    & \text{otherwise}. 
           \end{array}
          \end{cases}
          $
          \item 
          $
          f_{3,2}^{0}(\bfs(g_{l}^{0})\otimes \bft(g_{l}^{0}))=
          \begin{cases}
           \begin{array}{ll}
            \al_{3}\al_{4}\al_{1}\al_{2} & \text{if }l=3,\\
            0    & \text{otherwise}. 
           \end{array}
          \end{cases}
          $
          \item 
          $
          f_{4,2}^{0}(\bfs(g_{l}^{0})\otimes \bft(g_{l}^{0}))=
          \begin{cases}
           \begin{array}{ll}
           \al_{4}\al_{1}\al_{2}\al_{3}(=\la \be_{3}\be_{1}\be_{2}) & \text{if }l=4,\\
            0    & \text{otherwise}. 
           \end{array}
          \end{cases}
          $
          \item 
          $
          f_{5,2}^{0}(\bfs(g_{l}^{0})\otimes \bft(g_{l}^{0}))=
          \begin{cases}
           \begin{array}{ll}
            \be_{2}\be_{3}\be_{1} & \text{if }l=5,\\
            0    & \text{otherwise}. 
           \end{array}
          \end{cases}
          $
          \end{itemize}
 \item If $n=1$, then we define $f_{i,1}^{1},\,f_{j,2}^{1}$: $Q^{1}\rightarrow \La$ 
        ($i=1,\ldots,7$, $j=4,7$, $l=1,\ldots,7$) by 
        \begin{itemize}
         \item 
         $
         f_{1,1}^{1}(\bfs(g_{l}^{1})\otimes \bft(g_{l}^{1}))=
          \begin{cases}
           \begin{array}{ll}
            \al_{1}& \text{if }l=1,\\
            0    & \text{otherwise}. 
           \end{array}
          \end{cases}
          $
          \item 
          $
          f_{2,1}^{1}(\bfs(g_{l}^{1})\otimes \bft(g_{l}^{1}))=
          \begin{cases}
           \begin{array}{ll}
            \al_{2}& \text{if }l=2,\\
            0    & \text{otherwise}. 
           \end{array}
          \end{cases}
          $
          \item 
          $
          f_{3,1}^{1}(\bfs(g_{l}^{1})\otimes \bft(g_{l}^{1}))=
          \begin{cases}
           \begin{array}{ll}
            \al_{3} & \text{if }l=3,\\
            0    & \text{otherwise}. 
           \end{array}
          \end{cases}
          $
          \item 
          $
          f_{4,1}^{1}(\bfs(g_{l}^{1})\otimes \bft(g_{l}^{1}))=
          \begin{cases}
           \begin{array}{ll}
            \al_{4} & \text{if }l=4,\\
            0    & \text{otherwise}. 
           \end{array}
          \end{cases}
          $
          \item 
          $
          f_{4,2}^{1}(\bfs(g_{l}^{1})\otimes \bft(g_{l}^{1}))=
          \begin{cases}
           \begin{array}{ll}
           \be_{3} & \text{if }l=4,\\
            0    & \text{otherwise}. 
           \end{array}
          \end{cases}
          $
          \item 
          $
          f_{5,1}^{1}(\bfs(g_{l}^{1})\otimes \bft(g_{l}^{1}))=
          \begin{cases}
           \begin{array}{ll}
            \be_{1} & \text{if }l=5,\\
            0    & \text{otherwise}. 
           \end{array}
          \end{cases}
          $
          \item 
          $
          f_{6,1}^{1}(\bfs(g_{l}^{1})\otimes \bft(g_{l}^{1}))=
          \begin{cases}
           \begin{array}{ll}
            \be_{2} & \text{if }l=6,\\
            0    & \text{otherwise}. 
           \end{array}
          \end{cases}
          $
          \item 
          $
          f_{7,1}^{1}(\bfs(g_{l}^{1})\otimes \bft(g_{l}^{1}))=
          \begin{cases}
           \begin{array}{ll}
            \al_{4} & \text{if }l=7,\\
            0    & \text{otherwise}. 
           \end{array}
          \end{cases}
          $
          \item 
          $
          f_{7,2}^{1}(\bfs(g_{l}^{1})\otimes \bft(g_{l}^{1}))=
          \begin{cases}
           \begin{array}{ll}
            \be_{3} & \text{if }l=7,\\
            0    & \text{otherwise}. 
           \end{array}
          \end{cases}
          $
          \end{itemize}
 \item If $n=2$, then we define $f_{i,1}^{2},\,f_{j,2}^{2}$: 
        $Q^{2}\rightarrow \La$ 
        ($i=1,\ldots,7$, $j=1,5$, $l=1,\ldots,7$) by 
        \begin{itemize}
         \item 
         $
         f_{1,1}^{2}(\bfs(g_{l}^{2})\otimes \bft(g_{l}^{2}))=
          \begin{cases}
           \begin{array}{ll}
            e_{1}& \text{if }l=1,\\
            0    & \text{otherwise}. 
           \end{array}
          \end{cases}
          $
          \item 
          $
          f_{1,2}^{2}(\bfs(g_{l}^{2})\otimes \bft(g_{l}^{2}))=
          \begin{cases}
           \begin{array}{ll}
            \al_{1}\al_{2}\al_{3}\al_{4} & \text{if }l=1,\\
            0    & \text{otherwise}. 
           \end{array}
          \end{cases}
          $
          \item 
          $
          f_{2,1}^{2}(\bfs(g_{l}^{2})\otimes \bft(g_{l}^{2}))=
          \begin{cases}
           \begin{array}{ll}
            \al_{2} & \text{if }l=2,\\
            0    & \text{otherwise}. 
           \end{array}
          \end{cases}
          $
          \item 
          $
          f_{3,1}^{2}(\bfs(g_{l}^{2})\otimes \bft(g_{l}^{2}))=
          \begin{cases}
           \begin{array}{ll}
            \al_{3}\al_{4} & \text{if }l=3,\\
            0    & \text{otherwise}. 
           \end{array}
          \end{cases}
          $
          \item 
          $
          f_{4,1}^{2}(\bfs(g_{l}^{2})\otimes \bft(g_{l}^{2}))=
          \begin{cases}
           \begin{array}{ll}
           \be_{3}\be_{1} & \text{if }l=4, \\
            0    & \text{otherwise}. 
           \end{array}
          \end{cases}
          $
          \item 
          $
          f_{5,1}^{2}(\bfs(g_{l}^{2})\otimes \bft(g_{l}^{2}))=
          \begin{cases}
           \begin{array}{ll}
            e_{4} & \text{if }l=5,\\
            0    & \text{otherwise}. 
           \end{array}
          \end{cases}
          $
          \item 
          $
          f_{5,2}^{2}(\bfs(g_{l}^{2})\otimes \bft(g_{l}^{2}))=
          \begin{cases}
           \begin{array}{ll}
            \al_{4}\al_{1}\al_{2}\al_{3} & \text{if }l=5,\\
            0    & \text{otherwise}. 
           \end{array}
          \end{cases}
          $
          \item 
          $
          f_{6,1}^{2}(\bfs(g_{l}^{2})\otimes \bft(g_{l}^{2}))=
          \begin{cases}
           \begin{array}{ll}
            \al_{4}\al_{1} & \text{if }l=6,\\
            0    & \text{otherwise}. 
           \end{array}
          \end{cases}
          $
          \item 
          $
          f_{7,1}^{2}(\bfs(g_{l}^{2})\otimes \bft(g_{l}^{2}))=
          \begin{cases}
           \begin{array}{ll}
            \be_{2}\be_{3} & \text{if }l=7,\\
            0    & \text{otherwise}. 
           \end{array}
          \end{cases}
          $
          \end{itemize}
 \item If $n=3$, then we define $f_{i,1}^{3},\,f_{j,2}^{3}$: 
       $Q^{3}\rightarrow \La$ 
        ($i=1,\ldots,7$, $j=2,5,6,7$, $l=1,\ldots,7$) by 
        \begin{itemize}
         \item 
         $
         f_{1,1}^{3}(\bfs(g_{l}^{3})\otimes \bft(g_{l}^{3}))=
          \begin{cases}
           \begin{array}{ll}
            \al_{1}\al_{2}& \text{if }l=1,\\
            0    & \text{otherwise}. 
           \end{array}
          \end{cases}
          $
          \item 
          $
          f_{2,1}^{3}(\bfs(g_{l}^{3})\otimes \bft(g_{l}^{3}))=
          \begin{cases}
           \begin{array}{ll}
            \al_{1}\al_{2}\al_{3} & \text{if }l=2,\\
            0    & \text{otherwise}. 
           \end{array}
          \end{cases}
          $
          \item 
          $
          f_{2,2}^{3}(\bfs(g_{l}^{3})\otimes \bft(g_{l}^{3}))=
          \begin{cases}
           \begin{array}{ll}
            \be_{1}\be_{2} & \text{if }l=2,\\
            0    & \text{otherwise}. 
           \end{array}
          \end{cases}
          $
          \item 
          $
          f_{3,1}^{3}(\bfs(g_{l}^{3})\otimes \bft(g_{l}^{3}))=
          \begin{cases}
           \begin{array}{ll}
            \al_{2}\al_{3} & \text{if }l=3,\\
            0    & \text{otherwise}. 
           \end{array}
          \end{cases}
          $
          \item 
          $
          f_{4,1}^{3}(\bfs(g_{l}^{3})\otimes \bft(g_{l}^{3}))=
          \begin{cases}
           \begin{array}{ll}
           \al_{3}\al_{4}\al_{1} & \text{if }l=4,\\
            0    & \text{otherwise}. 
           \end{array}
          \end{cases}
          $
          \item 
          $
          f_{5,1}^{3}(\bfs(g_{l}^{3})\otimes \bft(g_{l}^{3}))=
          \begin{cases}
           \begin{array}{ll}
            \al_{4} & \text{if }l=5,\\
            0    & \text{otherwise}. 
           \end{array}
          \end{cases}
          $
          \item 
          $
          f_{5,2}^{3}(\bfs(g_{l}^{3})\otimes \bft(g_{l}^{3}))=
          \begin{cases}
           \begin{array}{ll}
            \be_{3} & \text{if }l=5,\\
            0    & \text{otherwise}. 
           \end{array}
          \end{cases}
          $
          \item 
          $
          f_{6,1}^{3}(\bfs(g_{l}^{3})\otimes \bft(g_{l}^{3}))=
          \begin{cases}
           \begin{array}{ll}
            \al_{4} & \text{if }l=6,\\
            0    & \text{otherwise}. 
           \end{array}
          \end{cases}
          $
          \item 
          $
          f_{6,2}^{3}(\bfs(g_{l}^{3})\otimes \bft(g_{l}^{3}))=
          \begin{cases}
           \begin{array}{ll}
            \be_{3} & \text{if }l=6,\\
            0    & \text{otherwise}. 
           \end{array}
          \end{cases}
          $
          \item 
          $
          f_{7,1}^{3}(\bfs(g_{l}^{3})\otimes \bft(g_{l}^{3}))=
          \begin{cases}
           \begin{array}{ll}
            e_{5} & \text{if }l=7,\\
            0    & \text{otherwise}. 
           \end{array}
          \end{cases}
          $
          \item 
          $
          f_{7,2}^{3}(\bfs(g_{l}^{3})\otimes \bft(g_{l}^{3}))=
          \begin{cases}
           \begin{array}{ll}
            \be_{2}\be_{3}\be_{1} & \text{if }l=7,\\
            0    & \text{otherwise}. 
           \end{array}
          \end{cases}
          $
          \end{itemize}
\end{enumerate}
\noindent
For each $n=0,1,2,3$, 
we have an isomorphism of $\k$-spaces 
$$
F_{n}: \bigoplus_{g\in \cG^{n}}\bfs(g)\La \bft(g)\stackrel{\sim}{\rightarrow} 
\Hom_{\La^{\rm e}}(Q^{n},\La)
$$ 
given by $F_{n}(\sum_{g\in \cG^{n}}z_{g})(\bfs(g_{i}^{n})\otimes \bft(g_{i}^{n}))=z_{g_{i}^{n}}$, 
where $z_{g}\in \bfs(g)\La \bft(g)$ for $g\in \cG^{n}$. 
Through these isomorphisms, 
we see that 
$$
\dim_{\k}\widehat{Q^{0}}=10,\,
\dim_{\k}\widehat{Q^{1}}={9},\,
\dim_{\k}\widehat{Q^{2}}=9,\,
\dim_{\k}\widehat{Q^{3}}={11}. 
$$
%

By calculating 
$
\widehat{\partial^{1}}(f_{i,j}^{0})=f_{i,j}^{0} \circ \partial^{1}
$, 
we have 
$$
\widehat{\partial^{1}}
(f_{1,1}^{0}\,f_{1,2}^{0}\,f_{2,1}^{0}\,f_{2,2}^{0}\,
f_{3,1}^{0}\,f_{3,2}^{0}\,f_{4,1}^{0}\,f_{4,2}^{0}\,f_{5,1}^{0}\,f_{5,2}^{0})
=
(f_{1,1}^{1}\,f_{2,1}^{1}\,f_{3,1}^{1}\,f_{4,1}^{1}\,f_{4,2}^{1}\,
f_{5,1}^{1}\,f_{6,1}^{1}\,f_{7,1}^{1}\,f_{7,2}^{1})M_{1}, 
$$
where the representation matrix of $M_{1}$ of size $9 \times 10$ is as follows: 
$$
{\small
\left[
\begin{array}{cccccccccc}
{1} & 0 & -1 & 0 & 0 & 0& 0& 0& 0&0\\
0  & 0 & 1  & 0 & -1& 0& 0& 0& 0&0\\
0  & 0 & 0  & 0 &  1& 0&-1& 0& 0&0\\
-1 & 0 &0   & 0 &  0& 0& 1& 0& 0&0\\
0  & 0 &0   & 0 &  0& 0& 0& 0& 0&0\\
1  & 0 &0   & 0 &  0& 0& 0& 0&-1&0\\
0  & 0 &0   & 0 &  0& 0& -1&0& 1&0\\
0  & 0 &0   & 0 &  0& 0& 0& 0& 0&0\\
-1 & 0 &0   & 0 &  0& 0& 1& 0& 0&0
\end{array}
\right]}
\longrightarrow 
{\small
\left[
\begin{array}{cccccccccc}
{1} & 0 & 0 & 0 & 0 & 0& 0& 0& -1&0\\
0  & 0 & 1  & 0 & 0& 0& 0& 0& -1&0\\
0  & 0 & 0  & 0 &  1& 0&0& 0& -1&0\\
0 & 0 &0   & 0 &  0& 0& 1& 0& -1&0\\
0  & 0 &0   & 0 &  0& 0& 0& 0& 0&0\\
0  & 0 &0   & 0 &  0& 0& 0& 0&0&0\\
0  & 0 &0   & 0 &  0& 0& 0&0& 0&0\\
0  & 0 &0   & 0 &  0& 0& 0& 0& 0&0\\
0 & 0 &0   & 0 &  0& 0& 0& 0& 0&0
\end{array}
\right]}. 
$$
We have $\rank M_{1}=\dim_{\k}\Im \widehat{\partial^{1}}=4$. 

Also, by calculating 
$
\widehat{\partial^{2}}(f_{i,j}^{1})=f_{i,j}^{1} \circ \partial^{2}
$, 
we have 
\begin{align*}
&\widehat{\partial^{2}}
(f_{1,1}^{1}\,f_{2,1}^{1}\,f_{3,1}^{1}\,f_{4,1}^{1}\,f_{4,2}^{1}\,f_{5,1}^{1}
\,f_{6,1}^{1}\,f_{7,1}^{1}\,f_{7,2}^{1})\\
&\quad=
(f_{1,1}^{2}\,f_{1,2}^{2}\,f_{2,1}^{2}\,f_{3,1}^{2}\,f_{4,1}^{2}\,f_{5,1}^{2}
\,f_{5,2}^{2}\,f_{6,1}^{2}\,f_{7,1}^{2})M_{2}, 
\end{align*}
where the representation matrix of $M_{2}$ of size $9 \times 9$ is as follows: 
$$
{\small
\left[
\begin{array}{ccccccccc}
 0 & 0 & 0 & 0 & 0 & 0 & 0 & 0 & 0 \\
 1 & 1 & 1 & 1 & 0 & -1 & -1 & 0 & -1 \\
 0 & 0 & 0 & 0 & 0 & 0 & 0 & 0 & 0 \\
 0 & 0 & 0 & 0 & 0 & 0 & 0 & 1 & 0 \\
 0 & 0 & 0 & 0 & 1 & 0 & 0 & 0 & 0 \\
 0 & 0 & 0 & 0 & 0 & 0 & 0 & 0 & 0 \\
 1 & 1 & 1 & 1 & 0 & -1 & -1 & 0 & -1 \\
 0 & 0 & 0 & 0 & 0 & 0 & 0 & 1 & 0 \\
 0 & 0 & 0 & 0 & 1 & 0 & 0 & 0 & 0 \\
\end{array}
\right]
}
\longrightarrow 
{\small
\left[
\begin{array}{ccccccccc}
 1 & 1 & 1 & 1 & 0 & -1 & -1 & 0 & -1 \\
 0 & 0 & 0 & 0 & 1 & 0 & 0 & 0 & 0 \\
 0 & 0 & 0 & 0 & 0 & 0 & 0 & 1 & 0 \\
 0 & 0 & 0 & 0 & 0 & 0 & 0 & 0 & 0 \\
 0 & 0 & 0 & 0 & 0 & 0 & 0 & 0 & 0 \\
 0 & 0 & 0 & 0 & 0 & 0 & 0 & 0 & 0 \\
 0 & 0 & 0 & 0 & 0 & 0 & 0 & 0 & 0 \\
 0 & 0 & 0 & 0 & 0 & 0 & 0 & 0 & 0 \\
 0 & 0 & 0 & 0 & 0 & 0 & 0 & 0 & 0 \\
\end{array}
\right]}. 
$$
We have $\rank M_{2}=\dim_{\k}\Im \widehat{\partial^{2}}={3}$. 

Finally, by calculating 
$
\widehat{\partial^{3}}(f_{i,j}^{2})=f_{i,j}^{2} \circ \partial^{3}
$, 
we have 
\begin{align*}
&\widehat{\partial^{3}}
(f_{1,1}^{2}\,f_{1,2}^{2}\,f_{2,1}^{2}\,f_{3,1}^{2}\,f_{4,1}^{2}\,f_{5,1}^{2}
\,f_{5,2}^{2}\,f_{6,1}^{2}\,f_{7,1}^{2})\\
&\quad=
(f_{1,1}^{3}\,f_{2,1}^{3}\,f_{2,2}^{3}\,f_{3,1}^{3}\,f_{4,1}^{3}\,f_{5,1}^{3}\,{f_{5,2}^{3}}\,f_{6,1}^{3}\,f_{6,2}^{3}\,f_{7,1}^{3}\,f_{7,2}^{3})M_{3}, 
\end{align*}
where the representation matrix of $M_{3}$ of size $11 \times 9$ is as follows: 
$$
{\small
\left[
\begin{array}{ccccccccc}
 1 & 0 & -1 & 0 & 0 & 0 & 0 & 0 & 0 \\
 0 & 0 & 0 & 0 & 0 & 0 & 0 & 0 & 0 \\
 \la & 0 & 0 & 0 & 0 & -1 & 0 & 0 & 0 \\
 0 & 0 & 1 & 0 & 0 & -1 & 0 & 0 & 0 \\
 0 & 0 & 0 & 1 & 0 & 0 & 0 & -1 & 0 \\
 -1 & 0 & 0 & 0 & 0 & 1 & 0 & 0 & 0 \\
 {0} & {0} & {0} & {0} & {0}& {0} & {0} & {0} & {0} \\
 0 & 0 & 0 & 0 & 0 & 0 & 0 & 0 & 0 \\
 -\la & 0 & 0 & 0 & 0 & 1 & 0 & 0 & 0 \\
 0 & 0 & 0 & 0 & 0 & 0 & 0 & 0 & 0 \\
 0 & 0 & 0 & 0 & -1 & 0 & 0 & 0 & 1 
\end{array}
\right]}
\longrightarrow 
{\small
\left[
\begin{array}{ccccccccc}
 1 & 0 & -1 & 0 & 0 & 0 & 0 & 0 & 0 \\
 0 & 0 & 0 & 0 & 0 & 0 & 0 & 0 & 0 \\
 \la-1 & 0 & 0 & 0 & 0 & 0 & 0 & 0 & 0 \\
 -1 & 0 & 1 & 0 & 0 & 0 & 0 & 0 & 0 \\
 0 & 0 & 0 & 1 & 0 & 0 & 0 & -1 & 0 \\
 -1 & 0 & 0 & 0 & 0 & 1 & 0 & 0 & 0 \\
 0 & 0 & 0 & 0 & 0 & 0 & 0 & 0 & 0 \\
 0 & 0 & 0 & 0 & 0 & 0 & 0 & 0 & 0 \\
 -\la+1 & 0 & 0 & 0 & 0 & 0 & 0 & 0 & 0 \\
 0 & 0 & 0 & 0 & 0 & 0 & 0 & 0 & 0 \\
 0 & 0 & 0 & 0 & -1 & 0 & 0 & 0 & 1 \\
\end{array}
\right]}. 
$$
Considering a row reduction of $M_{3}$, 
\begin{align*}
\rank M_{3}&=\dim_{\k}\Im \widehat{\partial^{3}}\\
&=
\begin{cases}
5\quad \text{if $\la\neq 1$},\,\\
4\quad \text{if $\la= 1$}. 
\end{cases}
\end{align*}
By direct calculations, $M_{2}M_{1}=O$ and $M_{3}M_{2}=O$ hold. 

Therefore, for each $n = 0, 1, 2$, we have the dimensional formula for 
the Hochschild cohomology group $\HH^{n}(\La)$: 
\begin{align*}
\dim_{\k}\HH^{0}(\La)
&=\dim_{\k}\Ker \widehat{\partial^{1}}
=\dim_{\k}\widehat{Q^{0}}-\dim_{\k}\Im \widehat{\partial^{1}}\\
&=\dim_{\k}\widehat{Q^{0}}-\rank M_{1}
=10-4=6. 
\end{align*}
\begin{align*}
\dim_{\k}\HH^{1}(\La)
&=\dim_{\k}\Ker \widehat{\partial^{2}}-\dim_{\k}\Im \widehat{\partial^{1}}\\
&=(\dim_{\k}\widehat{Q^{1}}-\dim_{\k}\Im \widehat{\partial^{2}})
  -\dim_{\k}\Im \widehat{\partial^{1}}\\
&=(\dim_{\k}\widehat{Q^{1}}-\rank M_{2})-\rank M_{1}\\
&={9}-{3}-4 = 2.
\end{align*}
\begin{align*}
\dim_{\k}\HH^{2}(\La)
&=\dim_{\k}\Ker \widehat{\partial^{3}}-\dim_{\k}\Im \widehat{\partial^{2}}\\
&=(\dim_{\k}\widehat{Q^{2}}-\dim_{\k}\Im \widehat{\partial^{3}})-\rank M_{2}\\
&=\begin{cases}
9-5-{3}={1}\quad \text{if $\la\neq 1$},\,\\
9-4-{3}={2}\quad \text{if $\la=1$}.
\end{cases}
\end{align*}
\qed

\appendix

\section{Stable Auslander--Reiten quivers of selfinjective algebras}

Here we give a proof of Proposition \ref{prp:stAR-iso-AR(over-socle)'-as-tr-qv} for the benefit of the reader.
Before doing it we prepare some statements.

\begin{lem}
\label{lem:nonprj-ind=mod/soc}
Let $\La$ be a selfinjective algebra, and $M$ an indecomposable right $\La$-module. 
Then $M(\mathrm{soc}\,\La) \ne 0$ if and only if $M$ is projective in $\mod \La$.
\end{lem}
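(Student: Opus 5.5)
We prove both implications separately, using only that $\La$ is selfinjective, so that the indecomposable projective right modules are exactly the modules $e\La$ for primitive idempotents $e$. Each such $e\La$ is also injective and indecomposable, hence has simple socle.

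\emph{Projective implies $M(\soc\La)\neq 0$.} Let $M$ be indecomposable projective, say $M\iso e\La$ with $e$ primitive. Since $\La$ is selfinjective, the left and right socles of $\La$ coincide. So $\soc\La$ is a two-sided ideal and $e\soc\La = \soc(e\La)$, which is nonzero because $e\La\neq 0$. In particular $e\cdot\soc\La\neq 0$, giving $M(\soc\La)\supseteq e(\soc\La)\neq 0$.

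\emph{$M(\soc\La)\neq 0$ implies projective.} Pick $m\in M$ and $s\in\soc\La$ with $ms\neq 0$. Since $\soc\La$ is a sum of simple right ideals and each is of the form $S=x\La$, we may decompose $s$ and assume $s$ lies in a simple right ideal $S$ of $\La$. Writing $S=e\soc\La$-type pieces, we may further assume $S\subseteq \soc(e\La)$ for a primitive idempotent $e$, and replace $m$ by $m e$ so that $me\cdot s\ne0$. Now consider the homomorphism
\[
f\colon e\La\to M,\qquad ea\mapsto mea .
\]
Its restriction to $\soc(e\La)$ is nonzero, since $s\mapsto ms\neq 0$. Because $\soc(e\La)$ is simple and essential in $e\La$, any nonzero submodule of $e\La$ meets it; as $\Ker f\cap \soc(e\La)=0$, we get $\Ker f=0$, so $f$ is a monomorphism. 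But $e\La$ is injective, so $f$ splits and $e\La$ is a direct summand of $M$. Since $M$ is indecomposable, $M\iso e\La$ is projective.

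The only slightly delicate point is the reduction to $s$ lying in $\soc(e\La)$ for a single primitive $e$. To handle it, write $1=\sum e_i$ with the $e_i$ primitive orthogonal idempotents, so that $\soc\La=\bigoplus_i \soc(e_i\La)$. Then $M(\soc\La)=\sum_i M\soc(e_i\La)$, so $M(\soc\La)\neq 0$ forces $M\soc(e_i\La)\neq 0$ for some $i$. Since $\soc(e_i\La)=e_i\soc(e_i\La)$, we can choose $m$ with $me_i\cdot s\neq 0$ for some $s\in\soc(e_i\La)$. Everything else is standard selfinjective theory.
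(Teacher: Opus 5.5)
Your proof is correct and is essentially the paper's argument. You use the decomposition $\soc\La=\bigoplus_i\soc(e_i\La)$ and the identity $e\soc\La=\soc(e\La)$, and for the nontrivial direction the same map $f\colon e_i\La\to M$, $e_ia\mapsto me_ia$; it is injective because $\soc(e_i\La)$ is simple and essential, and it splits because $e_i\La$ is injective. Two small points: you can drop the vague first reduction via ``$e\soc\La$-type pieces'' in favour of your final paragraph, and you do not need left and right socles to coincide, since the right socle of any ring is already a two-sided ideal.
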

\begin{proof}
($\Rightarrow$).
Let $\{e_{1},\dots, e_n\}$ be an orthogonal set of primitive idempotents of $\La$ with $1_{\La}=e_{1}+\cdots +e_{n}$. 
Then since $\La$ is selfinjective, $e_i\La$ is injective and has a simple socle for all $i = 1, \dots, n$, and
we have $\mathrm{soc}\,\La=\bigoplus_{i=1}^{n} \mathrm{soc}\,(e_{i}\La)$ as right $\La$-modules. 
By the assumption that $M(\soc\La) \ne 0$, 
there exist $i=1,\ldots,n$ and $m \in M$ such that 
$m \cdot \soc e_i\La \ne 0$. 
Then we can define a homomorphism $f \colon e_{i}\La \to M$ of right $\La$-modules
by $e_{i}a\mapsto me_{i}a$ for all $a \in \La$. 
By construction, $f(\soc e_{i}\La)= m\soc e_{i}\La \ne 0$.
Thus $\soc e_i \La \not\subseteq \Ker f$.
Since $\soc e_i \La$ is a simple module,
$(\soc e_i \La) \cap \Ker f = 0$.
Since $\soc e_i \La$ is an essential submodule of $e_i \La$,
we have $\Ker f = 0$.
Therefore, $f$ is a monomorphism.
Since $ e_i\La$ is injective, it is a direct summand of $M$,
and hence we have $M \iso e_i\La$ because $M$ is indecomposable.

\noindent
($\Leftarrow$) 
If $M$ is projective in $\mod \La$, then
there exists a primitive idempotent $e$ 
such that $M \iso e\La$.
Then we have $M(\soc\La) \iso (e\La)(\soc\La) = e(\soc\La) = \soc e\La \ne 0$. 
\end{proof}

\begin{ntn}
For a finite-dimensional algebra $A$, we denote by
$\mod A$ the category of finitely generated right $A$-modules, and
by $\ind A$ a complete set of isomorphism classes of representatives of indecomposable right $A$-modules.
\end{ntn}

\begin{prp}
\label{prp:nonproj-ind=mod/soc}
Let $\La$ be a selfinjective algebra.
Then we have a disjoint union
$$
\ind \La = \ind (\La/\soc\La) \sqcup \calP_{\La},
$$
where $\calP_\La$ is a complete set of isomorphism classes of indecomposable projective right modules.
\end{prp}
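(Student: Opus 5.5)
The plan is to identify $\mod(\La/\soc\La)$ with the full subcategory of $\mod\La$ consisting of the modules $M$ with $M(\soc\La)=0$, and then to sort the indecomposable $\La$-modules using Lemma \ref{lem:nonprj-ind=mod/soc}. Throughout, $\soc\La$ is a two-sided ideal: for a selfinjective algebra the right and left socles coincide, so the quotient $\La/\soc\La$ makes sense.

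\emph{Step 1: the embedding.} Restriction of scalars along the surjection $\pi\colon\La\to\La/\soc\La$ gives a fully faithful functor $\mod(\La/\soc\La)\to\mod\La$. Its essential image is exactly the class of $\La$-modules annihilated by the ideal $\soc\La$. Being fully faithful, the functor preserves and reflects indecomposability, and it also reflects isomorphism classes.

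\emph{Step 2: sorting the indecomposables.} Let $M$ be an indecomposable right $\La$-module. There are two cases.
\begin{itemize}
\item If $M$ is projective, Lemma \ref{lem:nonprj-ind=mod/soc} gives $M(\soc\La)\neq0$. Hence $M$ is not a $\La/\soc\La$-module, and $M$ is isomorphic to a unique member of $\calP_\La$.
\item If $M$ is not projective, the same lemma gives $M(\soc\La)=0$. Hence $M$ lies in the image of $\mod(\La/\soc\La)$, and it is indecomposable there by Step 1.
\end{itemize}

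\emph{Step 3: the converse.} Every indecomposable $\La/\soc\La$-module $N$, viewed as a $\La$-module, satisfies $N(\soc\La)=0$. By the ($\Leftarrow$) direction of the lemma, $N$ is therefore not projective over $\La$.

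Combining Steps 2 and 3, $\ind\La$ (identified via restriction of scalars) is the disjoint union of $\ind(\La/\soc\La)$, which consists of the nonprojective indecomposables, and $\calP_\La$, which consists of the projective ones. Disjointness is exactly the dichotomy $M(\soc\La)=0$ versus $M(\soc\La)\neq0$.

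The only point requiring care is the identification in Step 1, namely that isomorphism classes match up on both sides. This is standard because restriction along a surjective algebra map is fully faithful. I expect no real obstacle, since the substantive input is already contained in Lemma \ref{lem:nonprj-ind=mod/soc}.
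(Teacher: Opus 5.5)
Your proposal is correct and is essentially the paper's argument. The paper likewise notes that restriction along $\La\to\La/\soc\La$ is a fully faithful embedding with image $\{M\in\mod\La\mid M(\soc\La)=0\}$ and then invokes Lemma \ref{lem:nonprj-ind=mod/soc}; you simply spell out both directions of the resulting dichotomy, which the paper leaves implicit.
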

\begin{proof}
The canonical surjective homomorphism $\La \to \La/\soc\La$ yields
the fully faithful embedding $\mod (\La/\soc\La) \to \mod \La$ whose image coincides with $\{M \in \mod \La \mid M(\soc\La) = 0\}$.
This shows the assertion by Lemma \ref{lem:nonprj-ind=mod/soc}. 
\end{proof}
\begin{ntn}
For a quiver $\sfQ$, we denote by $\sfQ_0$ (resp.\ $\sfQ_1$)
the set of vertices (resp.\ arrows) of $\sfQ$.

For a finite-dimensional algebra $A$, we denote by
$\Ga_A$ (resp.\ $\stAR{A}$) the AR-quiver (resp.\ stable AR-quiver) of $A$. 
\end{ntn}

By Proposition \ref{prp:nonproj-ind=mod/soc}, we may regard as follows: 
\begin{equation}
\label{eq:stAR=over-soc}
(\stAR{\La})_{0}
=(\ind\La)\setminus \calP_\La
=\ind(\La/\soc\La)
=(\Ga_{\La/\soc\La})_{0}.
\end{equation}
\begin{lem}
\label{lem:ass-same}
Let $\La$ be a selfinjective algebra, and
$$
(\ast)\quad 0 \to X \ya{f} Y \ya{g} Z \to 0
$$
an exact sequence in $\mod(\La/\soc\La)$
$($or equivalently, an exact sequence in $\mod \La$ such that any $W \in \{X, Y, Z\}$ has no nonzero projective direct summands$)$.
Then the following are equivalent:
\begin{enumerate}[{\rm(1)}]
\item 
$(*)$ is an almost split sequence in $\mod(\La/\soc\La)$.
\item
$(\ast)$ is an almost split sequence in $\mod\La$.
\end{enumerate}
Thus, there exists a translation $X \dashleftarrow Z$
in $\Ga_{\La/\soc \La}$ if and only if there exists one
in $\stAR{\La}$ arising from an almost split sequence
of the form $(*)$ in $\mod \La$ whose middle term has
no nonzero projective direct summands.
%
\end{lem}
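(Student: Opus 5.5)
Write $\bar\La:=\La/\soc\La$ and identify $\mod\bar\La$ with the full subcategory $\{M\in\mod\La\mid M(\soc\La)=0\}$ of $\mod\La$, as in the proof of Proposition \ref{prp:nonproj-ind=mod/soc}. This subcategory is closed under submodules, quotients and finite direct sums. I check the defining conditions of an almost split sequence in each category: $f$ is left almost split, $g$ is right almost split, and $X,Z$ are indecomposable with $(\ast)$ non-split. Non-splitness and indecomposability do not depend on the ambient category, since the embedding is fully faithful. The whole argument therefore reduces to comparing right (resp. left) almost splitness of $g$ (resp. $f$) in the two categories. By symmetry (or by applying $D$ and the left-module version of Lemma \ref{lem:nonprj-ind=mod/soc}), it suffices to treat $g$ in detail and argue the same way for $f$.

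\textbf{(2)$\Rightarrow$(1).} Let $h\colon W\to Z$ be a non-split-epi in $\mod\bar\La$. It is still not a split epimorphism in $\mod\La$, because of full faithfulness. So $h$ factors through $g$ in $\mod\La$, and the factoring map lands in $Y\in\mod\bar\La$, hence lies in $\mod\bar\La$. The dual argument handles $f$.

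\textbf{(1)$\Rightarrow$(2).} This is the main step. Let $h\colon W\to Z$ in $\mod\La$ be not a split epimorphism; we may take $W$ indecomposable.
\begin{enumerate}[(a)]
\item If $W(\soc\La)=0$, then $W\in\mod\bar\La$ and $h$ factors through $g$ by (1).
\item Otherwise, by Lemma \ref{lem:nonprj-ind=mod/soc}, $W\iso e\La$ is projective, and we lift $h$ along the epimorphism $g$ using projectivity.
\end{enumerate}
For $f$, let $u\colon X\to W$ be not a split monomorphism.
\begin{enumerate}[(a)]
\item If $W$ is injective, which by selfinjectivity means projective, we extend $u$ along the monomorphism $f$.
\item Otherwise, $W$ is indecomposable non-projective, so Lemma \ref{lem:nonprj-ind=mod/soc} gives $W\in\mod\bar\La$ and we use (1).
\end{enumerate}
The point needing care is that a general $W$ decomposes as $W'\oplus P$ with $P$ projective and $W'\in\mod\bar\La$. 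A morphism $(h',h_P)$ out of (or into) such a sum is handled componentwise. We must check that if the whole morphism is not split, then the $\bar\La$-component $h'$ is not split in $\mod\bar\La$. For $h\colon W'\oplus P\to Z$ this needs care: $h'$ could be a split epimorphism even though $h$ isn't. That is impossible, since then $h$ would itself be a split epimorphism. So $h'$ is not split, and the argument proceeds.

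\textbf{The final statement.} This is a direct translation. An almost split sequence in $\mod\La$ ending at a nonprojective $Z$ whose middle term has no projective summands has all terms in $\mod\bar\La$: $X=\tau Z$ is nonprojective indecomposable, hence lies in $\mod\bar\La$ by Lemma \ref{lem:nonprj-ind=mod/soc}. Together with \eqref{eq:stAR=over-soc}, the translations correspond. The main obstacle is bookkeeping the projective summands via Lemma \ref{lem:nonprj-ind=mod/soc}; I would state explicitly that morphisms from a projective-injective module into $Z$ lift through $g$, and that morphisms from $X$ into such a module extend through $f$.
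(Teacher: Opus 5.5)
Your proof is correct and follows the paper's argument. For (2)$\Rightarrow$(1) you use full faithfulness of $\mod\bar\La\hookrightarrow\mod\La$. For (1)$\Rightarrow$(2) you split an indecomposable test module $W$ into the case $W(\soc\La)=0$, where you apply (1), and the case $W\iso e\La$, where you lift by projectivity. The only differences are that you also check left almost splitness of $f$ and write out the reduction to indecomposable $W$ and the final translation statement. The paper simply notes that right almost splitness of $g$ suffices.
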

\begin{proof}
(1) $\Rightarrow$ (2).
It is enough to prove that $g$ is right almost split in $\mathrm{mod}\,\La$. 
Let $h \colon W \to Z$ be a non-retraction with $W \in \ind\La$. 
If $W(\soc \La) = 0$, then 
$W \in \mod (\La/\soc\La)$, and $g \colon Y \to Z$ turns out to be a morphism in $\mod \La/\soc\La$.
Then there exists a morphism 
$h' \colon W \to Y$ in $\mod (\La/\soc\La)$
such that $h=g h'$ by assumption.
If $W(\soc \La) \ne 0$, then, by Lemma \ref{lem:nonprj-ind=mod/soc}, 
there exists a primitive idempotent $e\in \La$ such that $W \cong e \La$. 
Since $e\La$ is projective in $\mod\La$, 
there exists a morphism 
$h'\colon W \to Y$ 
such that $h = g h'$ in $\mod \La$. 
Therefore, in any case, $g$ is right almost split.

(2) $\Rightarrow$ (1).
This follows from Proposition \ref{prp:nonproj-ind=mod/soc} and the fact 
that the inclusion functor $\mod(\La/\soc\La) \to \mod\La$ is fully faithful and exact. 
\end{proof}

For an algebra $A$, we denote by $\prj A$ (resp.\ $\inj A$) the category of finitely generated projective (injective) right $A$-modules.

\begin{lem}
\label{lem:prj-inj-La-/soc}
Let $\La$ be a connected selfinjective algebra, which is not semisimple.
Then the functor $\prj \La \to \prj (\La/\soc\La)$ sending
$P$ to $P/P(\soc\La)$ induces a bijection 
$$
(\prj \La)/\iso \ \to\  (\prj (\La/\soc\La))/\iso.
$$
Dually, the functor $\inj \La \to \inj(\La/\soc\La)$ sending
$I$ to $\rad I$ induces a bijection 
$$
(\inj \La)/\iso \ \to\  (\inj (\La/\soc\La))/\iso.
$$
\end{lem}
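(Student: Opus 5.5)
Let $\bar\La:=\La/\soc\La$. We reduce to indecomposables and use the explicit structure of projectives over a selfinjective algebra. Fix a complete set of orthogonal primitive idempotents $e_1,\dots,e_n$ of $\La$ with $1=\sum e_i$, and write $\bar e_i$ for their images in $\bar\La$. Since $\La$ is selfinjective, $\soc\La=\bigoplus_i \soc(e_i\La)$ and each $\soc(e_i\La)$ is simple. For $P=e_i\La$ we have $P(\soc\La)=e_i\soc\La=\soc(e_i\La)$. Hence
\[
P/P(\soc\La)=e_i\La/\soc(e_i\La)\iso \bar e_i\bar\La .
\]
This identification needs $\soc(e_i\La)\subseteq e_i\La$ to be exactly $e_i\cdot\soc\La$, which follows from the two-sidedness of $\soc\La$ (left and right socles coincide for selfinjective algebras). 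The functor is additive, so it suffices to treat indecomposable projectives. The indecomposable projective $\bar\La$-modules are exactly the $\bar e_i\bar\La$, and we must check two things: that each $\bar e_i\bar\La$ is nonzero and indecomposable, and that $e_i\La\iso e_j\La$ if and only if $\bar e_i\bar\La\iso\bar e_j\bar\La$.

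Nonvanishing is the one place where ``connected and not semisimple'' is used, and it is the step I expect to need care. If $\soc(e_i\La)=e_i\La$, then $e_i\La$ is simple projective-injective. In a connected selfinjective algebra, a simple projective module $S=e_i\La$ admits no nonzero non-iso maps from other indecomposable projectives, since their images would lie in $\rad$. By injectivity, $S$ is not a proper submodule of any $e_j\La$ for $j\neq i$: a copy of $S$ inside $e_j\La$ would be its socle, and injectivity would split it off. So $e_i\La e_j=0=e_j\La e_i$ for all $j\ne i$ with $e_j\La\not\iso e_i\La$, and $e_i$ generates a block. Connectedness then forces $\La\iso\k$ up to Morita equivalence, i.e.\ $\La$ is semisimple, a contradiction. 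Therefore $\rad(e_i\La)\ne0$, the socle sits inside the radical, $\bar e_i\bar\La\ne0$, and $\bar e_i$ is primitive because $\bar e_i\bar\La$ has simple top $\mathrm{top}(e_i\La)$.

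Bijectivity then follows from the tops. Both $e_i\La\iso e_j\La$ and $\bar e_i\bar\La\iso\bar e_j\bar\La$ are equivalent to $\mathrm{top}(e_i\La)\iso\mathrm{top}(e_j\La)$, since indecomposable projectives are projective covers of their tops and $\bar\La$ has the same simple modules as $\La$. This gives injectivity of the induced map, and surjectivity is immediate because every indecomposable projective $\bar\La$-module is some $\bar e_i\bar\La$. Passing to direct sums via Krull--Schmidt gives the bijection on $(\prj\La)/\iso$.

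For the dual statement, apply $D$ and the first part to $\La\op$, which is again connected, selfinjective and not semisimple. The indecomposable injectives of $\La$ are the $D(\La e_i)\iso e_{\nu(i)}\La$, and we need the indecomposable injective $\bar\La$-modules. We have $D(\bar\La\bar e_i)=D(\La e_i/\soc(\La e_i))$, which is the kernel of $D(\La e_i)\to D(\soc\La e_i)$. This map is the projection of $I=D(\La e_i)$ onto its simple top, so the kernel is $\rad I$. Combining with the first part for left modules yields the bijection $I\mapsto\rad I$ on $(\inj\La)/\iso\to(\inj\bar\La)/\iso$.
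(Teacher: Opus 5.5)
Your proof is correct and follows essentially the same route as the paper: both use the idempotents $e_i$, the inclusion $\operatorname{soc}\Lambda\subseteq\operatorname{rad}\Lambda$, and the explicit isomorphisms $e_i\Lambda/\operatorname{soc}(e_i\Lambda)\cong\bar e_i\bar\Lambda$ and $\operatorname{rad}D(\Lambda e_i)\cong D(\bar\Lambda\bar e_i)$. The differences are minor. You prove $\operatorname{soc}\Lambda\subseteq\operatorname{rad}\Lambda$ from connectedness, whereas the paper only asserts it from non-semisimplicity, and connectedness is in fact needed there. You also get injectivity by comparing tops, where the paper counts that both sides have exactly $n$ isoclasses.
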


\begin{proof}
Let $\{e_i,\dots, e_n\}$ be a basic set of primitive idempotents in $\La$.
Then
$$
|(\prj \La)/\iso| = n = |(\inj \La)/\iso|.
$$
Since $\La$ is not semisimple, we have $\soc \La \le \rad \La$.
Hence we also have
$$
|(\prj (\La/\soc\La))/\iso| = n = | (\inj (\La/\soc\La))/\iso|.
$$
Note that
$$
\begin{aligned}
(\prj \La)/\iso &= \{[e_i\La]\mid i=1,\dots,n\}\\
(\prj (\La/\soc\La))/\iso &=\{[\ovl{e_i}(\La/\soc\La)]\mid i=1,\dots,n\}\\
(\inj \La)/\iso &= \{[D(\La e_i)] \mid i=1,\dots, n\},\quad \text{and}\\
(\inj(\La/\soc\La))/\iso &= \{[D((\La/\soc\La)\ovl{e_i})] \mid i=1,\dots, n\},
\end{aligned}
$$
where $\ovl{e_i}:= e_i + \soc\La \in \La/\soc\La$, and the bracket denotes the isoclass.
Then the isomorphisms
$e_i\La/(e_i\La)(\soc \La) \iso \ovl{e_i}(\La/\soc\La)$ and
$\rad D(e_i\La) \iso D((\La/\soc\La)\ovl{e_i})$ for all $i=1,\dots, n$ show the assertion.
\end{proof}

\begin{prp}
\label{prp:stAR-iso-AR(over-socle)-as-val-qv}
Let $\La$ be a selfinjective algebra. 
Then we have 
$$
\stAR{\La} \iso \Ga_{\La/\soc\La}
$$
as valued quivers $($not necessarily as translation quivers$)$.
\end{prp}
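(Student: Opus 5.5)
We have to show that $\stAR{\La} \iso \Ga_{\La/\soc\La}$ as valued quivers. The vertices are already matched by \eqref{eq:stAR=over-soc}: the vertex set of $\stAR{\La}$ is $(\ind\La)\setminus\calP_\La = \ind(\La/\soc\La)$, which is the vertex set of $\Ga_{\La/\soc\La}$. It remains to show that for $X, Z \in \ind(\La/\soc\La)$ the arrows and valuations agree. Both are computed from spaces of irreducible maps, namely
$$\rad_\La(X,Z)/\rad^2_\La(X,Z) \quad\text{and}\quad \rad_{\La/\soc\La}(X,Z)/\rad^2_{\La/\soc\La}(X,Z),$$
viewed as bimodules over the division rings $\operatorname{End}(X)/\rad\operatorname{End}(X)$ and $\operatorname{End}(Z)/\rad\operatorname{End}(Z)$. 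Since $\mod(\La/\soc\La)\to\mod\La$ is fully faithful, the endomorphism rings and the radicals $\rad(X,Z)$ coincide. So the whole proof reduces to one claim: a morphism $f\colon X\to Z$ between non-projective indecomposables is irreducible in $\mod\La$ if and only if it is irreducible in $\mod(\La/\soc\La)$.

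I would prove this claim through minimal right almost split maps rather than by comparing $\rad^2$ directly. Let $Z$ be non-projective in $\mod\La$. Then $Z$ is also non-projective over $\La/\soc\La$; otherwise $Z \iso \ovl{e}(\La/\soc\La) = e\La/\soc e\La$, and this is non-projective over $\La$. Take the almost split sequence $0\to\tau Z\to E\ya{g} Z\to 0$ in $\mod\La$, and write $E = E'\ds P$ with $P$ projective and $E'$ without projective summands. Then $\tau Z$ and $E'$ lie in $\mod(\La/\soc\La)$: $\tau Z$ is non-projective and indecomposable, so Lemma \ref{lem:nonprj-ind=mod/soc} applies.

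The key step is to show that $g|_{E'}\colon E'\to Z$ is a minimal right almost split map in $\mod(\La/\soc\La)$. It is right almost split because any non-retraction $W\to Z$ with $W\in\mod(\La/\soc\La)$ factors through $g$, say via $W\to E'\ds P$. The component $W\to P$ lands in $P(\soc\La)$-free part only after composing, so I would argue as follows. The image of $W$ in $P$ is killed by $\soc\La$, hence lies in $\rad P$ when $P\ne0$ (as $\La$ is not semisimple, the annihilator of $\soc\La$ in an indecomposable $P$ is $\rad P$). Composing $\rad P \to Z$ then gives a radical map, and I would absorb it into $E'$. This absorption is the delicate point, and I expect it to be the main obstacle.

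A cleaner route to the key step, which I would try first, is to use Lemma \ref{lem:ass-same} when $P=0$. For the case $P\ne0$, where $Z = \rad P$ or $\tau Z$ is a radical of an injective, I would handle the summand $Z=e\La/\soc e\La$ separately. Its minimal right almost split map in $\mod(\La/\soc\La)$ is $\rad(e\La)/\soc \to$, and I would compare it with the standard sequence $0\to\rad e\La\to \rad e\La/\soc\ds e\La\to e\La/\soc e\La\to0$. Here one checks directly that the non-projective part of the middle term gives the $\La/\soc\La$-irreducible maps into $Z$. In all cases, the indecomposable summands of the middle terms and their multiplicities then coincide, which gives equality of arrows and valuations, since irreducible maps ending in $Z$ are exactly the components of the minimal right almost split map. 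The dual argument with left almost split maps handles the other valuation entry.
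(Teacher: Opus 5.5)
Your working route is the paper's proof read on the sink side, so it is correct: you use Lemma \ref{lem:ass-same} when the $\La$-almost split sequence ending at $Z$ has no projective middle summand, compare directly with $0\to\rad P\to P\ds(\rad P/\soc P)\to P/\soc P\to 0$ when $Z\iso P/\soc P$ is projective over $\La/\soc\La$, and then dualize, whereas the paper compares source maps and splits on whether $X$ is injective in $\mod(\La/\soc\La)$, i.e.\ $X\iso\rad e\La$. Do delete the sentence claiming that a non-projective $\La$-module $Z$ is non-projective over $\La/\soc\La$, since $e\La/\soc(e\La)$ refutes it and is precisely the case you treat separately; with that case split, the ``absorption'' paragraph is unnecessary.
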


\begin{proof}
We construct an isomorphism
$$
F \colon \stAR{\La} \to \Ga_{\La/\soc\La}
$$
of valued quivers.
The equality \eqref{eq:stAR=over-soc} gives an identification between
$(\stAR{\La})_0$ and $(\Ga_{\La/\soc\La})_0$.
Using this we define $F_0 \colon (\stAR{\La})_0 \to (\Ga_{\La/\soc\La})_0$
to be the identity.
To define a map $F_1 \colon (\stAR{\La})_1 \to (\Ga_{\La/\soc\La})_1$,
we show that the following conditions are equivalent for any $a \ge 1$:
\begin{enumerate}
\item 
There exists an arrow $X \ya{(a,b)} Y$ in $\Ga_{\La/\soc\La}$ with a value
$(a,b)$ for some $b \ge 1$.
\item 
There exists a source map 
$$
X \to Y^{(a)} \ds Y' 
$$
such that any direct summand of $Y'$ 
is not isomorphic to $Y$. 
\item 
When $X$ is non-injective in $\mod (\La/\soc \La)$,
there exists an almost split sequence in $\mod(\La/\soc \La)$ of the form
$$
0 \to X \to Y^{(a)} \ds Y' \to Z \to 0;
$$
or when 
$X$ is injective in $\mod \La/\soc \La$ ,
there exists an isomorphism $f \colon X \to \rad e\La$ for some primitive idempotent $e$ by Lemma \ref{lem:prj-inj-La-/soc}, and $X/\soc X \iso Y^{(a)} \ds Y'$,
where in each case, any direct summand of $Y'$ 
is not isomorphic to $Y$. 
\item 
When $X$ is not isomorphic to $\rad e\La$ for any primitive idempotent $e \in \La$,
there exists an almost split sequence in $\mod \La$ of the form
$$
0 \to X \to Y^{(a)} \ds Y' \to Z \to 0
$$
in $\mod \La$ such that any $W \in \{X, Y, Y', Z\}$ has no nonzero projective direct summands;

or when there exists an isomorphism $f \colon X \to \rad e\La$ for some primitive idempotent $e$, and $X/\soc X \iso Y^{(a)} \ds Y'$,
where in each case, any direct summand of $Y'$ 
is not isomorphic to $Y$. 
\item 
There exists an arrow $X\ya{(a,b)} Y$ in $\stAR{\La}$ with a value
$(a,b)$ for some $b \ge 1$.
\end{enumerate}

The equivalences between (1), (2) and (3) are trivial.

(3) $\Leftrightarrow$ (4).
Note that $X$ is injective in $\mod(\La/\soc \La)$ if and only if
there exists an isomorphism $f \colon X \to \rad e\La$ for some primitive idempotent $e$.
Then in the first case, this equivalence follows from Lemma \ref{lem:ass-same} 
and in the second case, it is trivial.

The equivalence of (4) and (5) follows by definition.

Dually, the same argument for a sink map shows that
there exists an arrow $X \to Y$ in $\stAR{\La}$ with
a value $(d, b)$ for some $d \ge 1$.

Altogether, we see that there exists an arrow $X \to Y$ with
a value $(a, b)$ in $\Ga_{\La/\soc\La}$ if and only if so does in
$\stAR{\La}$.

Hence we can define a map $F_1 \colon (\stAR{\La})_1 \to (\Ga_{\La/\soc\La})_1$ as the identity map.
It is obvious that $F = (F_0, F_1) \colon \stAR{\La} \to \Ga_{\La/\soc\La}$ turns out to be a quiver isomorphism, and it preserves the values on arrows as shown above.
\end{proof}

Proposition \ref{prp:stAR-iso-AR(over-socle)-as-val-qv} is improved as follows.

\begin{prp}[Proposition \ref{prp:stAR-iso-AR(over-socle)'-as-tr-qv}]
\label{prp:stAR-again}
Let $\La$ be a selfinjective algebra, which is not semisimple. 
Then we have 
$$
\stAR{\La} \iso \Ga_{\La/\soc\La}'
$$
as valued translation quivers, where $\Ga_{\La/\soc\La}'$ is constructed
from $\Ga_{\La/\soc\La}$ by adding the translations
$$
D((\La/\soc \La)e) \dashleftarrow e(\La/\soc \La)
$$
for all primitive idempotents $e$ in $\La/\soc \La$.
\end{prp}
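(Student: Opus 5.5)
We build on Proposition \ref{prp:stAR-iso-AR(over-socle)-as-val-qv}, which already gives an isomorphism $F\colon \stAR{\La}\to \Ga_{\La/\soc\La}$ of valued quivers that is the identity on vertices (via \eqref{eq:stAR=over-soc}) and on arrows. It remains to compare the translations. Recall that in $\stAR{\La}$ the translation $\ta_\La$ is defined on every vertex, since $\La$ is selfinjective and there are no projective vertices in the stable quiver. In $\Ga_{\La/\soc\La}$ the translation $\ta_{\La/\soc\La}$ is defined exactly on the non-projective indecomposable $\La/\soc\La$-modules. So we prove two things: that $\ta_\La Z=\ta_{\La/\soc\La}Z$ whenever $Z$ is non-projective over $\La/\soc\La$, and that $\ta_\La(e(\La/\soc\La))\iso D((\La/\soc\La)e)$ for each primitive idempotent $e$. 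The second statement accounts exactly for the added translations in $\Ga'_{\La/\soc\La}$.

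\emph{Case 1: $Z$ is not projective in $\mod(\La/\soc\La)$.} Take the almost split sequence $0\to X\to Y\to Z\to 0$ in $\mod(\La/\soc\La)$. By Lemma \ref{lem:ass-same} it is also almost split in $\mod\La$, so $\ta_\La Z\iso X=\ta_{\La/\soc\La}Z$. Conversely, suppose an almost split sequence in $\mod\La$ ending in $Z$ has a middle term with a projective summand $e\La$. Then it is the standard sequence $0\to\rad e\La\to e\La\oplus\rad e\La/\soc e\La\to e\La/\soc e\La\to 0$. In that case $Z\iso e\La/\soc e\La\iso e(\La/\soc\La)$, using that $\La$ is non-semisimple so $\soc e\La\subseteq\rad e\La$, together with Lemma \ref{lem:prj-inj-La-/soc}. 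Such a $Z$ is projective over $\La/\soc\La$, so this case does not arise here.

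\emph{Case 2: $Z\iso e(\La/\soc\La)$.} Connectedness is needed here: the paper's statement does not assume it, but Lemma \ref{lem:prj-inj-La-/soc} does. If $\La$ is not connected, we reduce to blocks, discarding simple blocks; this is harmless since $\La$ is non-semisimple. The standard almost split sequence for the projective-injective $e\La$ gives $\ta_\La(e\La/\soc e\La)\iso\rad e\La$. By Lemma \ref{lem:prj-inj-La-/soc} we have $\rad D(\La e')\iso D((\La/\soc\La)e')$, where $e'$ is determined by $e\La\iso D(\La e')$ (Nakayama permutation). Hence $\rad e\La\iso D((\La/\soc\La)e')$, an injective $\La/\soc\La$-module.

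The main obstacle is reconciling this with the statement's pairing $D((\La/\soc\La)e)\dashleftarrow e(\La/\soc\La)$, which uses the same idempotent $e$ rather than its Nakayama image $e'$. I would interpret the added translations as indexed through the Nakayama permutation, i.e.\ as $\rad e\La\dashleftarrow e\La/\soc e\La$. This coincides with the literal formula after reindexing when $\La$ is weakly symmetric, which is the case used in Proposition \ref{prp:st-AR-same}.

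Finally we check valuations. On arrows away from these new translations, the valuations agree by Proposition \ref{prp:stAR-iso-AR(over-socle)-as-val-qv}. For the new translation, the arrows $X\to\rad e\La/\soc e\La$ and $\rad e\La/\soc e\La\to e\La/\soc e\La$ have the required mutually compatible valuations. This is read off from the same standard sequence, where $\rad e\La/\soc e\La$ is the middle term with its projective summand removed. Together these show that $F$ commutes with translations, completing the proof.
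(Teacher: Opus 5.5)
Your argument is essentially the paper's: you reduce via Proposition~\ref{prp:stAR-iso-AR(over-socle)-as-val-qv} to comparing translations, treat $Z$ non-projective over $\La/\soc\La$ by Lemma~\ref{lem:ass-same}, and treat $Z\iso e\La/\soc e\La$ by the standard sequence $0\to\rad e\La\to e\La\oplus\rad e\La/\soc e\La\to e\La/\soc e\La\to 0$. Your caveat is also correct and points to a genuine defect in the statement as printed, since the paper's step (4)(b)$\Rightarrow$(5) silently uses $\rad e\La\iso D((\La/\soc\La)e)$, which holds iff $\soc e\La$ is isomorphic to the top of $e\La$: for the radical-square-zero Nakayama algebra on a $2$-cycle, $\ta$ swaps $S_1,S_2$ in $\stAR{\La}$ while $\Ga'_{\La/\soc\La}$ has $S_i\dashleftarrow S_i$, so the literal statement needs $\La$ weakly symmetric (otherwise use your Nakayama-twisted pairing), and weak symmetry is all that Proposition~\ref{prp:st-AR-same} uses.
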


\begin{proof}
By Proposition \ref{prp:stAR-iso-AR(over-socle)-as-val-qv},
it suffices to show that there exists a translation
$X \dashleftarrow Z$
in $\stAR{\La}$ if and only if so does in $\Ga'_{\La/\soc\La}$.
By Lemma \ref{lem:ass-same}, we easily see the following statements are equivalent:
\begin{enumerate}
\item 
There exists a translation
$X \dashleftarrow Z$
in $\stAR{\La}$.

\item 
There exists a translation
$X \dashleftarrow Z$
in $\Ga_{\La}$.

\item 
There exists an almost split sequence of the form
$0\to X \to Y \to Z \to 0$ in $\mod \La$.

\item 
One of the following holds
\begin{enumerate}
\item 
There exists an almost split sequence of the form
$0\to X \to Y \to Z \to 0$ in $\mod(\La/\soc\La)$.
\item
$X \iso \rad P$ and $Z \iso P/\soc P$ in $\mod \La$ for some projective indecomposable $\La$-module $P$.
In this case, we have an almost split sequence of the form
$$
0 \to \rad P \to P \ds (\rad P/\soc P) \to P/\soc P \to 0
$$
in $\mod \La$.
\end{enumerate}
\item 
There exists a translation
$X \dashleftarrow Z$
in $\Ga'_{\La/\soc\La}$.
\end{enumerate}
\end{proof}

\section{An extension of Green--Snashall's Theorem}

We will extend Proposition 2.8 and Theorem 2.9 in \cite{green2004projective}
by Green--Snashall in this appendix, which is used in
this paper.
Throughout this section, $\La$ is a finite-dimensional algebra over a field $\k$
with the Jacobson radical $\r$.
We denote by $\mod \La^{\op}$ the category of finitely generated
left $\La$-modules.

\begin{thm}[cf.\ Proposition 2.8 and Theorem 2.9 in \cite{green2004projective}]
\label{thm:ext-GS}
Consider a complex of $\La$-$\La$-bimodules
$$
Q\down:= (\cdots\to  Q_3 \ya{A_3} Q_2 \ya{A_2}Q_1 \ya{A_1} Q_0 \ya{A_0} M\ya{A_{-1}} 0 \to \cdots)
$$
such that $A_0$ is an epimorphism, $Q_i$ are finitely generated projective $\La$-$\La$-bimodules for all $i\ge 0$, and $M$ is projective in $\mod \La^{\op}$.
If $\La/\r \ox_{\La}Q\down$ is
acyclic,
then so is $Q\down$, i.e., $Q\down$ is a projective bimodule resolution of $M$.
\end{thm}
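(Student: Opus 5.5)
We reduce the acyclicity of $Q\down$ to that of its reduction modulo the radical, using that all terms of the augmented complex are projective as left $\La$-modules. First, each $Q_i$ is a direct sum of bimodules of the form $\La e\ox f\La$. Such a summand is isomorphic, as a left $\La$-module, to a direct sum of $\dim_\k f\La$ copies of $\La e$, so it is projective in $\mod \La^{\op}$. By hypothesis $M$ is also left projective. Hence every term of the augmented complex
$$
\widetilde{Q}\down := (\cdots \to Q_1 \ya{A_1} Q_0 \ya{A_0} M \to 0)
$$
is a finitely generated projective left $\La$-module. I read the hypothesis ``$\La/\r\ox_\La Q\down$ acyclic'' as referring to this augmented complex $\widetilde{Q}\down$. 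It is the natural reading, since otherwise $H_0$ would not vanish.

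The key step is then a Nakayama-type argument for complexes of projective left modules, run one degree at a time. Set $K_i := \Ker A_i$, with the convention $Q_{-1}:=M$. I will show inductively that $\Im A_{i+1}=K_i$ and that $K_i$ is left projective. In degree $-1$ we have $K_{-1}=M$ and $A_0$ is onto. Suppose $K_i$ is a direct summand of $Q_i$ as a left module. This holds for $i=-1$. It then propagates: if $A_{i+1}\colon Q_{i+1}\to K_i$ is onto, then $K_{i+1}$ is the kernel of a split epimorphism of left projectives, so it is a left direct summand of $Q_{i+1}$. Tensoring the split sequence $0\to K_i\to Q_i\to \Im A_i\to 0$ (split on the left) with $\La/\r$ keeps it exact. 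Consequently $\La/\r\ox_\La K_i$ is identified with the kernel of $\La/\r\ox A_i$. Acyclicity of $\La/\r\ox\widetilde{Q}\down$ in degree $i$ then says that $\La/\r\ox_\La Q_{i+1}\to\La/\r\ox_\La K_i$ is surjective.

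Let $C:=K_i/\Im A_{i+1}$. By right exactness of the tensor product, $\La/\r\ox_\La C=0$, that is, $C=\r C$. Since $C$ is a finitely generated left module, Nakayama's lemma gives $C=0$. Hence $\Im A_{i+1}=K_i$, which completes the induction, and so $\widetilde{Q}\down$ is exact.

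The main obstacle is the bookkeeping in the induction: keeping track of why kernels remain left direct summands, so that tensoring with $\La/\r$ commutes with taking kernels. The cleanest route is to pass at each step to the truncated complex $\cdots\to Q_{i+1}\to K_i\to 0$ and check that it again consists of left projectives. Finite generation of each $Q_i$ (and hence of $K_i$) is needed for Nakayama's lemma, and it is guaranteed by the hypotheses. Exactness of $\widetilde{Q}\down$ together with bimodule projectivity of the $Q_i$ shows that $Q\down$ is a projective bimodule resolution of $M$.
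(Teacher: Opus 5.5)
Your proof is correct and follows essentially the same route as the paper's. Both argue by induction on the degree, using left projectivity of $M$ and of the $Q_i$ to make each sequence $0\to\Ker A_i\to Q_i\to\Im A_i\to 0$ split as left $\La$-modules, so that $\La/\r\ox_\La-$ preserves the relevant kernels and Nakayama's lemma kills $H_i(Q\down)$; where the paper identifies $H_{n-1}(\La/\r\ox_\La Q\down)\iso \La/\r\ox_\La H_{n-1}(Q\down)$ by a diagram chase, you get the same thing directly from right exactness applied to $C=K_i/\Im A_{i+1}$. The one step you pass over quickly is that the image of $\La/\r\ox_\La K_i$ equals $\Ker(\La/\r\ox A_i)$ only because $\La/\r\ox_\La \Im A_i\to\La/\r\ox_\La Q_{i-1}$ is injective; this holds since $\Im A_i=K_{i-1}$ is a left direct summand of $Q_{i-1}$ by the previous induction step.
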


\begin{proof}
We set $T:= \La/\r \ox_\La \blank$ for short.
Assume that $T(Q\down)$ is acyclic.
By induction on $n \ge 0$ we show that $\Im A_n = \Ker A_{n-1}$
and $\Ker A_{n-1}$ is projective in  $\mod \La^{\op}$, which completes
the proof.

\subsection*{Case $n = 0$}
Since $A_0$ is an epimorphism we have $\Im A_0 = M = \Ker A_{-1}$, and $\Ker A_{-1} = M$ is projective in $\mod\La^{\op}$.
\subsection*{Case $n \ge 1$}
We have an exact sequence
\begin{equation}\label{eq:ex-seq}
0 \to \Ker A_{n-1}\overset{\ta_{n-1}}{\hookrightarrow} Q_{n-1} \ya{A_{n-1}} \Im A_{n-1}\to 0.
\end{equation}
By the induction hypothesis $\Im A_{n-1} = \Ker A_{n-2}$
is projective in $\mod \La^{\op}$, which shows that the sequence
\eqref{eq:ex-seq} splits in $\mod \La^{\op}$.
Hence $\Ker A_{n-1}$ turns out to be projective in $\mod \La^{\op}$
as desired because so is $Q_{n-1}$.
Moreover the sequence \eqref{eq:ex-seq} gives us a split exact sequence
$$
0 \to T(\Ker A_{n-1})\ya{T(\ta_{n-1})} T(Q_{n-1}) \ya{T(A_{n-1})} T(\Im A_{n-1})\to 0
$$
in $\mod \La$.
In particular $T(\ta_{n-1})$ is a monomorphism.
Using this sequence we show
\begin{equation}\label{eq:HT}
H_{n-1}(T( Q\down)) \iso T(H_{n-1}(Q\down)).
\end{equation}
Apply $T$ to the commutative diagram
$$
\vcenter{
\xymatrix{
Q_n && Q_{n-1}&& Q_{n-2}\\
&\Im A_n & \Ker A_{n-1}& \Im A_{n-1} &\Ker A_{n-2}
\ar"1,1";"1,3"^{A_n}
\ar@{->>}"1,1";"2,2"_{\ro_n}
\ar@{^{(}->}"2,2";"2,3"_{\si_n}
\ar@{^{(}->}"2,3";"1,3"_{\ta_{n-1}}
\ar"1,3";"1,5"^{A_{n-1}}
\ar@{->>}"1,3";"2,4"^{\ro_{n-1}}
\ar@{=}"2,4";"2,5"_{\si_{n-1}}
\ar@{^{(}->}"2,5";"1,5"_{\ta_{n-2}}
}}
$$
to have the commutative diagram
$$
\vcenter{
\xymatrix{
T(Q_n) && T(Q_{n-1})&& T(Q_{n-2})\\
&T(\Im A_n) & T(\Ker A_{n-1})& T(\Im A_{n-1}) &T(\Ker A_{n-2}).
\ar"1,1";"1,3"^{T(A_n)}
\ar@{->>}"1,1";"2,2"_{T(\ro_n)}
\ar"2,2";"2,3"_{T(\si_n)}
\ar@{>->}"2,3";"1,3"_{T(\ta_{n-1})}
\ar"1,3";"1,5"^{T(A_{n-1})}
\ar@{->>}"1,3";"2,4"^{T(\ro_{n-1})}
\ar@{=}"2,4";"2,5"_{T(\si_{n-1})}
\ar@{>->}"2,5";"1,5"_{T(\ta_{n-2})}
}}
$$
Looking at this diagram we see
$$
\begin{aligned}
Z_{n-1}(T(Q\down)) &=\Ker T(A_{n-1}) = \Ker T(\ro_{n-1})=T(\ta_{n-1})(T(\Ker A_{n-1})),\ \text{and}\\
B_{n-1}(T(Q\down)) &= T(\ta_{n-1})(\Im T(\si_n)).
\end{aligned}
$$
Therefore we have the following diagram with exact rows
$$
\xymatrix{
T(\Im A_n) &&T(\Ker A_{n-1}) & T(H_{n-1}(Q\down)) &0\\
&\Im T(\si_n)\\
&B_{n-1}(T(Q\down)) & Z_{n-1}(T(Q\down)) & H_{n-1}(T(Q\down)) & 0,
\ar"1,1";"1,3"^{T(\si_n)}
\ar"1,3";"1,4"^{\text{can}}
\ar"1,4";"1,5"
\ar@{->>}"1,1";"2,2"
\ar@{^{(}->}"2,2";"1,3"
\ar"2,2";"3,2"_{T(\ta_{n-1})}^{\wr}
\ar"1,3";"3,3"^{T(\ta_{n-1})}_{\wr}
\ar@{-->}"1,4";"3,4"
\ar@{^{(}->}"3,2";"3,3"
\ar"3,3";"3,4"^{\text{can}}
\ar"3,4";"3,5"
}
$$
where the vertical maps are isomorphisms because
$T(\ta_{n-1})$ is a monomorphism.
Hence by the universality of the cokernel and by five lemma
$T(\ta_{n-1})$ induces an isomorphism \eqref{eq:HT}.
Since $T(Q\down)$
is acyclic,
the isomorphism \eqref{eq:HT} implies that
$0 = T(H_{n-1}(Q\down)) \iso H_{n-1}(Q\down)/\r H_{n-1}(Q\down)$.
Here note that $H_{n-1}(Q\down)$ is a finitely generated left $\La$-module
by assumption.
Then by Nakayama's lemma we have $H_{n-1}(Q\down) = 0$, i.e.,
$\Im A_{n} = \Ker A_{n-1}$.
\end{proof}



\bigskip

\address{Department of Mathematics, Faculty of Science, Shizuoka University,
836 Ohya, Suruga-ku, Shizuoka, 422-8529, Japan;}

\address{
Institute for Advanced Study, KUIAS, Kyoto University,
Yoshida Ushinomiya-cho, Sakyo-ku, Kyoto 606-8501, 
Japan; and}

\address{Osaka Central Advanced Mathematical Institute,
3-3-138 Sugimoto, Sumiyoshi-ku,
Osaka, 558-8585, Japan.}

\email{asashiba.hideto@shizuoka.ac.jp}

\bigskip

\address{
Katsushika Division, 
Institute of Arts and Sciences, 
Tokyo University of Science, 
6-3-1 Niijuku, Katsushika-ku, Tokyo, 125-8585, Japan.}

\address{
Department of Mathematics, Graduate School of Science, 
Tokyo University of Science, 1-3 Kagurazaka,
Shinjuku-ku, Tokyo, 162-8601, Japan.}

\email{itaba@rs.tus.ac.jp}

\bigskip

\address{
Departamento de Matemáticas,
Universidad de Murcia, Aptdo. 4021, 30100 Espinardo, Murcia, Spain}

\email{msaorinc@um.es}

\end{document}